\documentclass[a4paper,reqno]{amsart}
\usepackage{geometry}
\geometry{a4paper,left=2.6cm,right=2.6cm,top=3.2cm,bottom=3.0cm}

\usepackage{amsfonts}
\usepackage{amsmath,amssymb,amsthm,amsxtra}
\usepackage{float}
\usepackage[colorlinks, linkcolor=blue!72,anchorcolor=orange,
    citecolor=blue,urlcolor=Emerald, bookmarksopen,bookmarksdepth=2]{hyperref}
\usepackage[usenames,dvipsnames]{xcolor}
\usepackage{enumitem}
\usepackage{geometry,array}
\usepackage{graphicx}
\usepackage{subfigure}
\usepackage{bookmark}
\usepackage{tikz}
\usepackage{url}

\usetikzlibrary{matrix,positioning,decorations.markings,arrows,decorations.pathmorphing,	
    backgrounds,fit,positioning,shapes.symbols,chains,shadings,fadings,calc}
\tikzset{->-/.style={decoration={  markings,  mark=at position #1 with
    {\arrow{>}}},postaction={decorate}}}
\tikzset{-<-/.style={decoration={  markings,  mark=at position #1 with
    {\arrow{<}}},postaction={decorate}}}

\usepackage[all]{xy}

\def\del{\delta}

\usepackage{color}
\def\red{\color{red}}
\numberwithin{equation}{section}

\theoremstyle{plain}

\newtheorem{thm}{Theorem}[section]
\newtheorem{cor}[thm]{Corollary}
\newtheorem{lem}[thm]{Lemma}
\newtheorem{prop}[thm]{Proposition}

\theoremstyle{definition}
\newtheorem{defn}[thm]{Definition}
\newtheorem{exm}[thm]{Example}
\newtheorem{rem}[thm]{Remark}

\newdir{ >}{{}*!/-5pt/\dir{>}}

\newcommand{\Hom}{\operatorname{Hom}\nolimits}

\newcommand{\Ext}{\operatorname{Ext}\nolimits}

\newcommand{\proj}{\operatorname{proj}\nolimits}

\newcommand{\pd}{\operatorname{pd}\nolimits}
\newcommand{\id}{\operatorname{id}\nolimits}
\newcommand{\Id}{\operatorname{Id}\nolimits}

\newcommand{\gl}{\operatorname{gl.dim}\nolimits}

\newcommand{\sil}{\operatorname{silt}\nolimits}
\newcommand{\til}{\operatorname{tilt}\nolimits}
\renewcommand{\sup}{\operatorname{sup}\nolimits}

\renewcommand{\mod}{\mathsf{mod}\hspace{.01in}}
\newcommand{\Cone}{\operatorname{Cone}\nolimits}
\newcommand{\CoCone}{\operatorname{CoCone}\nolimits}

\newcommand{\B}{\mathcal B}

\newcommand{\oB}{\overline{\B}}
\newcommand{\U}{\mathcal U}
\newcommand{\V}{\mathcal V}

\newcommand{\W}{\mathcal W}

\newcommand{\s}{\mathcal S}
\newcommand{\T}{\mathcal T}

\newcommand{\D}{\mathcal D}

\newcommand{\R}{\mathcal R}

\newcommand{\Z}{\mathcal Z}
\newcommand{\C}{\mathcal C}

\newcommand{\EE}{\mathbb E}

\newcommand{\svecv}[2]{\left(\begin{smallmatrix}
      #1 \\
      #2
    \end{smallmatrix}\right)}

\newcommand{\svech}[2]{\left(\begin{smallmatrix}
      #1 & #2
\end{smallmatrix}\right)}


\renewcommand{\emph}{\textit}
\renewcommand{\phi}{\varphi}

\newcommand{\add}{\mathsf{add}\hspace{.01in}}
\newcommand{\thick}{\mathsf{thick}\hspace{.01in}}

\begin{document}

\title{Silting reduction in extriangulated categories}

\thanks{Yu Liu is supported by the National Natural Science Foundation of China (Grant No. 11901479). Panyue Zhou is supported by the National Natural Science Foundation of China (Grant No. 11901190) and by the Scientific Research Fund of Hunan Provincial Education Department (Grant No. 19B239). Yu Zhou is supported by National Natural Science Foundation of China (Grant No. 11801297) and by Tsinghua University Initiative Scientific Research Program (2019Z07L01006). Yu Zhou and Bin Zhu are supported by National Natural Science Foundation of China (Grant No. 12031007).}

\author{Yu Liu, Panyue Zhou, Yu Zhou and Bin Zhu}
\address{School of Mathematics, Southwest Jiaotong University, 610031 Chengdu, Sichuan, China}
\email{liuyu86@swjtu.edu.cn}
\address{College of Mathematics, Hunan Institute of Science and Technology, 414006 Yueyang, Hunan, China}
\email{panyuezhou@163.com}
\address{Yau Mathematical Science Center, Tsinghua University, Beijing 10084, China}
\email{yuzhoumath@gmail.com}
\address{Department of Mathematical Sciences, Tsinghua University, Beijing 10084, China}
\email{zhu-b@mail.tsinghua.edu.cn}

\begin{abstract}
 Presilting and silting subcategories in extriangulated categories were introduced by Adachi and Tsukamoto recently. In this paper, we prove that the Gabriel-Zisman localization $\mathcal B/(\thick\mathcal W)$ of an extriangulated category $\B$ with respect to a presilting subcategory $\mathcal W$ satisfying certain condition can be realized as a subfactor category of $\mathcal B$. This generalizes the result by Iyama-Yang for silting reduction on triangulated categories. Then we discuss the relation between silting subcategories and tilting subcategories in extriangulated categories, this gives us a kind of important examples of our results. In particular, for a finite dimensional Gorenstein algebra, we get the relative version of the description of the singularity category due to Happel and Chen-Zhang by this reduction.
\end{abstract}
\keywords{extriangulated categories; silting subcategories; silting reduction; Gorenstein categories}
\subjclass[2020]{18G80; 18E10}
\maketitle

\section{Introduction}
The concept of \emph{silting}  appeared for the first time in a paper of Keller-Vossieck \cite{KV}. Nowadays, silting theory and silting reduction play important roles in the research of triangulated categories, especially derived categories. In \cite{KY}, it is shown that silting objects have correspondence with many important structures such as $t$-structures, co-$t$-structures, simple-minded collections. Silting reduction is shown to have a close relation with Calabi-Yau reduction \cite{IY,IY1} and is widely used in the representation theory. An overview of recent developments in silting theory  can be found in \cite{A}. Motivated by these fruitful results, we are wondering if we can study such important theory under a more general setting.

Extriangulated category was introduced by Nakaoka and Palu \cite{NP} by extracting the relevant properties of ${\rm \Ext}^1$ on exact categories and triangulated categories.  In particular, triangulated categories and exact categories are extriangulated categories. An important source of examples of extriangulated categories  which are neither triangulated nor exact is extension closed subcategories of triangulated categories. To find more examples of extriangulated categories, see \cite{NP,ZZ,HZZ,ZhZ,NP1}.

Adachi and Tsukamoto \cite{AT} introduced the notion of presilting and silting subcategories (see Definition \ref{presil} and Definition \ref{def:sil}) in extriangulated categories as a generalization of presilting and silting subcategories in triangulated categories. Note that in module categories, this notion is different from the concept of silting module in the sense of Angeleri H\"{u}gel, Marks and Vit\'{o}ria \cite{AMV}.

Throughout this paper, $k$ denotes a field and $\B$ denotes a Krull-Schmidt, Hom-finite, $k$-linear extriangulated category with enough projectives and enough injectives and satisfies \cite[Condition 5.8]{NP} (see condition {\bf (WIC)} in page 4).  Cotorsion pairs in extriangulated categories were introduced by Nakaoka and Palu \cite{NP} as a generalization of those in triangulated categories and exact categories.  We recall its definition.
\begin{defn}
Let $\U,\V$ be a pair of full subcategories of $\B$ which are closed under direct sums and direct summands. $(\U,\V)$ are called a cotorsion pairs if it satisfies the following conditions.
\begin{itemize}
\item[(a)] $\EE(\U,\V)=0$.
\item[(b)] For any object $B\in \B$, there exist two $\EE$-triangles
$$V_B\to U_B\to B\dashrightarrow, \quad B\to V^B \to U^B\dashrightarrow$$
where $U_B,U^B\in \U$ and $V_B,V^B\in \V$.
\end{itemize}
\end{defn}

Let $\W$ be a presilting subcategory of $\B$ which satisfies the following condition (the readers can find the definitions of the notions in Definition \ref{Cone} and Page 5):
\begin{description}
\item[(CP)] $(\W^{\vee},\W^{\bot})$ and $({^{\bot}}\W,\W^{\wedge})$ are cotorsion pairs in $\B$.
\end{description}
We show that $\Z:={^{\bot}}\W \cap\W^{\bot}$ is a Frobenius extriangulated category where $\W$ is the subcategory of projective-injective objects (see Lemma~\ref{lem:Fro}). Hence by \cite[Theorem 3.13]{ZZ}, there is an induced triangulated structure on $\Z/[\W]$. One the other hand, let $\thick \W$ be the thick subcategory of $\B$ generated by $\W$, and $\R$ be the class of morphisms in $\B$ defined as following:
$$\R=\{f:A\to B\mid\text{ there exists an } \EE\mbox{-triangle} \text{ } A\xrightarrow{f} B\to M\dashrightarrow \text{ with } M\in \thick \W\}.$$
Assume that $\B$ is skeletally small. We show that there is a natural triangulated structure on the Gabriel-Zisman localization $\mathcal B/(\thick\mathcal W)$ of $\B$ with respect to $\R$. The main result of this paper is the following.

\begin{thm}[{Theorems~\ref{main5} and \ref{main6}, and Corollary~\ref{cor:corr}}]\label{main1.7}
Let $\W$ be a presilting subcategory of $\B$ which satisfies condition {\bf (CP)}. There is a triangle equivalence
$$\B/(\thick \W)\simeq \Z/[\W].$$
Moreover, this equivalence induces a one-to-one correspondence between the silting subcategories in $\B/(\thick\W)$ and the silting subcategories in $\B$ which contain $\W$.
\end{thm}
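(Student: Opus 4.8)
The plan is to realize the equivalence through the evident functor and then read off the silting correspondence from it. Write $L\colon\B\to\B/(\thick\W)$ for the localization functor. Every $M\in\thick\W$ is a zero object of $\B/(\thick\W)$: applying $L$ to the split $\EE$-triangle $0\to M\xrightarrow{\,\id\,}M\dashrightarrow$ exhibits $0\to M$ as a morphism in $\R$, hence invertible in $\B/(\thick\W)$. In particular objects of $\W$ and morphisms factoring through $\W$ are killed, so the composite $\Z\hookrightarrow\B\xrightarrow{L}\B/(\thick\W)$ induces a functor $\bar F\colon\Z/[\W]\to\B/(\thick\W)$. The triangulated structure on $\Z/[\W]$ coming from Lemma~\ref{lem:Fro} and \cite[Theorem 3.13]{ZZ} has as triangles the images of $\EE$-triangles with all terms in $\Z$ and as shift the $\W$-cosyzygy inside $\Z$; since $L$ carries $\EE$-triangles of $\B$ to triangles of $\B/(\thick\W)$, the functor $\bar F$ is a triangle functor, the only point to check being compatibility of the two shifts, which holds because a $\W$-injective $\EE$-triangle $Z\to W\to\Sigma_\W Z\dashrightarrow$ in $\Z$ has $W\in\W$ killed by $L$, forcing $L(\Sigma_\W Z)\cong(LZ)[1]$.

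\emph{Density of $\bar F$.} Given $B\in\B$, the cotorsion pair $(\W^{\vee},\W^{\bot})$ supplies an $\EE$-triangle $B\to V\to U\dashrightarrow$ with $V\in\W^{\bot}$ and $U\in\W^{\vee}\subseteq\thick\W$, whence $LB\cong LV$. Next the cotorsion pair $({^{\bot}}\W,\W^{\wedge})$ supplies an $\EE$-triangle $Y\to Z\to V\dashrightarrow$ with $Z\in{^{\bot}}\W$ and $Y\in\W^{\wedge}\subseteq\thick\W$, whence $LV\cong LZ$. Finally, applying $\EE^{\ast}(\W,-)$ to this last $\EE$-triangle and using $\EE^{\ge1}(\W,V)=0$ (as $V\in\W^{\bot}$) together with $\EE^{\ge1}(\W,\W^{\wedge})=0$ (dimension-shifting along the defining $\W$-resolutions of objects of $\W^{\wedge}$, using $\EE^{\ge1}(\W,\W)=0$) forces $\EE^{\ge1}(\W,Z)=0$, i.e.\ $Z\in\W^{\bot}$. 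Hence $Z\in{^{\bot}}\W\cap\W^{\bot}=\Z$ and $\bar F(Z)\cong B$.

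\emph{Full faithfulness of $\bar F$.} This is the technical core, and I would follow the pattern of Iyama--Yang \cite{IY}. One shows that morphisms of $\B/(\thick\W)$ between objects of $\Z$ are computed by roofs with respect to $\R$, that for $X,Y\in\Z$ every such roof is equivalent to a genuine morphism $X\to Y$ of $\B$, and that two genuine morphisms represent the same morphism of $\B/(\thick\W)$ exactly when their difference factors through $\W$; this identifies $\Hom_{\B/(\thick\W)}(LX,LY)$ with $\uHom_\Z(X,Y)=\Hom_{\Z/[\W]}(X,Y)$ and so proves $\bar F$ fully faithful. The inputs are: (i) every object of $\thick\W$ admits, relative to $\Z$, finite $\W$-resolutions on both sides, obtained by iterating the approximation $\EE$-triangles of {\bf (CP)}; and (ii) $\Z={^{\bot}}\W\cap\W^{\bot}$ is left- and right-orthogonal to $\W$ in every positive $\EE$-degree, so these resolutions are acyclic for the relevant $\Hom$- and $\EE$-functors and thus pin down the localized Hom-groups. \emph{This is the step I expect to be the main obstacle}: unlike in \cite{IY} there is no shift functor and no rotation of triangles available, so the roof calculus and the orthogonality/resolution estimates must be run directly with $\EE$-triangles --- exactly the point at which {\bf (WIC)} (existence of the needed cones and cocones) and {\bf (CP)} become indispensable. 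Together with density this gives the triangle equivalence $\B/(\thick\W)\simeq\Z/[\W]$.

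\emph{The silting correspondence.} Since $\bar F$ is a triangle equivalence, silting subcategories of $\B/(\thick\W)$ correspond to silting subcategories of $\Z/[\W]$, and it remains to match the latter with the silting subcategories of $\B$ that contain $\W$. If $\s$ is presilting with $\W\subseteq\s$ then $\EE^{\ge1}(\s,\W)=0=\EE^{\ge1}(\W,\s)$, so $\s\subseteq{^{\bot}}\W\cap\W^{\bot}=\Z$. The key point is that for $X,Y\in\Z$ one has a natural isomorphism $\Hom_{\Z/[\W]}(X,Y[i])\cong\EE^{i}(X,Y)$ for all $i\ge1$: since $\Z\subseteq{^{\bot}}\W$, a $\W$-injective coresolution of $Y$ inside $\Z$ is acyclic for $\EE^{\ast}(X,-)$, so dimension-shifting identifies $\uHom_\Z(X,\Sigma_\W^{i}Y)$ with $\EE^{i}(X,Y)$. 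Consequently, for an additive subcategory $\s$ with $\W\subseteq\s\subseteq\Z$, $\s$ is presilting in $\B$ if and only if $\s/[\W]$ is presilting in $\Z/[\W]$; and $\thick_\B\s=\B$ if and only if $\thick_{\Z/[\W]}(\s/[\W])=\Z/[\W]$, because $\thick\W\subseteq\thick_\B\s$, so $\thick_\B\s$ is all of $\B$ precisely when its image in $\B/(\thick\W)\simeq\Z/[\W]$ is everything. Hence $\s\mapsto\s/[\W]$ is a well-defined map from silting subcategories of $\B$ containing $\W$ to silting subcategories of $\Z/[\W]$, with inverse sending a silting subcategory $\T$ of $\Z/[\W]$ to its full preimage in $\Z$, which contains $\W$ and is additive and closed under direct summands by the Krull--Schmidt property. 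Transporting along $\bar F$ then produces the asserted bijection and completes Theorem~\ref{main1.7}.
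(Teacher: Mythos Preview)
Your overall architecture is sound, and your density argument and the $\EE^{i}$-vs-$\Hom(-,-[i])$ formula for the silting correspondence match the paper's Lemma~\ref{lem:formula} and Lemma~\ref{psilz}. But there are two genuine problems in the core step.

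\textbf{No roof calculus is available.} You propose to compute $\Hom_{\B/(\thick\W)}(LX,LY)$ for $X,Y\in\Z$ via roofs with respect to $\R$. In a triangulated category this is Verdier localization and roofs are legitimate, but here $\B$ is only extriangulated and there is no reason for $\R$ to admit a calculus of (left or right) fractions; morphisms in the Gabriel--Zisman localization are a priori only zigzags. The paper does not attempt to establish a fraction calculus at all. Instead it reverses the direction: it builds an explicit functor $G\colon\B\to\Z/[\W]$ by sending each $B$ to the object $Z_B$ obtained from the two approximation $\EE$-triangles, proves in Proposition~\ref{im} (the technical heart) that $G$ inverts every morphism in $\R$, and then gets $H\colon\B/(\thick\W)\to\Z/[\W]$ by the universal property. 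Faithfulness and fullness of $H$ are then checked by decomposing an arbitrary zigzag in $\B/(\thick\W)$ into pieces each of which is controlled by the explicit isomorphisms produced by $G$ (Lemma~\ref{lem}). Your $\bar F$ is the paper's $F$, but the paper only shows $F$ is an equivalence \emph{after} constructing $H$ and proving $HF=\Id$ (Proposition~\ref{quasi}).

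\textbf{The triangulated structure on $\B/(\thick\W)$ is not given in advance.} You assert that ``$L$ carries $\EE$-triangles of $\B$ to triangles of $\B/(\thick\W)$'' in order to make $\bar F$ a triangle functor. But the triangulated structure on $\B/(\thick\W)$ is precisely what Theorem~\ref{main6} establishes, and it is \emph{defined} through the already-proved equivalence $H$: the shift is $\underline{[1]}:=F\circ\langle1\rangle\circ H$, and the triangles are declared to be the sequences isomorphic to images of $\EE$-triangles (Definition~\ref{tri}); Proposition~\ref{induce1} is needed to see this is well defined. So in the paper's logic the plain equivalence comes first, and only then the triangulated structure; your argument is circular on this point.

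For the silting correspondence, your ``$\thick_\B\s=\B$ iff its image generates $\B/(\thick\W)$'' is plausible once the triangle equivalence is in hand, but note that the reverse implication requires lifting thick-generation back to $\B$, which is not formal; the paper handles both directions by comparing silting in $\B$ and in the Frobenius subcategory $\Z$ directly (Proposition~\ref{silz}, via the two technical Lemmas preceding it), without passing through the localization.
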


This silting reduction generalizes \cite[Theorem 1.1]{IY}.

In extriangulated categories, tilting subcategories were introduced by Zhu and Zhuang \cite{ZhZ} as a generalization of $n$-tilting modules \cite{M} (and dually there is the notion of cotilting subcategories). Note that in triangulated categories, this notion is different from the concept of tilting subcategory since  the only projective objects in a triangulated category are zeros.  In this article, we give a slightly different definition of tilting subcategory (compared with \cite{ZhZ}, see Definition~\ref{deftil}) and  study the relation between silting subcategories and tilting subcategories.

\begin{thm}[{Theorem~\ref{main4}}]\label{thm:main1.1}
The following statements hold.
\begin{itemize}
\item[\rm (1)] If any object $B\in \B$ has finite projective dimension, then any tilting subcategory is silting.
\item[\rm (2)] If the projective dimensions of objects in $\B$ have a common upper bound, then any silting subcategory is also tilting.
\item[\rm (3)] If there is an object $B\in B$ having infinite projective dimension, then any silting (resp. tilting) subcategory is not tilting (resp. silting).
\end{itemize}
\end{thm}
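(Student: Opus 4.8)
The plan is to dispose of (1) and (3) by direct manipulation of thick closures and projective dimension, and to reduce (2) to a cotorsion-pair statement. Throughout I use freely the long exact sequences of the bifunctors $\EE^{i}$ on $\B$ (available since $\B$ has enough projectives and injectives) and the elementary observation that ``the projective dimensions of objects of $\B$ have a common upper bound $n$'' is equivalent to $\EE^{i}(-,-)=0$ for all $i>n$, hence also forces $\id B\le n$ for every $B\in\B$. For (1): a tilting subcategory $\T$ is in particular self-orthogonal, i.e.\ $\EE^{i}(\T,\T)=0$ for all $i\ge1$, so it is presilting; it remains to prove $\thick\T=\B$. Each projective $P$ admits, by the tilting axiom, a finite $\add\T$-coresolution, and the $\EE$-triangles comprising it place $P$ in $\thick\T$; thus $\proj\B\subseteq\thick\T$. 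For arbitrary $B\in\B$, finiteness of $\pd B$ gives a finite projective resolution whose $\EE$-triangles place $B$ in $\thick(\proj\B)\subseteq\thick\T$. Hence $\thick\T=\B$ and $\T$ is silting.

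For (3): let $\mathcal{P}$ be the full subcategory of objects of finite projective dimension. It is closed under direct summands, and the $\EE^{i}$-long exact sequence of an $\EE$-triangle $X\to Y\to Z\dashrightarrow$ shows that if two of $X,Y,Z$ lie in $\mathcal{P}$ then so does the third (the projective dimension of the remaining term is bounded by $1$ plus the maximum of the other two); therefore $\mathcal{P}$ is a thick subcategory of $\B$. Now if a subcategory $\W$ were simultaneously silting and tilting, then $\thick\W=\B$ (silting) while $\W\subseteq\mathcal{P}$ (tilting), whence $\B=\thick\W\subseteq\mathcal{P}$, contradicting the hypothesis that some object of $\B$ has infinite projective dimension. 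So no subcategory is at once silting and tilting, which is exactly what (3) asserts.

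For (2): let $\W$ be silting and assume $\pd B\le n$ for all $B$. The two parts of the definition of a tilting subcategory that involve $\W$ alone — $\EE^{i}(\W,\W)=0$ for $i\ge1$, and $\pd W\le n<\infty$ for $W\in\W$ — hold immediately, so the whole content is to show that every projective $P$ has a finite $\add\W$-coresolution, i.e.\ that $\proj\B\subseteq\W^{\vee}$. I would deduce this from the assertion that $(\W^{\vee},\W^{\bot})$ is a cotorsion pair in $\B$: granting that, apply it to $P$ to obtain an $\EE$-triangle $V_{P}\to U_{P}\xrightarrow{g}P\dashrightarrow$ with $U_{P}\in\W^{\vee}$ and $V_{P}\in\W^{\bot}$; the deflation $g$ splits because $P$ is projective, so $P$ is a direct summand of $U_{P}\in\W^{\vee}$, and since $\W^{\vee}$ is closed under direct summands, $P\in\W^{\vee}$, which is the required coresolution. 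Thus everything rests on establishing the cotorsion pair $(\W^{\vee},\W^{\bot})$. The easy half is the vanishing $\EE(\W^{\vee},\W^{\bot})=0$, a routine dimension-shift along a finite $\add\W$-coresolution using that $\EE^{i}(\W,Y)=0$ for $i\ge1$ and $Y\in\W^{\bot}$ by definition. The substantive half — the existence, for each $B\in\B$, of approximation $\EE$-triangles $V_{B}\to U_{B}\to B\dashrightarrow$ and $B\to V^{B}\to U^{B}\dashrightarrow$ with the $U$'s in $\W^{\vee}$ and the $V$'s in $\W^{\bot}$ — is precisely condition {\bf (CP)} for $\W$, which I expect to be available here from a preliminary lemma to the effect that any presilting subcategory whose objects have projective dimension bounded by a common integer automatically satisfies {\bf (CP)} (this is also what makes silting subcategories of a category of finite global dimension an instance of the setup of Theorem~\ref{main1.7}).

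The main obstacle is thus the construction of these $\add\W$-approximation triangles, i.e.\ the proof of condition {\bf (CP)} for $\W$. This is where the hypothesis of a \emph{common} bound $n$ on projective dimensions — as opposed to the merely objectwise finiteness used in (1) — is genuinely needed: it is exactly what forces the iterated passage to $\W$-(co)syzygies to terminate after at most $n$ steps (the $n$-th $\W$-cosyzygy of any object lands in $\add\W$, using $\id B\le n$ as well), so that the resulting resolutions are finite and the approximations exist. A secondary point to be checked along the way is that the relevant right and left $\W$-approximations exist at all, since a silting subcategory need not be obviously functorially finite; here one uses Hom-finiteness, the Krull--Schmidt property, and the fact that $\thick\W=\B$. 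Once {\bf (CP)} is in hand, parts (1) and (3) and the splitting step in (2) are entirely formal.
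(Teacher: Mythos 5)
Parts (1) and (3) are correct and follow essentially the same route as the paper (showing $\thick\T=\B$ via $\mathcal P\subseteq\thick\T$ and $\thick\mathcal P=\B$, and observing that the full subcategory of finite-projective-dimension objects is thick and proper, respectively).

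Part (2), however, has a genuine gap that you flag yourself but do not close. The crux of your argument is that $(\W^{\vee},\W^{\bot})$ be a cotorsion pair, and you propose to get this from an unproved ``preliminary lemma'' asserting that any \emph{presilting} subcategory with uniformly bounded projective dimension satisfies (CP). No such lemma is available, and trying to prove it is unnecessary work: you have overlooked that $\W$ is assumed \emph{silting}, not merely presilting, and the paper's Theorem~\ref{main2} (quoting Adachi--Tsukamoto) already says a presilting $\s$ is silting if and only if $(\s^{\vee},\s^{\wedge})$ is a cotorsion pair. So for a silting $\s$ the cotorsion pair comes for free; this, not bounded projective dimension, is the correct source of the approximation triangles. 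The bounded-dimension hypothesis is then used for a different purpose, namely to prove the \emph{quantitative} condition (P3), $\mathcal P\subseteq\T^{\vee}_{n}$, by a dimension shift along the coresolution of $P$: your splitting trick only shows $P\in\W^{\vee}$, i.e.\ $P\in\W^{\vee}_{k}$ for some unspecified $k$, but (P3) requires $k\le n$, and proving the $n$-th $\W$-cosyzygy of $P$ lies in $\W$ (as the paper does in the (b)$\Rightarrow$(c) step of Proposition~\ref{main3}, using $R_n\in\T^{\vee}\cap\T^{\perp}=\T$) is precisely the step where $\pd\B\le n$ enters. So the two ingredients you identify are right, but you've assigned the wrong job to each of them; reorganized along the lines of the paper, the argument is short and does not need any new approximation lemma.
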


A class of presilting subcategories satisfying condition (CP) are contained in the following result, which generalizes a result of Happel-Unger \cite[Lemma 1.3]{HU} and a result of Xie-Zan-Zhang \cite[Theorem 2.6]{XZZ}.

\begin{prop}[Theorem~\ref{main4} and Lemma~\ref{lem:eq}]\label{prop1}
If $\B$ is an abelian category, then the following statements are equivalent:
\begin{itemize}
\item[\rm (1)] $\B$ is Gorenstein;
\item[\rm (2)] any tilting subcategory is cotilting;
\item[\rm (3)] any cotilting subcategory is tilting;
\item[\rm (4)] $\B$ has a tilting-cotilting subcategory.
\end{itemize}
Under one of these equivalent conditions, any tilting subcategory of $\B$ satisfies condition $\operatorname{(CP)}$.
\end{prop}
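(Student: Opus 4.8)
The plan is to deduce everything from two ``universal'' examples together with the homological algebra of Gorenstein abelian categories. Observe first that $\proj\B$ is always a tilting subcategory and $\inj\B$ always a cotilting subcategory (take the split (co)resolutions), and that, unwinding the definitions, ``$\B$ is Gorenstein'' says exactly ``$\proj\B$ is cotilting'' --- the uniform bound on the injective dimensions of projective objects is the cotilting dimension bound, and the requirement that every injective admit a finite $\add(\proj\B)$-resolution is precisely that every injective object have finite projective dimension --- and, dually, exactly ``$\inj\B$ is tilting''. With this dictionary, $(2)\Rightarrow(1)$ follows by applying $(2)$ to the tilting subcategory $\proj\B$, and $(3)\Rightarrow(1)$ by applying $(3)$ to $\inj\B$; granting $(1)\Rightarrow(2)$ below, $(1)\Rightarrow(4)$ holds because $\proj\B$ is then at once tilting and cotilting; and $(4)\Rightarrow(1)$ is a routine dimension count --- for a tilting-cotilting subcategory $\M$, a bounded $\add\M$-coresolution $0\to P\to M^0\to\cdots\to M^r\to 0$ of a projective $P$ together with the bound on $\id(\M)$ forces $\id P$ bounded, so $\id(\proj\B)<\infty$, and dually $\pd(\inj\B)<\infty$. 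It thus remains to prove $(1)\Rightarrow(2)$ (equivalently $(1)\Rightarrow(3)$, by duality) and the final assertion.

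For $(1)\Rightarrow(2)$, let $\B$ be Gorenstein, let $g<\infty$ be a common bound for the injective dimensions of the projective objects, and let $\T$ be tilting; I verify the cotilting axioms. Self-orthogonality is part of being tilting. For $T\in\T$, dimension shifting along a bounded projective resolution of $T$, using $\id(\proj\B)\le g$, gives $\id T\le g$, so the objects of $\T$ have uniformly bounded injective dimension. It remains to show that every injective object $I$ has a finite $\add\T$-resolution. Now $I$ lies in $\T^{\bot}$ and, by the Gorenstein hypothesis, has finite projective dimension; taking a projective $P$ with $P\twoheadrightarrow I$ and embedding $P$ into an object of $\add\T$ (possible by the tilting condition), one extends $P\twoheadrightarrow I$, by injectivity of $I$, to a surjection onto $I$ from that object of $\add\T$, and iterating with $\add\T$-precovers produces kernels $I\leftarrow T_0\leftarrow T_1\leftarrow\cdots$, $T_j\in\add\T$. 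The precover property keeps every kernel in $\T^{\bot}$; dimension shifting against $\T$, which has $\id\T\le g$, shows that after $g$ steps the kernel lies also in ${}^{\bot}\T$; and the kernels stay of finite projective dimension. Hence this $g$-th kernel belongs to $\mathcal F\cap\T^{\bot}\cap{}^{\bot}\T$, where $\mathcal F$ is the subcategory of objects of finite projective dimension. The structural point is that for a tilting subcategory one has $\mathcal F\cap\T^{\bot}\cap{}^{\bot}\T=\add\T$ (this follows from the tilting equivalence, under which $\add\T$ corresponds to the projectives); so the iteration terminates after at most $g+1$ steps and yields the required finite $\add\T$-resolution of $I$. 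The implication $(1)\Rightarrow(3)$ is the dual.

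For the final assertion, let $\T$ be tilting and $\B$ Gorenstein; one must check that $(\T^{\vee},\T^{\bot})$ and $({}^{\bot}\T,\T^{\wedge})$ are cotorsion pairs. The $\EE$-orthogonality $\EE(\T^{\vee},\T^{\bot})=0=\EE({}^{\bot}\T,\T^{\wedge})$ reduces, by dimension shifting along the finite (proper) $\add\T$-(co)resolutions defining $\T^{\vee}$ and $\T^{\wedge}$, to the self-orthogonality of $\T$ and the defining $\Ext$-vanishing of $\T^{\bot}$ and ${}^{\bot}\T$. For the approximation $\EE$-triangles one invokes the homological structure of the Gorenstein category $\B$: its subcategory $\mathcal F$ of objects of finite projective (equivalently, injective) dimension fits into two complete cotorsion pairs of the Gorenstein structure, one with the Gorenstein-projective objects and one with the Gorenstein-injective objects, and one matches the two pairs of condition $\operatorname{(CP)}$ for $\T$ with these --- identifying ${}^{\bot}\T$, $\T^{\bot}$ and the pertinent $\T$-resolution classes with $\mathcal F$ and the Gorenstein (co)projectives --- so that the needed special precovers and preenvelopes are provided by completeness of the Gorenstein cotorsion pairs.

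The genuinely delicate part, which I expect to be the main obstacle, consists of the two structural identifications used above: the description $\add\T=\mathcal F\cap\T^{\bot}\cap{}^{\bot}\T$ in $(1)\Rightarrow(2)$, and the matching of ${}^{\bot}\T$, $\T^{\bot}$ and the $\T$-resolution classes with the Gorenstein-homological data in the verification of $\operatorname{(CP)}$. Both require careful bookkeeping with the one-sided $\Ext$-orthogonal classes attached to a tilting subcategory, with the behaviour of resolution dimensions over a Gorenstein category, and with the tilting equivalence; this is exactly the homological input isolated in Lemma~\ref{lem:eq}, and the reductions above are arranged to make it applicable.
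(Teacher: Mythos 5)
Your skeleton for the four equivalences is sound, and your shortcut $(2)\Rightarrow(1)$, $(3)\Rightarrow(1)$ through the dictionary ``$\B$ Gorenstein $\Leftrightarrow$ $\mathcal P$ cotilting $\Leftrightarrow$ $\mathcal I$ tilting'' is a legitimate compression of the paper's $(2)\Rightarrow(4)\Rightarrow(1)$, and your $(4)\Rightarrow(1)$ dimension count matches the paper. But there are two genuine gaps, precisely in the places you yourself flag as delicate.

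First, in $(1)\Rightarrow(2)$ your construction of a finite $\add\T$-coresolution of an injective $I$ by iterated $\add\T$-precovers presupposes that such precovers exist and that they are deflations. A tilting subcategory is not assumed contravariantly finite, so the precovers you iterate with are not available; and the very first map $T_0\twoheadrightarrow I$ produced by injectivity of $I$ is not a right $\T$-approximation, so you cannot conclude $\EE(\T,K_0)=0$ and the kernels do not stay in $\T^\perp$ as you claim. (Your structural identity $\mathcal F\cap\T^\perp\cap{}^\perp\T=\T$, with $\mathcal F=\thick\mathcal P$, is in fact correct --- it follows from the cotorsion pair $(\T^\vee,\T^\wedge)$ in $\thick\mathcal P$ together with Lemma~\ref{lem1}(4),(5) by a splitting argument --- but you cannot reach an object of this class by iteration without the precovers.) The paper circumvents this entirely: by Lemma~\ref{lem:easy} and Lemma~\ref{main2cor}, $\T$ is silting in $\thick\mathcal P=\thick\mathcal I$, which has enough projectives and injectives, so Theorem~\ref{main2} yields the cotorsion pair $(\T^\vee,\T^\wedge)$ there; then, for $\B$ abelian, Beligiannis--Reiten's $({}^\perp\mathcal P,\thick\mathcal P)$ is a cotorsion pair in $\B$, and the dual of \cite[Proposition 3.4]{CZZ} glues these into a cotorsion pair $({}^\perp\T,\T^\wedge)$ in $\B$, after which cotilting falls out of the dual of Proposition~\ref{main3}.

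Second, your verification of $\operatorname{(CP)}$ leans on ``matching'' ${}^\perp\T$ and $\T^\perp$ with the Gorenstein-projective and Gorenstein-injective classes. That identification is false in general: already for $\T=\mathcal P$ one has $\mathcal P^\perp=\B$, which is not the class of Gorenstein-injectives. The two cotorsion pairs of $\operatorname{(CP)}$ do not come from Gorenstein-homological cotorsion pairs directly. In the paper they come out of Lemma~\ref{lem:eq}: the chain above produces $({}^\perp\T,\T^\wedge)$, the dual of Proposition~\ref{main3} then shows $\T$ is cotilting, and running the dual chain (with $\mathcal I$ in place of $\mathcal P$) produces the other pair $(\T^\vee,\T^\perp)$. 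You will need this two-step transfer, not a direct match with Gorenstein (co)projectives.
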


Applying Theorem~\ref{main1.7} and Proposition~\ref{prop1} to a finite dimensional Gorenstein algebra $A$, we can get that for any finitely generated  $n$-tilting $A$-module $T$, there are triangle equivalences
$$\mod A/(\thick T)\simeq  (T^{\bot}\cap {^{\bot}}T)/[T] \simeq \mathrm{D}^b(A)/\mathrm{K}^b(\add T),$$
where the second equivalence appeared in \cite[Theorem 2.5]{CZ}.

This article is organized as follows. In Section 2, we recall the definition of  silting subcategory in extriangulated category and some basic properties of it. In Section 3, we show that the localization $\mathcal B/(\thick\mathcal W)$ with respect to a presilting subcategory $\mathcal W$ satisfying condition (CP) can be realized as a subfactor category of $\mathcal B$, which is a generalization of the well-known silting reduction.  In Section 4, we show that the localization $\mathcal B/(\thick\mathcal W)$ has a natural  triangulated structure. In Section 5, we  study the relation between silting subcategories and tilting subcategories in extriangulated categories, the results provide a kind of important examples of our silting reduction.

\section{Preliminaries}

\subsection{Extriangulated categories}

We briefly recall the definition and basic properties of extriangulated categories, see \cite{NP} for more details.

Denote by ${\rm Ab}$ the category of abelian groups. Let $(\B, \mathbb{E}, \mathfrak{s})$ be an extriangulated category in the sense of \cite{NP}, where $\B$ is an additive category equipped with an additive bifunctor
$$\EE: \B^{\rm op}\times \B\rightarrow {\rm Ab},$$
and $\mathfrak{s}$ is an \emph{additive realization} of $\mathbb{E}$ in the sense of \cite[Definitions~2.9 and 2.10]{NP}, satisfying certain axioms listed in \cite[Definition~2.12]{NP}. For convenience, we usually say $\B$ is an extriangulated category.

For any objects $A, C$ of $\B$, an element $\delta\in\mathbb{E}(C,A)$ is called an \emph{$\EE$-extension}. Since $\EE$ is an additive bifunctor, for any $\EE$-extensions $\delta\in\EE(C,A)$ and any morphisms $a:A\to A',c:C\to C'$, we have $\EE$-extensions
$$a_\ast\delta:=\EE(C,a)(\delta)\in\EE(C,A')\text{ and }c^\ast\delta:=\EE(c,A)(\delta)\in\EE(C',A).$$
For any $\mathbb{E}$-extension $\delta\in\mathbb{E}(C,A)$, there is an associated class $\mathfrak{s}(\delta)$ of sequences $A\xrightarrow{~x~}B\xrightarrow{~y~}C$ of morphisms in $\B$, where $x$ is called an {\it inflation} and $y$ is called a {\it deflation}. In this case, the pair $(A\xrightarrow{~x~}B\xrightarrow{~y~}C,\delta)$ is called an {\it $\EE$-triangle}, and is written as
$$A\overset{x}{\longrightarrow}B\overset{y}{\longrightarrow}C\overset{\delta}{\dashrightarrow}.$$

Let $A\overset{x}{\longrightarrow}B\overset{y}{\longrightarrow}C\overset{\delta}{\dashrightarrow}$ and $A^{\prime}\overset{x^{\prime}}{\longrightarrow}B^{\prime}\overset{y^{\prime}}{\longrightarrow}C^{\prime}\overset{\delta^{\prime}}{\dashrightarrow}$ be any pair of $\EE$-triangles. Let $a:A\to A'$ and $c:C\to C'$ be any pair of morphisms such that $a_\ast\delta=c^\ast\delta'$. There exists $b:B\to B'$ such that the following diagram commutes.
$$\xymatrix{
A \ar[r]^x \ar[d]^a & B\ar[r]^y \ar[d]^{b} & C\ar@{-->}[r]^{\del}\ar[d]^c&\\
A'\ar[r]^{x'} & B' \ar[r]^{y'} & C'\ar@{-->}[r]^{\del'} &}$$
This triplet $(a,b,c)$ is called a {\it morphism of $\EE$-triangles}. We also call this diagram a \emph{commutative diagram of $\EE$-triangles}.

The following property of extriangulated categories is used frequently in the proofs of this paper.

\begin{prop}[{\cite[Proposition 1.20]{LN}}]\label{prop:NPLN}
Let $A\overset{x}{\longrightarrow}B\overset{y}{\longrightarrow}C\overset{\delta}{\dashrightarrow}$
be any $\EE$-triangle and $a\colon A\to A'$ be any morphism. Then there exists
a morphism $b$ which gives a morphism of $\EE$-triangles
$$\xymatrix{
A \ar[r]^x \ar[d]^a & B\ar[r]^y \ar[d]^{b} & C\ar@{-->}[r]^{\del}\ar@{=}[d]&\\
A'\ar[r]^{u} & B' \ar[r]^{v} & C\ar@{-->}[r]^{a_{\ast}\del} &}$$
such that $A\xrightarrow{\svecv{-a}{x}} A'\oplus B\xrightarrow{\svech{u}{b}} B'\stackrel{v^*\delta}\dashrightarrow$
is an $\EE$-triangle.
\end{prop}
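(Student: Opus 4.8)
The plan is to produce the bottom $\EE$-triangle $A'\xrightarrow{u}B'\xrightarrow{v}C\overset{a_\ast\del}{\dashrightarrow}$, the morphism $b$, and the diagonal $\EE$-triangle all at once from a single application of the octahedron axiom (ET4) of \cite{NP}. (Realizing $a_\ast\del$ and then completing the square by (ET3) of \cite{NP} is cheap, but that still leaves the bulk of the statement --- the diagonal $\EE$-triangle --- untouched, so it is more economical to get everything at once.) The starting observation is that $\svecv{-a}{x}\colon A\to A'\oplus B$ is the composite of two inflations,
$$A\ \xrightarrow{\ \svecv{-a}{1}\ }\ A'\oplus A\ \xrightarrow{\ \id_{A'}\oplus x\ }\ A'\oplus B,$$
the first sitting in the split $\EE$-triangle $A\xrightarrow{\svecv{-a}{1}}A'\oplus A\xrightarrow{\svech{1}{a}}A'\overset{0}{\dashrightarrow}$, and the second sitting in the $\EE$-triangle $A'\oplus A\xrightarrow{\id_{A'}\oplus x}A'\oplus B\xrightarrow{\svech{0}{y}}C\overset{(0,\del)}{\dashrightarrow}$ obtained as the direct sum of the split $\EE$-triangle on $A'$ with the given one (direct sums of $\EE$-triangles are $\EE$-triangles and split sequences realize the zero extension, by additivity of the realization).

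Applying (ET4) to this composable pair of inflations yields an object $E$, an $\EE$-triangle $A\xrightarrow{\svecv{-a}{x}}A'\oplus B\xrightarrow{h'}E\overset{\del''}{\dashrightarrow}$, a second $\EE$-triangle $A'\xrightarrow{d}E\xrightarrow{e}C\overset{\rho}{\dashrightarrow}$, and two commuting squares. The extension $\rho$ is the pushforward of $(0,\del)$ along $\svech{1}{a}$, which simplifies to $a_\ast\del$; hence the second triangle realizes $a_\ast\del$, and I set $B':=E$, $u:=d$, $v:=e$. Reading off the matrix entries from the two commuting squares forces $h'=\svech{u}{b}$ for a unique morphism $b\colon B\to B'$ with $bx=ua$, and forces in addition $vu=0$ and $vb=y$. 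Thus the bottom row and the middle arrow $b$ of the diagram in the statement have been produced, and, since $a_\ast\del=(\id_C)^\ast(a_\ast\del)$, the triple $(a,b,\id_C)$ is a morphism of $\EE$-triangles.

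It then remains only to identify the extension $\del''$ of the first output triangle with $v^\ast\del$. I would extract this from the compatibility relations among the $\EE$-extensions asserted in (ET4), fed the direct-sum decomposition $(0,\del)$ of the input extension together with the long exact sequences attached to the $\EE$-triangles $A'\xrightarrow{u}B'\xrightarrow{v}C\overset{a_\ast\del}{\dashrightarrow}$ and $A'\oplus A\xrightarrow{\id_{A'}\oplus x}A'\oplus B\xrightarrow{\svech{0}{y}}C\overset{(0,\del)}{\dashrightarrow}$. The picture to keep in mind is the abelian prototype, in which $A'\oplus B$ equipped with the maps $\svecv{-a}{x}$ and $\svech{u}{b}$ is exactly the pullback $B\times_{C}B'$, whose extension class is $v^\ast\del$.

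The main obstacle is precisely this last identification, together with the bookkeeping inside the octahedron: (ET4) delivers the intermediate object and arrows only up to a non-canonical completion, so one must use the split summand $A'$ to rigidify the data and then verify that the matrix shapes --- in particular the sign in $\svecv{-a}{x}$, which is dictated by the chosen splitting $\svecv{-a}{1}$ --- and the extension $v^\ast\del$ come out on the nose. None of this is conceptually deep; it is careful diagram chasing with the extriangulated axioms. As a shortcut, should one prefer to bypass the octahedron computation, one may recognise the conclusion as the assertion that the completed square is \emph{homotopy cartesian} and simply invoke the existence and properties of homotopy pushouts/pullbacks established in \cite{NP}.
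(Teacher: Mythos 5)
The paper does not prove this statement; it is quoted from \cite[Proposition~1.20]{LN}, so there is no in-paper proof to compare against. Your reconstruction via (ET4) is correct and, as far as I can tell, follows the same general route as the original source. Two small remarks. First, the step you flag as ``the main obstacle'' --- identifying $\delta''$ with $v^*\delta$ --- is in fact immediate from the third compatibility in (ET4), namely $f_*\delta''=e^*\delta'$ where $f=\svecv{-a}{1}$ and $\delta'=(0,\delta)\in\EE(C,A'\oplus A)$. Decomposing both sides under $\EE(E,A'\oplus A)\cong\EE(E,A')\oplus\EE(E,A)$ gives the pair $(-a_*\delta'',\,\delta'')$ on the left and $(0,\,v^*\delta)$ on the right; equating second components yields $\delta''=v^*\delta$ with no further work, and no long exact sequences are needed. (The first components give $a_*\delta''=0$, which is consistent since $v^*(a_*\delta)=0$ for the deflation $v$ of the triangle realizing $a_*\delta$.) Second, the phrase ``for a unique morphism $b$'' is slightly overstated: the commuting squares from (ET4) force $h'$ to have the shape $\svech{u}{b}$ with $bx=ua$ and $vb=y$, but $b$ itself is only as canonical as the (non-unique) completion supplied by (ET4); since the proposition only asserts existence, this costs you nothing. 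With these two points tightened up, the argument is complete.
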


For convenience, we usually omit the $\EE$-extension in the $\EE$-triangle that realizes it.

\subsection{Syzygy and cosyzygy}

An object $P\in\B$ is called {\it projective} if
for any $\EE$-triangle
$$A\xrightarrow{x} B\xrightarrow{y} C\dashrightarrow$$
and any morphism $c\in{\rm Hom}_{\B}(P,C)$, there exists $b\in{\rm Hom}_{\B}(P,B)$ satisfying $y\circ b=c$. We denote by $\mathcal P$ the full subcategory of projective objects in $\B$. Dually, we can define \emph{injective} objects and denote by $\mathcal I$ the full subcategory of injective objects in $\B$.

\begin{defn}
We say that $\B$ {\it has enough projectives}, if
for any object $C\in\B$, there exists an $\EE$-triangle
$$A\xrightarrow{x} P\xrightarrow{y} C\dashrightarrow$$
with $P\in\mathcal P$.  We can define the notion of \emph{having enough injectives} dually.
\end{defn}

Throughout the rest of the paper, let $k$ be a field and  $\B$ be a Krull-Schmidt, $k$-linear, Hom-finite  extriangulated category with enough projectives and enough injectives. Moreover, we assume that $\B$ satisfies \cite[Condition 5.8]{NP}, i.e.,
\begin{description}
\item[(WIC)] for any morphisms $f:A\to B$ and $g:B\to C$ in $\B$, if $g\circ f$ is an inflation, then so is $f$; if $g\circ f$ is a deflation, then so is $g$.
\end{description}
 All triangulated categories and Krull-Schmidt, Hom-finite, $k$-linear exact categories satisfy this condition.

When we say that $\C$ is a subcategory of $\B$, we always assume that $\C$ is full and closed under isomorphisms, direct sums and direct summands. Note that we do not assume any subcategory we construct has such property.

Let $\C$ and $\D$ be subcategories of $\B$. We denote the following full subcategories
$$\Cone(\C,\D)=\{X\in \B \text{ }|\ \text{there exists an $\EE$-triangle }  C\to D\to X\dashrightarrow ~\mbox{with}~C\in \C,\ D\in \D\},$$
$$\CoCone(\C,\D)=\{X\in \B \text{ }|\ \text{there exists an $\EE$-triangle }  X\to C\to D\dashrightarrow ~\mbox{with}~C\in \C,\ D\in \D\}.$$

\begin{defn}\label{Cone}
A subcategory $\C$ of $\B$ is called \emph{closed under cones} (resp. \emph{closed under cocones}) if $\Cone(\C,\C)\subseteq\C$ (resp. $\CoCone(\C,\C)\subseteq\C$).
\end{defn}

For any subcategory $\C$ of $\B$, let $\C^{\vee}_0=\C^{\wedge}_0=\C$. We denote the full subcategories $\C^{\wedge}_i$ and $\C^{\vee}_i$ for $i>0$ inductively by
$$\C^{\wedge}_i=\Cone(\C^{\wedge}_{i-1},\C),\ \C^{\vee}_i=\CoCone(\C,\C^{\vee}_{i-1}),$$
and denote the full subcategories
$$\C^{\wedge}=\bigcup\limits_{i\geq 0}\C^{\wedge}_i,\ \C^{\vee}=\bigcup\limits_{i\geq 0}\C^{\vee}_i.$$

For any subcategory $\C$ of $\B$, let $\Omega^0 \C=\C$. Then we can define $\Omega^i \C$ for any $i\geq 1$ inductively by $\Omega^i \C=\CoCone(\mathcal P, \Omega^{i-1}\C)$. Dually, let $\Sigma^0 \C=\C$. Then we can define $\Sigma^i \C$ for any $i\geq 1$ inductively by $\Sigma^i \C=\Cone(\Sigma^{i-1}\C,\mathcal I)$.

For any $X\in\B$, we write $\Omega X$ (resp. $\Sigma X$) as an object in $\B$ (which is not necessarily unique) sitting in an $\EE$-triangle
$$\text{$\Omega X\to P\to X\dashrightarrow$ (resp. $X\to I\to \Sigma X\dashrightarrow$)}$$
with $P\in \mathcal P$ (resp. $I\in \mathcal I$). Similarly, one can define $\Omega^i X$ and $\Sigma ^iX$ for any $i\geq 1$ inductively by
$$\Omega^i X=\Omega(\Omega^{i-1}X) \text{ and }\Sigma^iX=\Sigma(\Sigma^{i-1}X).$$
Obviously we have $\Omega^i X\in \Omega^i \C$ (resp. $\Sigma^i X\in \Sigma^i \C$) if $X\in \C$.

\begin{defn}
For any objects $X,Y$ of $\B$, we define \emph{higher extension groups} as $$\EE^{i+1}(X,Y):=\EE(\Omega^iX,Y), i\geq 1.$$
\end{defn}

The following lemma tells us that the definition of higher extensions does not depend on the choice of $\Omega^iX$.

\begin{lem}[{\cite[Proposition 5.2]{LN}}]\label{lem:high}
For any objects $X,Y$ of $\B$ and any non-negative integers $i,j$, we have the following isomorphisms of groups
$$\EE(\Omega^{i+j}X,  Y)\simeq \EE(\Omega^iX,\Sigma^jY)\simeq \EE(X,\Sigma^{i+j} Y).$$
\end{lem}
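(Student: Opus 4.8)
The plan is to reduce the general statement to the single case
$$\EE(\Omega Z,W)\cong\EE(Z,\Sigma W)\qquad(Z,W\in\B),$$
and then recover the asserted chain of isomorphisms by telescoping. Granting the single-step case, put $n=i+j$ and recall that by definition $\Omega^{m}X=\Omega(\Omega^{m-1}X)$ and $\Sigma^{m}Y=\Sigma(\Sigma^{m-1}Y)$; applying the single-step isomorphism in turn with $Z=\Omega^{n-k-1}X$ and $W=\Sigma^{k}Y$ for $k=0,1,\dots,n-1$ gives
$$\EE(\Omega^{n}X,Y)\cong\EE(\Omega^{n-1}X,\Sigma Y)\cong\cdots\cong\EE(\Omega^{i}X,\Sigma^{j}Y)\cong\cdots\cong\EE(X,\Sigma^{n}Y),$$
which is precisely the claim. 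Comparing any two choices of the $i$-th syzygy (or the $j$-th cosyzygy) through the common term $\EE(X,\Sigma^{n}Y)$, the same computation also shows that these groups do not depend on the choices of syzygies and cosyzygies involved, which is the content of the remark preceding the lemma.

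For the single-step case I would fix two $\EE$-triangles
$$\Omega Z\xrightarrow{\ f\ }P\xrightarrow{\ g\ }Z\overset{\delta}{\dashrightarrow},\qquad W\xrightarrow{\ u\ }I\xrightarrow{\ v\ }\Sigma W\overset{\eta}{\dashrightarrow}$$
with $P$ projective and $I$ injective. First, $\EE(P,-)=0$ and $\EE(-,I)=0$: any $\EE$-triangle whose third term is $P$, or whose first term is $I$, splits, by the defining property of a projective, respectively injective, object. Applying $\EE(-,\Sigma W)$ to the first $\EE$-triangle and $\EE(\Omega Z,-)$ to the second yields the long exact sequences of \cite{NP}; truncating these by means of $\EE(P,\Sigma W)=0$ and $\EE(\Omega Z,I)=0$, the two connecting homomorphisms become surjective and give
$$\EE(Z,\Sigma W)\ \cong\ \Hom_{\B}(\Omega Z,\Sigma W)\,\big/\,\operatorname{Im}\big(f^{*}\colon\Hom_{\B}(P,\Sigma W)\to\Hom_{\B}(\Omega Z,\Sigma W)\big),$$
$$\EE(\Omega Z,W)\ \cong\ \Hom_{\B}(\Omega Z,\Sigma W)\,\big/\,\operatorname{Im}\big(v_{*}\colon\Hom_{\B}(\Omega Z,I)\to\Hom_{\B}(\Omega Z,\Sigma W)\big).$$

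The crux is to see that these two subgroups of $\Hom_{\B}(\Omega Z,\Sigma W)$ coincide. The first is exactly the set of morphisms $\Omega Z\to\Sigma W$ that factor through $f$, and the second the set of those that factor through $v$; so it suffices to show that a morphism $\phi\colon\Omega Z\to\Sigma W$ factors through $f$ if and only if it factors through $v$. If $\phi=hf$ with $h\colon P\to\Sigma W$, then, $v$ being a deflation and $P$ projective, $h$ lifts to some $\bar h\colon P\to I$ with $v\bar h=h$, and so $\phi=v(\bar h f)$ factors through $v$. Conversely, if $\phi=vg$ with $g\colon\Omega Z\to I$, then, $f$ being an inflation and $I$ injective, $g$ extends to some $\bar g\colon P\to I$ with $\bar g f=g$, and so $\phi=(v\bar g)f$ factors through $f$. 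Hence the two quotients agree, and $\EE(\Omega Z,W)\cong\EE(Z,\Sigma W)$.

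The only real difficulty I foresee is bookkeeping: invoking the two long exact sequences of \cite{NP} with the correct variances and connecting maps, and checking that the syzygies and cosyzygies occurring in the telescoping are literally the iterates $\Omega^{m}X$ and $\Sigma^{m}Y$ appearing in the statement --- which they are, by the way these iterates are defined. The one genuinely substantive point, that a morphism factors through $f$ if and only if it factors through $v$, becomes short once one recognizes both $\EE$-groups as the same quotient of $\Hom_{\B}(\Omega Z,\Sigma W)$.
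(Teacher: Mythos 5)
Your proof is correct, and it is essentially the standard argument that [LN, Proposition 5.2] uses (the paper gives no proof of its own, just the citation). The reduction to the single-step case $\EE(\Omega Z,W)\cong\EE(Z,\Sigma W)$ followed by telescoping is exactly right, and the single-step argument --- identifying both sides as quotients of $\Hom_\B(\Omega Z,\Sigma W)$ and showing that the two subgroups coincide by lifting along the deflation $v$ (using projectivity of $P$) and extending along the inflation $f$ (using injectivity of $I$) --- is the clean and canonical proof. Your remark about well-definedness of the syzygy/cosyzygy iterates is also a necessary and correct observation; it is precisely why the paper introduces this lemma, and the ``compare two choices through the common endpoint $\EE(X,\Sigma^{n}Y)$'' reasoning handles it, with the symmetric version taking care of the cosyzygy side.
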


As in triangulated categories  and exact categories, $\EE$-triangles can produce long exact sequences of  abelian groups.

\begin{lem}[{\cite[Propositin~5.2]{LN}}]\label{lem:long}
For any $\EE$-triangle $A\xrightarrow{x}B\xrightarrow{y}C\stackrel{\delta}{\dashrightarrow}$ and any objects $X$ and $Y$ of $\B$, there are  long exact sequences
$$\begin{array}{cccccccc}
&&\Hom_{\B}(X,A)&\xrightarrow{x\circ-}&\Hom_{\B}(X,B)&\xrightarrow{y\circ-}&\Hom_{\B}(X,C)\\
&\to&\EE(X,A)&\to&\EE(X,B)&\to&\EE(X,C)\\
&\cdots&\cdots&\cdots&\cdots&\cdots&\cdots\\
&\to&\EE^i(X,A)&\to&\EE^i(X,B)&\to&\EE^i(X,C)\\
&\to&\EE^{i+1}(X,A)&\to&\cdots&\cdots&\cdots
\end{array}$$
and
$$\begin{array}{cccccccc}
&&\Hom_{\B}(C,Y)&\xrightarrow{-\circ y}&\Hom_{\B}(B,Y)&\xrightarrow{-\circ x}&\Hom_{\B}(A,Y)\\
&\to&\EE(C,Y)&\to&\EE(B,Y)&\to&\EE(A,Y)\\
&\cdots&\cdots&\cdots&\cdots&\cdots&\cdots\\
&\to&\EE^i(C,Y)&\to&\EE^i(B,Y)&\to&\EE^i(A,Y)\\
&\to&\EE^{i+1}(C,Y)&\to&\cdots&\cdots&\cdots.
\end{array}$$
\end{lem}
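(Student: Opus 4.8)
The plan is to assemble both long exact sequences by splicing together, one cohomological degree at a time, the fundamental six-term exact sequences attached to an $\EE$-triangle, using syzygies (resp.\ cosyzygies) to bridge consecutive degrees. I describe the argument for the first sequence; the second is dual. The starting point is the fundamental exact sequences \cite[Corollary~3.12]{NP}: for any $\EE$-triangle $A'\to B'\to C'\dashrightarrow$ and any object $Z$, applying $\Hom_{\B}(Z,-)$ (resp.\ $\Hom_{\B}(-,Z)$) produces a six-term exact sequence $\Hom_{\B}(Z,A')\to\Hom_{\B}(Z,B')\to\Hom_{\B}(Z,C')\to\EE(Z,A')\to\EE(Z,B')\to\EE(Z,C')$ (resp.\ its contravariant analogue). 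Applied to $A\xrightarrow{x}B\xrightarrow{y}C\overset{\delta}{\dashrightarrow}$ with $Z=X$, this is precisely the asserted sequence up through $\EE(X,C)=\EE^{1}(X,C)$, so only the higher terms remain.

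The key device is to present the higher extension functors as quotients of $\Hom$. For each $i\ge 1$, fix a syzygy $\EE$-triangle $\Omega^{i}X\xrightarrow{p_{i}}P_{i-1}\xrightarrow{q_{i}}\Omega^{i-1}X\overset{\zeta_{i}}{\dashrightarrow}$ with $P_{i-1}\in\mathcal P$, so that $\EE^{i}(X,Y)=\EE(\Omega^{i-1}X,Y)$ by definition. Applying \cite[Corollary~3.12]{NP} to this triangle and $\Hom_{\B}(-,Y)$, and using $\EE(P_{i-1},Y)=0$, gives a natural epimorphism $\zeta_{i}^{\sharp}\colon\Hom_{\B}(\Omega^{i}X,Y)\twoheadrightarrow\EE^{i}(X,Y)$ with kernel the image of $p_{i}^{\ast}$. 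I would then define the connecting map $\EE^{i}(X,C)\to\EE^{i+1}(X,A)$ on representatives by $\zeta_{i}^{\sharp}(f)\mapsto f^{\ast}\delta$, where one notes that $f^{\ast}\delta\in\EE(\Omega^{i}X,A)=\EE^{i+1}(X,A)$; it is well defined because $\EE(P_{i-1},A)=0$ forces $(g\circ p_{i})^{\ast}\delta=p_{i}^{\ast}(g^{\ast}\delta)=0$ for every $g\colon P_{i-1}\to C$. With these maps fixed, each remaining exactness assertion for the first sequence reduces, by a short diagram chase, to the exactness of one of these six-term sequences: exactness at $\EE^{i}(X,B)$ is exactness of the six-term sequence of $A\to B\to C\dashrightarrow$ for the object $\Omega^{i-1}X$, while exactness at $\EE^{i}(X,C)$ and at $\EE^{i+1}(X,A)$ follows from the six-term sequence for $\Omega^{i}X$ together with the naturality and surjectivity of $\zeta_{i}^{\sharp}$.

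For the second long exact sequence I would run the dual argument with $Y$ fixed, using Lemma~\ref{lem:high} to identify $\EE^{i}(Z,Y)\cong\EE(Z,\Sigma^{i-1}Y)$, replacing the syzygy triangles by cosyzygy triangles $\Sigma^{i-1}Y\to I_{i-1}\to\Sigma^{i}Y\dashrightarrow$ with $I_{i-1}\in\mathcal I$, and using the vanishing $\EE(-,I_{i-1})=0$ in place of $\EE(P_{i-1},-)=0$. The step I expect to require the most care is \emph{coherence}: confirming that the connecting maps are independent of the chosen syzygies (resp.\ cosyzygies) and splice into the six-term sequences exactly as asserted. For the first sequence this amounts to checking that the naturality square for $\zeta_{i}^{\sharp}$ identifies the candidate connecting map with the genuine connecting morphism $\delta_{\sharp}$ of the six-term sequence attached to $\Omega^{i}X$; for the second sequence, the independence of the choices is precisely what the isomorphisms of Lemma~\ref{lem:high} (and the argument establishing them) guarantee. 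Everything else is a formal manipulation of the exact sequences of \cite{NP}.
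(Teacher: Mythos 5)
The paper does not supply its own proof here; Lemma~\ref{lem:long} is quoted verbatim from \cite[Proposition~5.2]{LN}. Your argument is the standard dimension-shifting proof and is correct: applying \cite[Corollary~3.12]{NP} to the syzygy triangles $\Omega^{i}X\to P_{i-1}\to\Omega^{i-1}X\dashrightarrow$ gives the epimorphisms $\zeta_i^{\sharp}\colon\Hom_{\B}(\Omega^i X,-)\twoheadrightarrow\EE^i(X,-)$, the well-definedness of $\zeta_i^{\sharp}(f)\mapsto f^{\ast}\delta$ is forced by $\EE(P_{i-1},A)=0$, and exactness at each spot reduces to exactness of the six-term sequence of $A\to B\to C\dashrightarrow$ evaluated at $\Omega^{i-1}X$ or $\Omega^{i}X$, combined with $x_{\ast}\delta=0$, $y^{\ast}\delta=0$ and the surjectivity of $\zeta_i^{\sharp}$. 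One point worth making explicit: since the paper defines $\EE^{i+1}(X,Y)$ asymmetrically as $\EE(\Omega^i X,Y)$, the three groups $\EE^i(A,Y)$, $\EE^i(B,Y)$, $\EE^i(C,Y)$ in the contravariant sequence are \emph{a priori} computed from three unrelated syzygies, so you are right that some rewriting is needed before the six-term sequence applies; your route via Lemma~\ref{lem:high} and cosyzygies of $Y$ is the cleanest way to do this and is not circular, because Lemma~\ref{lem:high} is itself a prior consequence of the same six-term sequences (one could instead build compatible syzygies of $A,B,C$ by a horseshoe-type argument, but that is more work). Altogether your proposal is a complete and correct proof matching the approach one would expect in \cite{LN}.
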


\subsection{Presilting subcategories and cotorsion pairs}

We recall the definition of a presilting subcategory in an extriangulated category.

\begin{defn}[{\cite[Section~3]{AT}}]\label{presil}
A subcategory $\s$ of $\B$ is called a \emph{presilting subcategory} if $\EE^i(\s,\s)=0, \forall i\geq 1$. An object $S$ of $\B$ is called a \emph{presilting object} if its additive closure $\add S$ is a presilting subcategory.
\end{defn}

We have the following properties of $\s^\wedge$ and $\s^\vee$ for a presilting subcategory $\s$.

\begin{lem}[{\cite[Section 3]{AT}}]\label{lem1}
Let $\s$ be a presilting subcategory. Then the following statements hold.
\begin{enumerate}
\item $\EE^i(\s^{\vee},\s^{\wedge})=0, \forall i>0$ and $\s^{\vee}\cap \s^{\wedge}=\s$.
\item $\s^{\vee}$ is closed under direct sums, direct summands, extensions and cocones.
\item $\s^{\wedge}$ is closed under direct sums, direct summands, extensions and cones.
\item $\s^\perp=(\s^{\vee})^\perp$ and ${}^\perp\s={}^\perp(\s^{\wedge})$.
\item $\s^\perp\cap\s^\vee=\s={}^\perp\s\cap\s^{\wedge}$.
\end{enumerate}
\end{lem}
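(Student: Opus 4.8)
All five assertions, together with their duals, will be proved by induction on the \emph{$\wedge$-length} (resp.\ \emph{$\vee$-length}) of the objects involved --- the least $n$ with the object in $\s^{\wedge}_n$ (resp.\ $\s^{\vee}_n$) --- the basic tools being the long exact sequences of Lemma~\ref{lem:long}, the shift isomorphisms of Lemma~\ref{lem:high}, and the base-change and cobase-change properties of extriangulated categories (Proposition~\ref{prop:NPLN} and the axiom (ET4) and its dual, see \cite{NP}). As the $\s^{\wedge}$-statements are dual to the $\s^{\vee}$-statements, it suffices to treat one of each pair. A preliminary step, by an easy induction using the split $\EE$-triangle $0\to X\xrightarrow{\id}X\dashrightarrow$ for the base case, is the monotonicity $\s^{\wedge}_i\subseteq\s^{\wedge}_{i+1}$ and $\s^{\vee}_i\subseteq\s^{\vee}_{i+1}$; hence every object of $\s^{\wedge}$ admits an ``$\s$-resolution'' --- a string of $\EE$-triangles $X_{j+1}\to S_j\to X_j\dashrightarrow$ with $S_j\in\s$, $X_0=X$ and $X_n\in\s$ --- of any length at least its $\wedge$-length.

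For (1), I would first prove $\EE^i(\s,\s^{\wedge})=0$ for $i>0$ by induction on $\wedge$-length: the base case is Definition~\ref{presil}, and for $X\in\s^{\wedge}_n$ one applies $\Hom_{\B}(S,-)$ with $S\in\s$ to a defining $\EE$-triangle $C\to S_0\to X\dashrightarrow$ ($C\in\s^{\wedge}_{n-1}$, $S_0\in\s$) and reads the vanishing off Lemma~\ref{lem:long}. Then $\EE^i(\s^{\vee},\s^{\wedge})=0$ follows by the dual induction on $\vee$-length, applying $\Hom_{\B}(-,X)$ with $X\in\s^{\wedge}$ to a defining $\EE$-triangle $Y\to S_0\to D\dashrightarrow$ of $Y\in\s^{\vee}$. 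For $\s^{\vee}\cap\s^{\wedge}=\s$: $\supseteq$ is trivial, and if $X\in\s^{\vee}\cap\s^{\wedge}$ then a defining $\EE$-triangle $C\to S_0\to X\dashrightarrow$ of $X\in\s^{\wedge}$ splits, since $\EE(X,C)=0$ by the above, so $X$ is a direct summand of $S_0\in\s$. Part~(4) uses the same idea: $\supseteq$ is formal (from $\s\subseteq\s^{\vee}$, resp.\ $\s\subseteq\s^{\wedge}$) and $\subseteq$ is an induction on $\vee$- (resp.\ $\wedge$-) length via Lemma~\ref{lem:long}. Part~(5) then follows from (1) and (4): if $X\in\s^{\perp}\cap\s^{\vee}$, part~(4) gives $\EE(\s^{\vee},X)=0$, so a defining $\EE$-triangle $X\to S_0\to D\dashrightarrow$ of $X\in\s^{\vee}$ splits and presents $X$ as a summand of $S_0\in\s$; the inclusion $\supseteq$ uses that $\s$ is presilting.

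For (2) and (3), arguing on the $\s^{\wedge}$-side: closure under finite direct sums is immediate by direct-summing $\s$-resolutions. Closure under extensions I would prove by induction on the common length of $\s$-resolutions of the two outer terms of an $\EE$-triangle $X\to Z\to X'\dashrightarrow$, the base case (both terms in $\s$) splitting by Definition~\ref{presil}, and the inductive step using an extriangulated ``horseshoe lemma'' (assembled from Proposition~\ref{prop:NPLN} and (ET4)) to produce an $\EE$-triangle $Z_1\to S_0\oplus S_0'\to Z\dashrightarrow$ with $Z_1$ an extension of the two first syzygies; then $Z_1\in\s^{\wedge}$ by induction and $Z\in\Cone(\s^{\wedge},\s)\subseteq\s^{\wedge}$ (note $\Cone(\s^{\wedge}_m,\s)=\s^{\wedge}_{m+1}$). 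Closure under cones follows from closure under extensions: for an $\EE$-triangle $A\to B\to C\dashrightarrow$ with $A,B\in\s^{\wedge}$ and $B\notin\s$, compose a deflation $S_0\to B$ ($S_0\in\s$) with $B\to C$, which is again a deflation, and use the dual of (ET4) to identify its kernel as an extension of $A$ by an object of $\s^{\wedge}$ of strictly smaller length, whence $C\in\Cone(\s^{\wedge},\s)\subseteq\s^{\wedge}$.

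The step I expect to be the real obstacle is closure of $\s^{\wedge}$ under direct summands, where it seems essential to have part~(1) already available. I would prove, by induction on $n$, the sharper statement that every direct summand of an object of $\s^{\wedge}_n$ lies in $\s^{\wedge}_n$. Given $X\oplus X'\in\s^{\wedge}_n$ with $n\ge 1$, take a defining $\EE$-triangle $W\to S_0\xrightarrow{w}X\oplus X'\dashrightarrow$ with $W\in\s^{\wedge}_{n-1}$ and $S_0\in\s$; composing $w$ with the split projections gives deflations $S_0\to X$ and $S_0\to X'$ with kernels $W_X$ and $W_{X'}$, hence $\EE$-triangles $W_X\to S_0\to X\dashrightarrow$ and $W_{X'}\to S_0\to X'\dashrightarrow$. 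Forming the pullback $E$ of $w$ along the deflation $S_0\oplus S_0\to X\oplus X'$ built from these projections yields $\EE$-triangles $W\to E\to S_0\oplus S_0\dashrightarrow$ and $W_X\oplus W_{X'}\to E\to S_0\dashrightarrow$; the first splits because $\EE(\s,\s^{\wedge})=0$ by part~(1), and the second splits via the diagonal section $s\mapsto(s,s,s)$, so $W_X\oplus W_{X'}$ is a direct summand of $W\oplus S_0\oplus S_0\in\s^{\wedge}_{n-1}$. By the induction hypothesis $W_X\in\s^{\wedge}_{n-1}$, and the $\EE$-triangle $W_X\to S_0\to X\dashrightarrow$ then gives $X\in\Cone(\s^{\wedge}_{n-1},\s)=\s^{\wedge}_n$. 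Everything else is routine bookkeeping with the tools above.
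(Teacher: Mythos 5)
The paper does not prove this lemma itself; it is cited wholesale from Adachi--Tsukamoto \cite[Section 3]{AT}, so there is no in-paper argument to compare against. What you have written is a correct reconstruction in the standard Auslander--Buchweitz style, and it is almost certainly the same circle of ideas used in the source.

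A few remarks on the more delicate spots. Your ordering of the steps is the right one: proving $\EE^i(\s,\s^{\wedge})=0$ (and its dual) first is what makes both the horseshoe lemma for extension-closedness and the direct-summand argument go through, since you need $\EE(S_0,W)=0$ to lift and to split. For closure under direct summands, your pullback construction is correct in substance, but the phrasing ``forming the pullback $E$'' sweeps a little under the rug in the extriangulated setting; strictly speaking one should produce $E$ via the dual of Proposition~\ref{prop:NPLN}, pulling $\delta\in\EE(X\oplus X',W)$ back along $q=\left(\begin{smallmatrix}p&0\\0&p'\end{smallmatrix}\right)$, which yields the $\EE$-triangle $W\to E\to S_0\oplus S_0\dashrightarrow$ together with the Mayer--Vietoris triangle $E\to S_0^{\oplus 3}\xrightarrow{(q\ w)}X\oplus X'\dashrightarrow$. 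The latter triangle, after the elementary column operation killing the third block by the first two, has middle map $\left(\begin{smallmatrix}p&0&0\\0&p'&0\end{smallmatrix}\right)$, so its cocone is $W_X\oplus W_{X'}\oplus S_0$; comparing this identification of $E$ with the splitting $E\cong W\oplus S_0\oplus S_0$ coming from $\EE(\s,\s^{\wedge})=0$ gives your desired summand relation directly, and you never actually need to invoke a weak universal property of the pullback or a ``diagonal section.'' With that cosmetic repair the whole proof is sound, and the inductions on $\wedge$- and $\vee$-length do the rest exactly as you describe.
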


We recall the definition of a cotorsion pair in an extriangulated category. Note that it is also called a \emph{complete} cotorsion pair in many articles (e.g., \cite{S}).

\begin{defn}[{\cite[Definition 4.1]{NP}}]\label{cotorsion}
Let $\U$ and $\V$ be two subcategories of $\B$. We call $(\U,\V)$ a \emph{cotorsion pair} if $$\EE(\U,\V)=0\text{ and }\B=\Cone(\V,\U)=\CoCone(\V,\U).$$
\end{defn}

Let $\C$ be a subcategory of $\B$. We introduce the following notions:
\begin{enumerate}
\item $\C^{\bot}=\{X\in \B\text{ }|\text{ }\EE^i(\C,X)=0, \forall i>0\}$;
\item $\C^{\bot_1}=\{X\in \B\text{ }|\text{ }\EE(\C,X)=0\}$;
\item ${^{\bot}}\C=\{X\in \B\text{ }|\text{ }\EE^i(X,\C)=0, \forall i>0\}$;
\item ${^{\bot_1}}\C=\{X\in \B\text{ }|\text{ }\EE(X,\C)=0\}$.
\end{enumerate}



We have the following properties for cotorsion pairs.

\begin{lem}[{\cite[Remark~3.2]{CZZ}}]\label{lem:basic}
Let $(\U,\V)$ be a cotorsion pair in $\B$. Then
\begin{enumerate}
\item $\V=\U^{\bot_1}$;
\item $\U={^{\bot_1}}\V$;
\item $\U$ and $\V$ are closed under extensions;
\item $\mathcal I\subseteq \V$ and $\mathcal P\subseteq \U$;
\item $\U$ is contravariantly finite in $\B$ and $\V$ is covariantly finite in $\B$.
\end{enumerate}
\end{lem}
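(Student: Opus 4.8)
\textit{Proof plan.} The whole lemma is a formal consequence of the definition of a cotorsion pair together with two elementary facts about extriangulated categories. The first is the split--triangle criterion: an $\EE$-triangle $A\to B\to C\stackrel{\delta}{\dashrightarrow}$ with $\delta=0$ is split, so $A$ and $C$ are direct summands of $B$. The second is the long exact sequences attached to an $\EE$-triangle (Lemma~\ref{lem:long}). I also record the auxiliary fact that $\EE(P,-)=0$ for every projective $P$ and $\EE(-,I)=0$ for every injective $I$: realizing an arbitrary $\delta\in\EE(P,A)$ as an $\EE$-triangle $A\to B\xrightarrow{y}P\stackrel{\delta}{\dashrightarrow}$ and lifting $\id_P$ along $y$ splits this triangle, forcing $\delta=0$; the injective case is dual.

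For (1), the inclusion $\V\subseteq\U^{\bot_1}$ is precisely $\EE(\U,\V)=0$. Conversely, given $X\in\U^{\bot_1}$, use $\B=\CoCone(\V,\U)$ to choose an $\EE$-triangle $X\to V^X\to U^X\stackrel{\delta}{\dashrightarrow}$ with $V^X\in\V$, $U^X\in\U$; since $\delta\in\EE(U^X,X)=0$ the triangle splits, so $X$ is a direct summand of $V^X$ and hence lies in $\V$ (which is closed under direct summands). Statement (2) is proved dually, using $\B=\Cone(\V,\U)$ and the split-triangle criterion for $V_X\to U_X\to X\stackrel{\delta}{\dashrightarrow}$.

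For (3), let $V'\to B\to V''\dashrightarrow$ be an $\EE$-triangle with $V',V''\in\V$. For any $U\in\U$, applying $\Hom_\B(U,-)$ gives an exact segment $\EE(U,V')\to\EE(U,B)\to\EE(U,V'')$ with both outer terms zero, so $\EE(U,B)=0$; hence $B\in\U^{\bot_1}=\V$ by (1). The statement for $\U$ follows dually via $\Hom_\B(-,Y)$ and (2). For (4), any injective $I$ satisfies $\EE(\U,I)=0$, so $I\in\U^{\bot_1}=\V$, and dually $\mathcal P\subseteq\U$. For (5), given $B\in\B$ choose an $\EE$-triangle $V_B\to U_B\xrightarrow{u}B\stackrel{\delta}{\dashrightarrow}$ with $U_B\in\U$, $V_B\in\V$; applying $\Hom_\B(U,-)$ for $U\in\U$ and using $\EE(U,V_B)=0$ shows that $\Hom_\B(U,U_B)\xrightarrow{u\circ-}\Hom_\B(U,B)$ is surjective, i.e.\ $u$ is a right $\U$-approximation, so $\U$ is contravariantly finite; the covariant finiteness of $\V$ follows dually from the $\EE$-triangle $B\to V^B\to U^B\dashrightarrow$ and $\Hom_\B(-,V)$.

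I do not expect any genuine obstacle here: the argument is a careful unwinding of the definition of a cotorsion pair against the split-triangle criterion and the long exact sequences, the only care needed being to keep the variance of $\Hom_\B$ straight when passing to the dual statements. The single point requiring a separate (but immediate) observation is the $\EE$-acyclicity of projectives and injectives invoked in (2)--(4), established as above from their defining lifting/extension property; alternatively one may simply cite \cite[Remark~3.2]{CZZ}.
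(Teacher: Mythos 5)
Your proposal is correct. The paper itself does not give a proof of this lemma (it merely cites \cite[Remark~3.2]{CZZ}), and the argument you supply is the standard one: each inclusion $\V\subseteq\U^{\bot_1}$ and $\U\subseteq{}^{\bot_1}\V$ is immediate from $\EE(\U,\V)=0$, the reverse inclusions follow by realizing an approximation $\EE$-triangle from $\Cone(\V,\U)=\B=\CoCone(\V,\U)$ and observing it splits, and (3)--(5) follow from (1)--(2), the long exact sequences, and $\EE$-acyclicity of projectives and injectives. One tiny slip in your closing remark: the acyclicity of $\mathcal P$ and $\mathcal I$ is used only in part (4), not in (2) (the splitting in (2) comes from $X\in{}^{\bot_1}\V$ directly); this does not affect the validity of the proof.
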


\subsection{Silting subcategories}

\begin{defn}\label{thick}
A subcategory $\C$ of $\B$ is called \emph{thick} provides that for any $\EE$-triangle $A\to B\to C\dashrightarrow$, if any two objects of
$A,B$ and $C$ belong to $\C$, then so is the third one.
\end{defn}

For any subcategory $\D$ of $\B$, we denote by $\thick \D$ the smallest thick subcategory of $\B$ containing $\D$.

\begin{defn}[{\cite[Definition 5.1]{AT}}]\label{def:sil}
A subcategory $\s$ of $\B$ is called \emph{silting}  if $\s$ is presilting and $\B=\thick\s$.
\end{defn}

Note that for any presilting subcategory $\s$, since $\thick \s$ is closed under extensions, it is an extriangulated category. Moreover, inheriting from $\B$, it is Krull-Schmidt, Hom-finite and $k$-linear. By definition, we have the following lemma.

\begin{lem}\label{main2cor}
Let $\s$ be a presilting subcategory. If $\mathcal P\cup\mathcal I\subseteq\thick\s$, then $\s$ is a silting subcategory in $\thick \s$.
\end{lem}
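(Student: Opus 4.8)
The plan is to prove that if $\s$ is presilting and $\mathcal P\cup\mathcal I\subseteq\thick\s$, then $\s$ is silting in $\thick\s$. Since $\thick\s$ is itself a Krull-Schmidt, Hom-finite, $k$-linear extriangulated category (closed under extensions in $\B$), and $\s$ is clearly presilting inside $\thick\s$ (vanishing of higher extensions is inherited), the only thing to check is that $\thick_{\thick\s}\s=\thick\s$, i.e.\ the smallest thick subcategory of $\thick\s$ containing $\s$ is all of $\thick\s$.

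First I would observe that $\thick_{\thick\s}\s$ and $\thick_{\B}\s$ coincide: a subcategory of $\thick\s$ that is thick as a subcategory of $\thick\s$ is also thick as a subcategory of $\B$, because any $\EE$-triangle in $\B$ with two terms in $\thick\s$ automatically has its third term in $\thick\s$ (thickness of $\thick\s$ in $\B$), so the $\EE$-triangle already lives in $\thick\s$; hence the thick-closure operation does not depend on whether it is computed in $\B$ or in $\thick\s$. Therefore $\thick_{\thick\s}\s=\thick_\B\s=\thick\s$, giving $\thick_{\thick\s}\s=\thick\s$, which is exactly the silting condition. Wait — this argument shows $\thick_{\thick\s}\s=\thick\s$ without even using $\mathcal P\cup\mathcal I\subseteq\thick\s$; so the hypothesis must instead be needed to guarantee that $\thick\s$ has enough projectives and enough injectives, so that it qualifies as an extriangulated category of the type considered in this paper (and so that "silting subcategory of $\thick\s$" makes sense in the sense of Definition~\ref{def:sil}, where the ambient category is assumed to have enough projectives/injectives).

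So the key steps are: (i) note $\thick\s$ is extension-closed in $\B$, hence extriangulated, and Krull-Schmidt, Hom-finite, $k$-linear, and satisfies (WIC) (inherited from $\B$); (ii) use $\mathcal P\subseteq\thick\s$ to show $\thick\s$ has enough projectives — given $C\in\thick\s$, take an $\EE$-triangle $A\to P\to C\dashrightarrow$ in $\B$ with $P\in\mathcal P$; since $P,C\in\thick\s$ and $\thick\s$ is thick, $A\in\thick\s$, and $P$ is still projective in $\thick\s$ (projectivity is tested against $\EE$-triangles, which in $\thick\s$ form a subclass of those in $\B$); dually, $\mathcal I\subseteq\thick\s$ gives enough injectives; (iii) $\s$ is presilting in $\thick\s$ since $\EE^i_{\thick\s}(\s,\s)=\EE^i_\B(\s,\s)=0$ (the syzygies $\Omega^i$ can be taken inside $\thick\s$ by step (ii), and higher extensions are independent of this choice by Lemma~\ref{lem:high}); (iv) $\thick_{\thick\s}\s=\thick\s$ as argued above. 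Combining, $\s$ is silting in $\thick\s$.

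The main obstacle — and the only genuinely non-formal point — is step (ii), ensuring that the ambient standing hypotheses on extriangulated categories (enough projectives and injectives, needed for Definition~\ref{def:sil} and for the notion of higher extension groups $\EE^i$ to be available in $\thick\s$) are actually satisfied by $\thick\s$; this is precisely where $\mathcal P\cup\mathcal I\subseteq\thick\s$ enters, and one must check that an object of $\mathcal P$ (resp.\ $\mathcal I$) remains projective (resp.\ injective) relative to the extriangulated structure of $\thick\s$, which is immediate since the $\EE$-triangles of $\thick\s$ are exactly those $\EE$-triangles of $\B$ all of whose terms lie in $\thick\s$. Everything else is bookkeeping about how thickness and the thick-closure interact with passing to an extension-closed subcategory.
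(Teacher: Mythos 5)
Your proof is correct and takes essentially the same approach as the paper, which simply asserts the lemma holds ``by definition'' after noting that $\thick\s$ is an extension-closed (hence extriangulated) Krull-Schmidt, Hom-finite, $k$-linear subcategory; you have merely supplied the details that the paper leaves implicit, in particular the observation that $\mathcal P\cup\mathcal I\subseteq\thick\s$ is exactly what guarantees $\thick\s$ has enough projectives and injectives (so that ``presilting'' and ``silting'' make sense there) and that $\thick_{\thick\s}\s=\thick_\B\s=\thick\s$ holds automatically because thickness of a subcategory of $\thick\s$ is detected equally in $\thick\s$ and in $\B$.
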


The following criterion on a presilting subcategory to be silting is a direct consequence of \cite[Theorem~5.5]{AT}.

\begin{thm}\label{main2}
A presilting subcategory $\s$ of $\B$ is silting if and only if $(\s^{\vee},\s^{\wedge})$ is a cotorsion pair.
\end{thm}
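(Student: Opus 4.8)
The plan is to prove the two implications separately, using Theorem~5.5 of \cite{AT} as a black box; the point is simply to translate its hypotheses into the language of cotorsion pairs. Recall from Lemma~\ref{lem1} that for a presilting subcategory $\s$ one has $\EE^i(\s^{\vee},\s^{\wedge})=0$ for all $i>0$, $\s^\vee$ is closed under cocones and extensions, and $\s^\wedge$ is closed under cones and extensions; these are the ambient facts that make everything below run.

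First I would treat the ``only if'' direction. Assume $\s$ is silting, i.e. $\B=\thick\s$. The condition $\EE(\s^\vee,\s^\wedge)=0$ is immediate from Lemma~\ref{lem1}(1). So the substance is to produce, for every $B\in\B$, two $\EE$-triangles $V_B\to U_B\to B\dashrightarrow$ and $B\to V^B\to U^B\dashrightarrow$ with $U_B,U^B\in\s^\vee$ and $V_B,V^B\in\s^\wedge$; in other words $\B=\Cone(\s^\wedge,\s^\vee)=\CoCone(\s^\wedge,\s^\vee)$. This is exactly the kind of resolution statement that \cite[Theorem~5.5]{AT} supplies: being silting means every object of $\B=\thick\s$ can be built from $\s$, and one extracts from that a finite filtration whose subquotients lie in $\add\s$, which rearranges (using the horseshoe-type arguments available in an extriangulated category, together with (WIC) and Proposition~\ref{prop:NPLN}) into the two desired $\EE$-triangles. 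Dually one gets the co-resolution. Once both resolutions exist, $(\s^\vee,\s^\wedge)$ satisfies Definition~\ref{cotorsion} and is a cotorsion pair.

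For the ``if'' direction, assume $(\s^\vee,\s^\wedge)$ is a cotorsion pair. Since $\s$ is already presilting by hypothesis, it remains to show $\B=\thick\s$. By Lemma~\ref{lem:basic}(4), $\mathcal P\subseteq\s^\vee$ and $\mathcal I\subseteq\s^\wedge$. I would argue that every object of $\B$ lies in $\thick\s$ by a two-step reduction: first, the cotorsion-pair resolutions express an arbitrary $B$ via $\EE$-triangles with terms in $\s^\vee$ and $\s^\wedge$, so it suffices to show $\s^\vee\subseteq\thick\s$ and $\s^\wedge\subseteq\thick\s$; and this last containment is automatic from the very definitions of $\s^\vee$ and $\s^\wedge$ as iterated (co)cones of objects of $\s$, since $\thick\s$ is closed under cones and cocones of its own $\EE$-triangles. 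Hence $B\in\thick\s$ for all $B$, so $\B=\thick\s$ and $\s$ is silting. Alternatively, and perhaps more cleanly, one simply invokes the equivalence in \cite[Theorem~5.5]{AT} directly in this direction as well, since that theorem is phrased precisely as a biconditional.

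The main obstacle I expect is entirely on the ``only if'' side: going from the abstract statement ``$\B=\thick\s$'' to the existence of the two explicit cotorsion-pair $\EE$-triangles. This requires knowing that membership in $\thick\s$ forces a bounded resolution of the right shape — controlling the lengths and the direction of the resolving $\EE$-triangles — and this is exactly the content one must quote from \cite[Theorem~5.5]{AT} rather than reprove. Everything else (the $\Ext$-vanishing, the reduction $\s^\vee,\s^\wedge\subseteq\thick\s$) is formal and follows from Lemmas~\ref{lem1} and \ref{lem:basic}.
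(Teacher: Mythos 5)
Your proposal is essentially the paper's approach: the paper states Theorem~\ref{main2} with no proof, labeling it ``a direct consequence of \cite[Theorem~5.5]{AT},'' and you likewise defer the hard ``only if'' direction to that result while filling in the formal ``if'' direction correctly (the $\EE$-vanishing from Lemma~\ref{lem1}(1), the inclusion $\s^\vee,\s^\wedge\subseteq\thick\s$ by induction on the construction of $\s^\vee_i,\s^\wedge_i$ using that $\thick\s$ is closed under cones and cocones, and then the cotorsion-pair triangle forcing $B\in\thick\s$). Both routes rest on the same citation; your write-up simply makes explicit the elementary half that the paper leaves implicit.
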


\begin{rem}\label{Frob}
If $\B$ is a Frobenius (i.e. $\mathcal P=\mathcal I$) extriangulated category, by \cite[Corollary~7.4 and Remark~7.5]{NP}, $\B/[\mathcal P]$ is a triangulated category. In this case, by Lemma~\ref{lem1}, any silting subcategory of $\B$ contains $\mathcal P$. By \cite[Theorems~1.1 and 4.6]{MDZH}, $\s$ is a silting subcategory in $\B$ if and only if $\s/[\mathcal P]$ is a silting subcategory in $\B/[\mathcal P]$.
\end{rem}

We give some examples of silting subcategories.

\begin{exm}
\begin{itemize}
\item[(1)] If $\B$ is an abelian category, then any silting subcategory in $\B$ is a silting subcategory in $\mathrm{D}^b(\B)$. This is because on one hand, $\Ext^i_{\B}(\s,\s)=0$ implies $\Hom_{\mathrm{D}^b(\B)}(\s,\s[i])=0$ for any $i>0$; on the other hand, $\mathrm{D}^b(\B)=\thick\B=\thick\s$.
\item[(2)] Let $\Lambda$ be a finite dimensional $k$-algebra and $\mod \Lambda$ be the category of finitely generated $\Lambda$-modules. If $\gl \Lambda<\infty$, then the $n$-tilting modules in $\mod \Lambda$ are the silting objects (see Theorem~\ref{main4.4} for more general results). If $\gl \Lambda=\infty$, then by \cite[Example 2.5]{AI}, there is no silting subcategory in $\mathrm{D}^b(\Lambda)$, hence by (1), $\mod \Lambda$ has no silting subcategory.
\item[(3)] Let $A=\bigoplus_{i\geq 0}A_i$ be a finite dimensional positively graded $1$-Gorenstein algebra with $\gl A_0<\infty$. By \cite[Theorem 3.0.3]{LZ}, there is a silting object $T:=\bigoplus\limits_{i\geq 0}A(i)_{\leq 0}$ in $\mathrm{D}_{sg}(\mod^{\mathbb{Z}}A)$, where $(i)$ is the degree shift functor. Since $\mathrm{D}_{sg}(\mod^{\mathbb{Z}}A)\simeq \underline {\mathrm{CM}}^{\mathbb{Z}}(A)$, we have a silting object in $\underline {\mathrm{CM}}^{\mathbb{Z}}(A)$. By the discussion in Remark \ref{Frob}, we get a silting object in ${\mathrm{CM}}^{\mathbb{Z}}(A)$.

\end{itemize}
\end{exm}

\section{Silting reduction}\label{sec:red}

In this section, let $\W\subseteq \B$ be a presilting subcategory. We assume that $\W$ satisfies the following condition:
\begin{enumerate}
\item[(CP)] $(\W^{\vee},\W^{\bot})$ and $({^{\bot}}\W,\W^{\wedge})$ are cotorsion pairs in $\B$.
\end{enumerate}

\begin{rem}\label{rmk1}
By Lemma~\ref{lem:basic}~(4), condition (CP) implies $\mathcal P\subseteq\W^{\vee}$ and $\mathcal I\subseteq\W^{\wedge}$. So in this case, we have $\thick\mathcal P\subseteq \thick \W^{\vee}=\thick\W$ and $\thick\mathcal I\subseteq\thick\W^{\wedge}=\thick\W$.
\end{rem}

\begin{defn}
Let $X$ be an object of $\B$. The \emph{projective dimension} of $X$ is defined to be
$$\pd X:=\sup\{n\mid \EE^n(X,Y)\neq 0\text{ for some object $Y$ of $\B$}\}.$$
For any subcategory $\C$ of $\B$, the \emph{projective dimension} of $\C$ is defined to be
$$\pd\C:=\sup\{\pd X\mid X\in\C\}.$$
Dually, we can define the \emph{injective dimensions} $\id X$ and $\id\C$.
\end{defn}

\begin{rem}
If $\W$ is silting, condition ($\operatorname{CP}$) holds. If $\W$ is not silting, condition ($\operatorname{CP}$) implies that $\pd \B=\infty$ and $\id \B=\infty$. This is because if $\pd \B<\infty$ or $\id \B<\infty$, by Remark~\ref{rmk1}, we have either $\B=\thick \mathcal P\subseteq\thick \W$ or $\B=\thick \mathcal I\subseteq \thick \W$. So $\W$ is a silting subcategory, a contradiction.
\end{rem}

\begin{rem}
When $\B$ is a triangulated category, condition (CP) is equivalent to the conditions in \cite[Theorem~1.1]{IY} by \cite[Proposition~3.2]{IY}.
\end{rem}

\subsection{Reduction via subfactor categories}

Denote by $[\W](A,B)$ the subgroup of $\Hom_{\B}(A,B)$ consisting of the morphisms $f$ factoring through an object in $\W$. We denote by $\B/[\W]$ (or $\overline \B$ for short) the category which has the same objects as $\B$, and
$$\Hom_{\oB}(A,B)=\Hom_{\B}(A,B)/[\W](A,B)$$
for $A,B\in \B$. For any morphism $f\in \Hom_{\B}(A,B)$, we denote its image in $\Hom_{\oB}(A,B)$ by $\overline f$.

Let $\Z={^{\bot}}\W \cap\W^{\bot}$. Then $\Z$ is closed under extensions, and hence is an extriangulated subcategory in $\B$, whose $\EE$-triangles are those in $\B$ such that all terms are in $\Z$, see \cite[Remark 2.18]{NP}.

\begin{lem}\label{lem:Fro}
$\Z$ is a Frobenius extriangulated subcategory (that is, $\Z$ has enough projectives and enough injectives, and its projectives coincide with its injectives) in which $\W$ is the subcategory of projective-injective objects, which implies that $\Z/[\W]$ is a triangulated category.
\end{lem}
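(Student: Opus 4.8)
The plan is to verify that $\Z = {}^{\bot}\W \cap \W^{\bot}$ is a Frobenius extriangulated category by checking three things: that it has enough projectives and enough injectives, that $\W$ serves as both the projective and the injective objects of $\Z$, and then invoke \cite[Theorem 3.13]{ZZ} (cited in the introduction) to obtain the triangulated structure on $\Z/[\W]$. First I would record the elementary observations: $\W \subseteq \Z$ since $\W$ is presilting (so $\EE^i(\W,\W)=0$ for $i>0$), and $\W$ is closed under direct sums and direct summands, hence is a legitimate subcategory of the extriangulated category $\Z$. Moreover, every object of $\W$ is both projective and injective \emph{in} $\Z$: given an $\EE$-triangle $A \to B \to C \dashrightarrow$ with all terms in $\Z$ and $W \in \W$, the long exact sequence of Lemma~\ref{lem:long} applied to $\Hom_\B(W,-)$ has the relevant connecting term $\EE(W,A)$ sitting inside $\EE^1(W,A)=0$ (as $A \in \W^{\bot} \subseteq \W^{\bot_1}$... more precisely $A\in{}^\bot\W$ is not what we want; we use $W\in\W$, $A\in\W^\bot$ so $\EE(W,A)=0$), giving surjectivity of $\Hom_\B(W,B) \to \Hom_\B(W,C)$; the injective side is dual, using $A \in {}^{\bot}\W$.

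The heart of the argument is that $\Z$ has enough projectives and enough injectives, with the (co)resolving objects lying in $\W$. Here is where condition {\bf (CP)} enters. Take $Z \in \Z$. Since $({}^{\bot}\W, \W^{\wedge})$ is a cotorsion pair, there is an $\EE$-triangle
$$Z \to V \to U \dashrightarrow$$
with $U \in {}^{\bot}\W$ and $V \in \W^{\wedge}$; since $(\W^{\vee}, \W^{\bot})$ is a cotorsion pair, there is an $\EE$-triangle
$$V' \to U' \to Z \dashrightarrow$$
with $U' \in \W^{\vee}$ and $V' \in \W^{\bot}$. The issue is that $V, V'$ are a priori only in $\W^{\wedge}$ (resp.\ $\W^{\bot}$) and $U, U'$ only in ${}^{\bot}\W$ (resp.\ $\W^{\vee}$), not in $\Z$ or $\W$. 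I would argue that since $Z \in {}^{\bot}\W$ already and $U \in {}^{\bot}\W$, the $\EE$-triangle $Z \to V \to U$ forces $V \in {}^{\bot}\W$ as well (use the long exact sequence of Lemma~\ref{lem:long} for $\Hom_\B(-,\W)$ together with $\EE^{i}({}^\bot\W,\W)=0$); combined with $V \in \W^{\wedge}$, Lemma~\ref{lem1}(5) gives $V \in {}^{\bot}\W \cap \W^{\wedge} = \W$. Symmetrically $U' \in \W^{\vee} \cap \W^{\bot} = \W$ by Lemma~\ref{lem1}(5). Thus the two $\EE$-triangles read $Z \to W^0 \to U \dashrightarrow$ with $W^0 \in \W$, and $V' \to W_0 \to Z \dashrightarrow$ with $W_0 \in \W$. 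It remains to see that the third terms $U \in {}^\bot\W$ and $V' \in \W^\bot$ in fact lie in $\Z$: apply $\EE^i(-,\W)$ and $\EE^i(\W,-)$ long exact sequences to these triangles and use that $Z, W^0, W_0 \in \Z$ to deduce $U \in \W^\bot$ and $V' \in {}^\bot\W$. This yields genuine $\EE$-triangles inside $\Z$ exhibiting enough injectives (via $Z \to W^0 \to \Sigma_\Z Z \dashrightarrow$) and enough projectives (via $\Omega_\Z Z \to W_0 \to Z \dashrightarrow$), with $W^0, W_0 \in \W$.

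Finally, putting these together: $\W$ consists of objects that are projective-injective in $\Z$, and every object of $\Z$ admits a deflation from an object of $\W$ and an inflation into an object of $\W$; a standard argument (or directly: any projective object $P$ of $\Z$ is a summand of $W_0$ in the triangle $\Omega_\Z P \to W_0 \to P \dashrightarrow$, which splits, so $P \in \W$; dually for injectives) shows that $\W$ is \emph{exactly} the subcategory of projectives of $\Z$ and also exactly the subcategory of injectives. Hence $\Z$ is Frobenius with projective-injectives $\W$, and \cite[Theorem 3.13]{ZZ} (equivalently \cite[Corollary 7.4, Remark 7.5]{NP}, cf.\ Remark~\ref{Frob}) endows $\Z/[\W]$ with a triangulated structure. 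The main obstacle I anticipate is the bookkeeping in the previous paragraph: showing that the cotorsion-pair approximation triangles, whose middle and end terms live only in the larger subcategories $\W^\wedge, {}^\bot\W$, etc., can be ``upgraded'' so that the middle term lands in $\W$ and the end term lands back in $\Z$ — this requires a careful simultaneous use of Lemma~\ref{lem1}(1),(4),(5) and the long exact sequences, and is the only place where condition {\bf (CP)} (as opposed to $\W$ merely being presilting) is really used.
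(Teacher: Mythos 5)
Your proposal is correct and follows essentially the same route as the paper: use the cotorsion pair $({}^\bot\W,\W^\wedge)$ to produce $Z\to V\to Y\dashrightarrow$, upgrade $V$ to $\W$ via closure of ${}^\bot\W$ under extensions and the identity ${}^\bot\W\cap\W^\wedge=\W$, show $Y\in\Z$ via the long exact sequence together with $\EE^i(\W^\vee,\W^\bot)=0$, and dualize. (One minor slip: for injectivity of $W\in\W$ in $\Z$, the surjectivity of $\Hom(B,W)\to\Hom(A,W)$ needs $\EE(C,W)=0$, i.e.\ $C\in{}^\bot\W$, not $A\in{}^\bot\W$.)
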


\begin{proof}
Since $(\W^{\vee},\W^{\bot})$ and $({^{\bot}}\W,\W^{\wedge})$ are cotorsion pairs in $\B$, we have $\W=\W^{\vee}\cap\W^{\bot}={^{\bot}}\W\cap\W^{\wedge}$. Since $\EE(\W,\Z)=0$ and $\EE(\Z,\W)=0$, $\W$ is a subcategory of projective-injective objects in $\Z$. Let $Z\in \Z$. It admits an $\EE$-triangle
$$Z\to V\to Y\dashrightarrow$$
with $V\in \W^{\wedge}$ and $Y\in {^{\bot}}\W$. Since ${}^\perp\W$ is closed under extensions, we have $V\in {^{\bot}}\W\cap\W^{\wedge}=\W$.  By Lemma \ref{lem1} (4), we have $\EE^i(\W^{\vee},\W^\perp)=0$ for any $i>0$. Applying Lemma~\ref{lem:long} to the above $\EE$-triangle, we have an exact sequence
$$0=\EE(\W^{\vee},V)\to \EE(\W^{\vee},Y)\to \EE^2(\W^{\vee},Z)=0,$$
which implies $\EE(\W^{\vee},Y)=0$. Thus $Y\in \Z$. Hence $\Z$ has enough injectives $\W$. Dually, one can show that $\Z$ has enough projectives $\W$. So $\Z$ is a Frobenius category.

\end{proof}

By \cite[Theorem 3.13]{ZZ}, we have the following triangulated structure on $\Z/[\W]$.

\begin{lem}
$\Z/[\W]$ is a triangulated category, whose suspension functor $$\langle1\rangle: A\mapsto A\langle1\rangle,\ a\mapsto a\langle1\rangle$$ and distinguished triangles $$A\xrightarrow{\overline f} B\xrightarrow{\overline g} C\xrightarrow{\overline h} A\langle 1\rangle$$ are given by the following commutative diagram:
$$\xymatrix@C=0.8cm@R0.7cm{
A \ar[r]^f \ar@{=}[d] &B \ar[r]^g \ar[d] &C \ar[d]^h \ar@{-->}[r] &\\
A \ar[r] \ar[d]_a &W_A \ar[r] \ar[d] &A\langle 1\rangle \ar[d]^{a\langle 1\rangle}  \ar@{-->}[r] &\\
D \ar[r] &W_D \ar[r] &D\langle 1\rangle \ar@{-->}[r] &
}
$$
with $W_A,W_D\in \W$. Here $A\xrightarrow{f} B\xrightarrow{g} C\dashrightarrow$ is an arbitrary $\EE$-triangle in $\Z$ and $a:A\to D$ is an arbitrary morphism in $\Z$.
\end{lem}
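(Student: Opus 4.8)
The plan is to verify directly that the triangulated structure of Happel-type on the stable category of a Frobenius extriangulated category, as constructed in \cite[Theorem~3.13]{ZZ}, specializes to the description in the statement. By Lemma~\ref{lem:Fro}, $\Z={}^{\bot}\W\cap\W^{\bot}$ is a Frobenius extriangulated subcategory of $\B$ with $\W$ the subcategory of projective-injective objects, so \cite[Theorem~3.13]{ZZ} already provides that $\Z/[\W]$ is triangulated; the only thing left to do is to unwind the general recipe of that theorem in the present notation. Thus I would first recall how the suspension functor is built in the Frobenius setting: for $A\in\Z$, choose an $\EE$-triangle $A\to W_A\to A\langle1\rangle\dashrightarrow$ with $W_A\in\W$ (possible since $\Z$ has enough injectives, namely $\W$), and check that $A\langle1\rangle\in\Z$ and that the assignment is well defined on $\Z/[\W]$, independent of the choice of $W_A$ up to isomorphism in $\Z/[\W]$, and functorial: given $a\colon A\to D$ in $\Z$, the injectivity of $W_A$ relative to the inflation $A\to W_A$ (equivalently $\EE(A\langle1\rangle,W_D)$-arguments via Lemma~\ref{lem:long}) produces a morphism $W_A\to W_D$ and hence $a\langle1\rangle\colon A\langle1\rangle\to D\langle1\rangle$, unique in $\Z/[\W]$; the fact that $\W$ consists of injectives forces any two lifts to differ by a morphism factoring through $\W$. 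This is exactly the content of the lower two rows of the displayed diagram.

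Next I would address the distinguished triangles. Given an $\EE$-triangle $A\xrightarrow{f}B\xrightarrow{g}C\dashrightarrow$ in $\Z$, I form the commutative diagram of $\EE$-triangles obtained by comparing it with the defining $\EE$-triangle $A\to W_A\to A\langle1\rangle\dashrightarrow$ of $A\langle1\rangle$; the morphism $B\to W_A$ exists because $W_A$ is injective and $f$ is an inflation (here one uses $\EE(C,W_A)$-vanishing from Lemma~\ref{lem:long}, since $C\in\Z\subseteq{}^{\bot}\W$), and the induced map $C\to A\langle1\rangle$ is the connecting morphism $\overline h$. One then declares a triangle in $\Z/[\W]$ distinguished if it is isomorphic to one of the form $A\xrightarrow{\overline f}B\xrightarrow{\overline g}C\xrightarrow{\overline h}A\langle1\rangle$ arising this way. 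Verifying the axioms (TR1)--(TR4) is precisely what \cite[Theorem~3.13]{ZZ} does in the abstract Frobenius extriangulated setting, so I would simply cite it rather than redo the octahedron; the point of the present lemma is notational bookkeeping, confirming that $\langle1\rangle$ and the class of triangles coincide with those produced by the cited theorem once $\Z$ and $\W$ are plugged in.

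Finally, I would record the one genuinely load-bearing observation that makes the translation legitimate: the $\EE$-triangles of $\Z$, viewed as an extriangulated subcategory, are exactly those $\EE$-triangles of $\B$ all of whose terms lie in $\Z$ (by \cite[Remark~2.18]{NP}, since $\Z$ is extension-closed), and $\W\subseteq\Z$ is simultaneously projective and injective in $\Z$. These two facts guarantee that the constructions above never leave $\Z$ and that the morphism-lifting steps are valid. The main obstacle, such as it is, lies not in any deep argument but in checking well-definedness on $\Z/[\W]$ at each stage — that the connecting morphism $\overline h$, the suspension $a\langle1\rangle$, and the resulting triangle are all independent of the auxiliary choices of injective $\EE$-triangles — and this is handled uniformly by the injectivity of $\W$ in $\Z$ together with Lemma~\ref{lem:long}. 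Since all of this is subsumed by \cite[Theorem~3.13]{ZZ}, the proof is a short verification that the general statement, specialized, yields the displayed diagram.
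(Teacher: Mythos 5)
Your proposal matches the paper's approach exactly: the paper establishes in Lemma~\ref{lem:Fro} that $\Z$ is a Frobenius extriangulated category with projective-injectives $\W$, and then simply cites \cite[Theorem~3.13]{ZZ} to obtain the triangulated structure on $\Z/[\W]$, with the displayed diagram merely recording how that theorem's suspension functor and triangles look in this notation. Your additional well-definedness checks (e.g.\ that $C\in\Z\subseteq{}^{\perp}\W$ gives $\EE(C,W_A)=0$ so the map $B\to W_A$ exists) are sound, though the paper omits them as part of what \cite[Theorem~3.13]{ZZ} already supplies.
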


The following lemma is useful. The proof is left to the readers.

\begin{lem}\label{lem:iso}
$\Hom_{\oB}({}^\perp\W,\W^{\wedge})=0$ and $\Hom_{\oB}(\W^{\vee},\W^\perp)=0$.
\end{lem}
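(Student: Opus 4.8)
The plan is to prove the two vanishing statements symmetrically; by the duality between $({}^\perp\W,\W^\wedge)$ and $(\W^\vee,\W^\perp)$ it suffices to treat one of them, say $\Hom_{\oB}(\W^\vee,\W^\perp)=0$. So fix $X\in\W^\vee$ and $Y\in\W^\perp$ and a morphism $f\colon X\to Y$ in $\B$; the goal is to factor $f$ through an object of $\W$. I would argue by induction on the integer $i$ with $X\in\W^\vee_i$. The base case $i=0$ is trivial: then $X\in\W$ already, so $f$ factors through $\W$ tautologically.

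For the inductive step, suppose $X\in\W^\vee_i$ with $i\ge 1$, so by definition there is an $\EE$-triangle $X\xrightarrow{x} W\xrightarrow{y} X'\dashrightarrow$ with $W\in\W$ and $X'\in\W^\vee_{i-1}$. First I would push $f$ out along $x$: using Proposition~\ref{prop:NPLN} applied to this $\EE$-triangle and the morphism $f\colon X\to Y$, we obtain an $\EE$-triangle
$$X\xrightarrow{\svecv{-f}{x}} Y\oplus W\xrightarrow{\svech{u}{b}} X'\dashrightarrow,$$
together with a morphism of $\EE$-triangles whose middle component $b\colon W\to$ (the pushout) realizes $f$ as (a piece of) the commutative square. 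Concretely, the pushout $\EE$-triangle is $Y\xrightarrow{u} E\xrightarrow{v} X'\dashrightarrow$ with $f = (\text{component of } b)\circ x$ factoring through $E$ in a way compatible with the original triangle. Now the key point is that $X'\in\W^\vee_{i-1}\subseteq\W^\vee$, and by Lemma~\ref{lem1}(1) we have $\EE^j(\W^\vee,\W^\wedge)=0$ for all $j>0$; since $Y\in\W^\perp$, applying $\Hom_\B(X',-)$ to $Y\xrightarrow{u} E\xrightarrow{v} X'\dashrightarrow$ via Lemma~\ref{lem:long} and using $\EE(X',Y)=0$ shows that the deflation $v\colon E\to X'$ splits (the connecting map $\Hom_\B(X',X')\to\EE(X',Y)$ lands in zero, so $\id_{X'}$ lifts). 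Hence $E\cong Y\oplus X'$, and under this identification $u$ becomes the inclusion of $Y$ and $v$ the projection to $X'$.

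Tracing $f$ through this splitting, $f\colon X\to Y$ now factors as $X\to W\to E\cong Y\oplus X'\xrightarrow{\text{pr}} Y$ — but I must be careful: the composite $X\to W\to E$ need not be exactly the map through which $f$ factors, because the pushout square only tells us $u\circ f = b\circ x$ where $b$ is the middle map $W\to E$. Writing $b = \svecv{f'}{g'}$ under $E\cong Y\oplus X'$, the relation $u\circ f = b\circ x$ reads $f = f'\circ x$ with $f'\colon W\to Y$, i.e. $f$ factors through $W\in\W$, and we are done. Thus actually the induction collapses: the real content is just Proposition~\ref{prop:NPLN} plus the splitting coming from $\EE(\W^\vee,\W^\perp)=0$, and no iteration is needed once one observes $X'\in\W^\vee$ so that $\EE(X',Y)=0$ directly. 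The main obstacle I anticipate is bookkeeping: correctly reading off from the morphism of $\EE$-triangles in Proposition~\ref{prop:NPLN} that the equation satisfied is precisely $u\circ f = b\circ x$, and then identifying the $Y$-component of $b$ as the desired factorization of $f$ through $W$; this is where sign conventions and the precise form of the matrices $\svecv{-f}{x}$, $\svech{u}{b}$ must be handled with care. The dual statement $\Hom_{\oB}({}^\perp\W,\W^\wedge)=0$ follows by the same argument in the opposite extriangulated category, using the cotorsion pair $({}^\perp\W,\W^\wedge)$, the dual of Proposition~\ref{prop:NPLN}, and $\EE({}^\perp\W,\W^\wedge)=0$ from Lemma~\ref{lem1}(1).
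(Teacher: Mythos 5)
Your proof is correct, and you rightly observe partway through that no induction is actually needed: once one has an $\EE$-triangle $X\to W\to X'\dashrightarrow$ with $W\in\W$, $X'\in\W^\vee$, the vanishing $\EE(X',Y)=0$ does all the work. Two minor slips worth flagging: the $\EE$-triangle produced by Proposition~\ref{prop:NPLN} should end in the pushout object $E$, not in $X'$; and the vanishing $\EE(X',Y)=0$ is a consequence of Lemma~\ref{lem1}(4) (namely $\W^\perp=(\W^\vee)^\perp$), not of part~(1), since $Y$ is only assumed to lie in $\W^\perp$ and not necessarily in $\W^\wedge$.

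The paper leaves this lemma's proof to the reader, so there is no official argument to compare against, but there is a shorter route than the one you take. Rather than constructing the pushout via Proposition~\ref{prop:NPLN}, arguing that the bottom row splits, and reading $f$ off the left commuting square, one can simply apply $\Hom_\B(-,Y)$ to the $\EE$-triangle $X\xrightarrow{x}W\to X'\dashrightarrow$. By Lemma~\ref{lem:long} this gives the exact sequence
\[
\Hom_\B(W,Y)\xrightarrow{\ -\circ x\ }\Hom_\B(X,Y)\longrightarrow\EE(X',Y)=0,
\]
so $-\circ x$ is surjective and every $f\in\Hom_\B(X,Y)$ factors through $W\in\W$. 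Your pushout-and-splitting argument is in effect re-deriving this exactness of the Hom sequence by hand; the direct version buys the same conclusion with none of the bookkeeping about $E$ and its splitting that you anticipate as the main obstacle. The dual statement $\Hom_{\oB}({}^\perp\W,\W^\wedge)=0$ follows identically by applying $\Hom_\B(X,-)$ to an $\EE$-triangle $Y'\to W\to Y\dashrightarrow$ with $W\in\W$, $Y'\in\W^\wedge$, and using ${}^\perp\W={}^\perp(\W^\wedge)$ from Lemma~\ref{lem1}(4).
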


For any objects $X,Y\in \Z$, we denote by $\EE^i_\Z(X,Y)$ the $i$-th extension group of $X,Y$ in the extriangulated category $\Z$.

\begin{lem}\label{lem:formula}
For any $X,Y\in\Z$, we have the isomorphisms
$$\EE^{i}_\Z(X,Y)\cong \EE^{i}(X,Y),\ i\geq 1.$$
\end{lem}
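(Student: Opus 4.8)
The plan is to compare the two extriangulated structures by building the comparison degree by degree, using the Frobenius structure on $\Z$ established in Lemma~\ref{lem:Fro} together with the syzygy functors of $\B$.

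First I would treat the case $i=1$ separately, since there $\EE_\Z$ and $\EE$ essentially agree by the construction of the extriangulated substructure on $\Z$: by \cite[Remark 2.18]{NP} the $\EE$-triangles of $\Z$ are exactly those $\EE$-triangles of $\B$ all of whose terms lie in $\Z$, so there is a natural monomorphism $\EE_\Z(X,Y)\hookrightarrow\EE(X,Y)$. For surjectivity, given $\delta\in\EE(X,Y)$ with $X,Y\in\Z$, realize it by an $\EE$-triangle $Y\to B\to X\dashrightarrow$ in $\B$; since $\Z$ is closed under extensions (it is $\,{}^\perp\W\cap\W^\perp$, an intersection of subcategories each closed under extensions), the middle term $B$ lies in $\Z$, so the triangle already lives in $\Z$ and $\delta\in\EE_\Z(X,Y)$. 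Hence $\EE^1_\Z(X,Y)\cong\EE^1(X,Y)$.

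For $i\geq 2$ I would use dimension shifting on the $\Z$ side against ordinary syzygies on the $\B$ side. Since $\Z$ is Frobenius with projective-injective subcategory $\W$, for $X\in\Z$ there is an $\EE$-triangle $\Omega_\Z X\to W\to X\dashrightarrow$ in $\Z$ with $W\in\W$ and $\Omega_\Z X\in\Z$. Applying Lemma~\ref{lem:long} inside $\Z$ to this triangle, and using that $\W$ is injective in $\Z$ so that $\EE^j_\Z(W',\Omega_\Z X)$-type terms vanish appropriately, one gets $\EE^{i}_\Z(X,Y)\cong\EE^{i-1}_\Z(\Omega_\Z X,Y)$ for $i\geq 2$; iterating, $\EE^i_\Z(X,Y)\cong\EE^1_\Z(\Omega_\Z^{i-1}X,Y)$. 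On the $\B$ side, because $W\in\W\subseteq{}^\perp\W\cap\W^\perp$ has $\EE^{\geq 1}(W,\,\cdot\,)$ not automatically zero, I instead compare the triangle $\Omega_\Z X\to W\to X\dashrightarrow$ with an honest projective resolution triangle $\Omega X\to P\to X\dashrightarrow$ of $\B$; the octahedral-type axiom (or Proposition~\ref{prop:NPLN}) produces an $\EE$-triangle relating $\Omega_\Z X$, $\Omega X$ and a term built from $P$ and $W$, both of which have vanishing higher $\EE$ into $Y\in\W^\perp$. Feeding this into Lemma~\ref{lem:long} gives $\EE^{i}(X,Y)\cong\EE^{i-1}(\Omega_\Z X,Y)$ as well (the contributions from $P$ and $W$ die because $P\in\mathcal P$ and $W\in\W$, and $Y\in\W^\perp$ kills $\EE^{\geq1}(W,Y)$). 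Combining the two reductions and inducting on $i$, with the $i=1$ case as base, yields $\EE^i_\Z(X,Y)\cong\EE^i(X,Y)$ for all $i\geq1$, and I would check the isomorphism is the natural one so that it is compatible with long exact sequences.

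The main obstacle I anticipate is the $i\geq 2$ bookkeeping: one must ensure the dimension-shift isomorphism on the $\Z$ side and the one on the $\B$ side are induced by the \emph{same} connecting data, so that they can legitimately be composed. This amounts to choosing the syzygy $\Omega_\Z X$ inside $\Z$ and then observing (via \textbf{(WIC)} and the defining cotorsion pairs) that the map $W\to X$ is still a deflation in $\B$, so that $\Omega_\Z X$ is also \emph{a} choice of $\Omega X$ in $\B$ up to the usual ambiguity controlled by Lemma~\ref{lem:high}; once that identification is in place, both long exact sequences are instances of Lemma~\ref{lem:long} applied to one and the same $\EE$-triangle, and the terms involving $W$ vanish on both sides precisely because $Y\in\Z\subseteq\W^\perp$. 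The rest is a routine induction.
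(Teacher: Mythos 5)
Your core plan is the same as the paper's: exploit the Frobenius structure of $\Z$ from Lemma~\ref{lem:Fro} to produce syzygy triangles $X_j\to W_j\to X_{j-1}\dashrightarrow$ in $\Z$ with $W_j\in\W$ and $X_0=X$, observe that $\EE_\Z(X_{i-1},Y)=\EE(X_{i-1},Y)$ because $\Z$ is extension-closed, and then transport this to $\EE^i$ via long exact sequences. But the detour you take through the $\B$-syzygy $\Omega X$ is both unnecessary and partly incorrect. The claim that ``$\Omega_\Z X$ is also a choice of $\Omega X$ in $\B$ up to the usual ambiguity controlled by Lemma~\ref{lem:high}'' is false: $W$ is projective-injective in $\Z$ but not projective in $\B$, and Lemma~\ref{lem:high} only governs the well-definedness of syzygies taken with respect to $\mathcal P$, so that lemma cannot be used to identify $\Omega_\Z X$ with $\Omega X$.

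The clean route, which the paper takes, is to skip the comparison with $\Omega X$ entirely. The triangles $X_j\to W_j\to X_{j-1}\dashrightarrow$ already live in $\B$, so one can apply Lemma~\ref{lem:long} in $\B$ directly to them. Since $W_j\in\W$ and $Y\in\Z\subseteq\W^{\bot}$, we have $\EE^k(W_j,Y)=0$ for all $k\geq 1$, hence the long exact sequence gives $\EE^{i-j}(X_j,Y)\cong\EE^{i-j+1}(X_{j-1},Y)$ for $1\leq j<i$; chaining these yields $\EE^i(X,Y)\cong\EE(X_{i-1},Y)=\EE_\Z(X_{i-1},Y)=\EE_\Z^i(X,Y)$. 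Your octahedral comparison with $\Omega X\to P\to X\dashrightarrow$ would eventually also work, precisely because you correctly identify the vanishing $\EE^{\geq 1}(W,Y)=0$; but once you have that vanishing in hand it already lets you do the dimension shift on the $\W$-resolution in $\B$ directly, so the extra triangle involving $P$ and $\Omega X$ buys you nothing.
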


\begin{proof}
By Lemma~\ref{lem:Fro}, there are $\EE$-triangles in $\Z$:
$$X_j\to W_j\to X_{j-1}\dashrightarrow,\ j\geq 1$$
with all $W_j\in\W$ and $X_0=X$. By definition, we have $\EE_\Z^{i}(X,Y)=\EE_\Z(X_{i-1},Y)=\EE(X_{i-1},Y)$ for any $i\geq 1$. When $i=1$, we already have the required formula. Suppose that $i>1$. Applying Lemma~\ref{lem:long} to the above triangle, we have the following exact sequences
$$0=\EE^{i-j}(W_j,Y)\to\EE^{i-j}(X_j,Y)\to\EE^{i-j+1}(X_{j-1},Y)\to\EE^{i-j+1}(W_j,Y)=0,\ 1\leq j< i.$$
So we have isomorphisms $\EE^i(X_0,Y)\cong\EE^{i-1}(X_1,Y)\cong\EE^{i-2}(X_2,Y)\cong\cdots\cong\EE(X_{i-1},Y)$. Hence we have $\EE^{i}_\Z(X,Y)\cong \EE^{i}(X,Y)$ as required.
\end{proof}

As a direct consequence of the formulas in Lemma~\ref{lem:formula}, we have the following correspondence between presilting subcategories of $\B$ containing $\W$ and presilting subcategories of $\Z$.

\begin{lem}\label{psilz}
Let $\s\supseteq \W$. Then $\s$ is a presilting subcategory in $\B$ if and only if $\s$ is a presilting subcategory in $\Z$.
\end{lem}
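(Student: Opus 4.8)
The plan is to derive this directly from the extension-group comparison of Lemma~\ref{lem:formula}, which identifies $\EE^i_\Z(X,Y)$ with $\EE^i(X,Y)$ for all $X,Y\in\Z$ and $i\geq 1$. The only thing to check before invoking that lemma is that any presilting subcategory of $\B$ containing $\W$ actually lies inside $\Z={}^{\bot}\W\cap\W^{\bot}$, so that the comparison applies to its objects.

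First I would treat the forward direction. Assuming $\s\supseteq\W$ is presilting in $\B$, one has $\EE^i(\s,\s)=0$ for all $i\geq 1$; since $\W\subseteq\s$, this gives $\EE^i(S,\W)=0=\EE^i(\W,S)$ for every $S\in\s$ and all $i>0$, i.e. $S\in{}^{\bot}\W\cap\W^{\bot}=\Z$. Hence $\s$ is a subcategory of $\Z$ (the closure properties under isomorphisms, direct sums and direct summands are inherited from $\B$, and $\Z$ is full in $\B$). Now for $S,S'\in\s\subseteq\Z$, Lemma~\ref{lem:formula} yields $\EE^i_\Z(S,S')\cong\EE^i(S,S')=0$ for all $i\geq 1$, so $\s$ is presilting in $\Z$.

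For the converse I would argue symmetrically: if $\s\supseteq\W$ is presilting in $\Z$ then $\s$ is in particular a subcategory of $\Z$, so for $S,S'\in\s$ Lemma~\ref{lem:formula} gives $\EE^i(S,S')\cong\EE^i_\Z(S,S')=0$ for all $i\geq 1$; thus $\EE^i(\s,\s)=0$ and $\s$ is presilting in $\B$.

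I do not expect a genuine obstacle here: all the real work is already packaged in Lemma~\ref{lem:formula} (which in turn rests on the Frobenius structure of $\Z$ from Lemma~\ref{lem:Fro} and the long exact sequences of Lemma~\ref{lem:long}), and the only new observation is that being presilting in $\B$ together with $\W\subseteq\s$ automatically forces $\s\subseteq\Z$. The sole point requiring a moment's care is that ``presilting'' is stated for subcategories in our standing sense (full, closed under isomorphisms, direct sums and direct summands), but these conditions pass to $\s$ regardless of which ambient category we view $\s$ in, so no extra argument is needed.
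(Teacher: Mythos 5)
Your proof is correct and is exactly the argument the paper has in mind: the paper states Lemma~\ref{psilz} as a ``direct consequence'' of Lemma~\ref{lem:formula} without spelling anything out, and what you wrote is precisely the spelled-out version. The one detail you rightly flag as needing a sentence — that $\W\subseteq\s$ and $\EE^i(\s,\s)=0$ together force $\s\subseteq\Z$, so that the formula of Lemma~\ref{lem:formula} applies — is indeed the only substantive observation beyond quoting that lemma.
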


We prove that this correspondence can be restricted to silting subcategories.

\begin{prop}\label{silz}
Let $\s\supseteq \W$ be a presilting subcategory. Then $\s$ is a silting subcategory if and only if it is also a silting subcategory in $\Z$.
\end{prop}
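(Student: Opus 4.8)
The plan is to use Theorem~\ref{main2} as the characterization of silting on both sides: $\s$ is silting in $\B$ iff $(\s^\vee,\s^\wedge)$ is a cotorsion pair in $\B$, and $\s$ is silting in $\Z$ iff $(\s^\vee_\Z,\s^\wedge_\Z)$ is a cotorsion pair in $\Z$, where $\s^\vee_\Z,\s^\wedge_\Z$ denote the corresponding subcategories computed inside the extriangulated category $\Z$. By Lemma~\ref{psilz} we already know that $\s$ is presilting in $\B$ iff it is presilting in $\Z$, so in either direction we may assume $\s$ is presilting and only need to transfer the cotorsion-pair property. The first step is therefore to understand the relationship between the $\vee$/$\wedge$-closures taken in $\B$ and those taken in $\Z$; since $\W\subseteq\s$ and $\W$ is the class of projective-injectives of the Frobenius category $\Z$, the cocones/cones used to build $\s^\vee_\Z$ and $\s^\wedge_\Z$ are precisely those $\EE$-triangles of $\B$ all of whose terms lie in $\Z$. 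I would show that $\s^\vee\cap\Z=\s^\vee_\Z$ and $\s^\wedge\cap\Z=\s^\wedge_\Z$ (or at least the inclusions needed), using that $\Z={}^\perp\W\cap\W^\perp$ is closed under extensions, cones of deflations between its objects, etc., together with Lemma~\ref{lem1} applied to $\s$ (note $\s^\vee\subseteq\W^\perp$ since $\s\supseteq\W$ and dually, giving a priori containment of one-sided resolutions inside the relevant $\perp$-classes).

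Next, for the forward direction, assume $\s$ is silting in $\B$, i.e. $(\s^\vee,\s^\wedge)$ is a cotorsion pair in $\B$; equivalently $\B=\thick\s$. Since $\W\subseteq\s$, Remark~\ref{rmk1} gives $\mathcal P\cup\mathcal I\subseteq\thick\W\subseteq\thick\s=\B$, so Lemma~\ref{main2cor} is not quite the right tool (that is for $\thick\s\ne\B$), but the point is that $\Z$ is a Frobenius extriangulated category with $\add(\text{its proj-inj})=\W\subseteq\s$, and one checks directly that $\thick_\Z\s=\Z$: any $Z\in\Z$ sits, by Lemma~\ref{lem:Fro}, in $\EE$-triangles with $\W$-terms building a coresolution in $\Z$, and the $\B$-side resolution of $Z$ by $\s^\vee,\s^\wedge$ can be pushed into $\Z$ using the closure properties established in the first step; alternatively, one verifies the cotorsion-pair axioms for $(\s^\vee_\Z,\s^\wedge_\Z)$ in $\Z$ directly: $\EE_\Z(\s^\vee_\Z,\s^\wedge_\Z)=0$ follows from Lemma~\ref{lem:formula} and Lemma~\ref{lem1}(1), and the existence of the two $\EE_\Z$-triangles resolving an arbitrary $Z\in\Z$ is obtained by taking the $\B$-resolutions from the cotorsion pair $(\s^\vee,\s^\wedge)$, intersecting/modifying them to land in $\Z$ (here is where one uses that $\Z$ is closed under the relevant cones/cocones and that $\W$-objects are available to correct the terms, exactly as in the proof of Lemma~\ref{lem:Fro}). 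Then Theorem~\ref{main2}, applied inside $\Z$, gives that $\s$ is silting in $\Z$.

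For the converse, assume $\s$ is silting in $\Z$, so $(\s^\vee_\Z,\s^\wedge_\Z)$ is a cotorsion pair in $\Z$; I must produce, for an arbitrary $B\in\B$, the two $\EE$-triangles of Definition~\ref{cotorsion} with outer terms in $\s^\wedge$ and $\s^\vee$. The idea is a two-stage reduction: first use the cotorsion pair $({}^\perp\W,\W^\wedge)$ (resp. $(\W^\vee,\W^\perp)$) from condition (CP) to build an $\EE$-triangle $B\to V^B\to U^B\dashrightarrow$ with $V^B\in\W^\wedge$ and $U^B\in{}^\perp\W$, and similarly resolve to push $B$ into $\Z$ up to $\W^\vee/\W^\wedge$-terms; then apply the $\Z$-cotorsion pair $(\s^\vee_\Z,\s^\wedge_\Z)$ to the $\Z$-component and splice, using the octahedron-type axiom (ET4) for extriangulated categories, observing $\W^\vee\subseteq\s^\vee$ and $\W^\wedge\subseteq\s^\wedge$ so that the assembled outer terms remain in $\s^\vee$, $\s^\wedge$ (closure of $\s^\vee$ under cocones/extensions and $\s^\wedge$ under cones/extensions, Lemma~\ref{lem1}(2),(3)). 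Then Theorem~\ref{main2} gives $\s$ silting in $\B$. I expect the main obstacle to be this last splicing step: carefully arranging the two resolutions (the (CP)-resolution carrying $B$ into $\Z$, and the $\Z$-silting resolution of the $\Z$-part) and invoking (ET4)/(ET4$^{\rm op}$) repeatedly while tracking that every intermediate term stays in the correct closure class; the bookkeeping with $\W^\vee$ versus $\s^\vee$ (and the $\wedge$ duals) is where one must be most careful, and is also where condition (CP) is genuinely used rather than just presilting-ness of $\W$.
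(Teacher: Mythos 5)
Your overall strategy is right, and one of the two directions you sketch matches the paper's proof quite closely. For ``$\s$ silting in $\Z$ implies $\s$ silting in $\B$,'' the paper does exactly what you propose: for $B\in\B$, first use the cotorsion pairs of condition {\rm (CP)} to build a two-step approximation landing in $\Z$ (with $\W^{\wedge}$- and $\W^{\vee}$-error terms), then resolve the $\Z$-part using the $\Z$-cotorsion pair provided by Theorem~\ref{main2} and splice the diagrams, using that $\W^{\vee}\subseteq\s^{\vee}$ and $\W^{\wedge}\subseteq\s^{\wedge}$. That part of your sketch is essentially the paper's argument.

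The gap is in the other direction, ``$\s$ silting in $\B$ implies $\s$ silting in $\Z$.'' Your plan is to establish $\s^{\vee}\cap\Z=\s^{\vee}_{\Z}$, $\s^{\wedge}\cap\Z=\s^{\wedge}_{\Z}$, and then verify the cotorsion-pair axioms for $(\s^{\vee}_{\Z},\s^{\wedge}_{\Z})$ directly by ``pushing the $\B$-resolutions into $\Z$.'' But the $\B$-cotorsion-pair $\EE$-triangle $V_Z\to U_Z\to Z\dashrightarrow$ for $Z\in\Z$ has $U_Z\in\s^{\vee}$ and $V_Z\in\s^{\wedge}$ with \emph{no reason} for $U_Z$ or $V_Z$ to lie in $\Z$: one has $\s^{\vee}\subseteq{}^{\perp}\W$ and $\s^{\wedge}\subseteq\W^{\perp}$, but not the other containments, so the triangle is not a $\Z$-triangle and the intersection identities do not apply to it. This is precisely what the paper has to work around, and it does so in two non-obvious steps that your sketch does not anticipate. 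First, it proves the special case that if $U_Z\in\s$ then $V_Z\in\Z$, by splicing with a $\Z$-triangle $Z'\to W\to Z\dashrightarrow$ ($W\in\W$) and using $\EE(W,V_Z)=0$ to make $V_Z$ a direct summand of an extension of $Z'$ by $S$, hence in $\Z$; this gives the containment $\s^{\wedge}\cap\Z\subseteq\s^{\wedge}_{\Z}$ you want. Second, and this is the missing idea, it runs an induction on the $\s^{\vee}$-length of $U_Z$: writing $U_Z\to S_0\to U_Z^0\dashrightarrow$ with $S_0\in\s$ and $U_Z^0\in\s^{\vee}_{k}$, it shifts inside the Frobenius category $\Z$ via $Z\to W\to Z\langle1\rangle\dashrightarrow$ to produce a $\B$-triangle $V_Z^0\to W\oplus U_Z^0\to Z\langle1\rangle\dashrightarrow$ whose middle term has strictly shorter $\s^{\vee}$-length, concludes that $Z\langle1\rangle\in\thick_{\Z}\s$ by induction, and then pulls back along the $\Z$-triangle to get $Z\in\thick_{\Z}\s$. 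Neither ``$\Z$ is closed under the relevant cones/cocones'' nor ``$\W$-objects correct the terms'' captures this degree-lowering trick; without it the cotorsion-pair verification in $\Z$ does not go through, so you would need to supply this induction to close the gap.

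One smaller remark: the paper packages the two directions as separate lemmas, and for ``$\B\Rightarrow\Z$'' it does not verify the $\Z$-cotorsion-pair axioms directly as you suggest, but instead shows $\Z=\thick_{\Z}\s$ by the induction above, then invokes Lemma~\ref{psilz}. Either formulation is fine, but the inductive mechanism is indispensable in both.
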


We divide the proof into two parts.

\begin{lem}
Let $\s\supseteq \W$ be a presilting subcategory. Then if $\s$ is a silting subcategory in $\Z$, it is also a silting subcategory in $\B$.
\end{lem}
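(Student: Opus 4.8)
The plan is to reduce everything to the statement $\B=\thick\s$, where $\thick\s$ denotes the smallest thick subcategory of $\B$ containing $\s$. Indeed, $\s\supseteq\W$ is presilting in $\Z$, hence presilting in $\B$ by Lemma~\ref{psilz}, so once $\B=\thick\s$ is established, $\s$ is silting in $\B$ by Definition~\ref{def:sil}. (Note that from the hypothesis ``$\s$ is silting in $\Z$'' we shall only use that $\s$ is presilting in $\Z$ and that $\Z$ is the smallest thick subcategory of $\Z$ containing $\s$.)

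I would first record two observations. \emph{First}, $\W^{\vee}\cup\W^{\wedge}\subseteq\thick\s$: since $\W\subseteq\s\subseteq\thick\s$, each $\W^{\wedge}_i$ (resp.\ $\W^{\vee}_i$) is obtained from $\W$ by iterated cones (resp.\ cocones) along objects already lying in $\thick\s$, so thickness gives $\W^{\wedge}=\bigcup_i\W^{\wedge}_i\subseteq\thick\s$ and $\W^{\vee}=\bigcup_i\W^{\vee}_i\subseteq\thick\s$. \emph{Second}, $\Z\subseteq\thick\s$: because the $\EE$-triangles of $\Z$ are exactly the $\EE$-triangles of $\B$ with all three terms in $\Z$ (\cite[Remark~2.18]{NP}), the class $\thick\s\cap\Z$ is a thick subcategory of $\Z$ that contains $\s$; since $\Z$ is the smallest such, $\Z=\thick\s\cap\Z\subseteq\thick\s$.

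The core of the argument is then a two-step resolution of an arbitrary $B\in\B$ using condition (CP). Using the $\CoCone$ half of the cotorsion pair $({^{\bot}}\W,\W^{\wedge})$, take an $\EE$-triangle $B\to V\to U\dashrightarrow$ with $V\in\W^{\wedge}$ and $U\in{^{\bot}}\W$; as $V\in\thick\s$ by the first observation, thickness reduces the problem to showing $U\in\thick\s$, so I may assume $B\in{^{\bot}}\W$ from the outset. Now, using the $\CoCone$ half of the cotorsion pair $(\W^{\vee},\W^{\bot})$, take an $\EE$-triangle $B\to V'\to U'\dashrightarrow$ with $V'\in\W^{\bot}$ and $U'\in\W^{\vee}$. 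The decisive point is that $V'\in\Z$: by Lemma~\ref{lem1}(1) and (4) we have $\W^{\vee}\subseteq{^{\bot}}(\W^{\wedge})={^{\bot}}\W$, so $\EE^{i}(U',\W)=0$ for all $i\geq1$; feeding this together with $\EE^{i}(B,\W)=0$ (valid since $B\in{^{\bot}}\W$) into the long exact sequence of Lemma~\ref{lem:long} attached to $B\to V'\to U'\dashrightarrow$ yields $\EE^{i}(V',\W)=0$ for $i\geq1$, i.e.\ $V'\in{^{\bot}}\W$, hence $V'\in{^{\bot}}\W\cap\W^{\bot}=\Z$. Then $U'\in\thick\s$ by the first observation, $V'\in\thick\s$ by the second, so $B\in\thick\s$ by thickness. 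This gives $\B=\thick\s$, and therefore $\s$ is silting in $\B$.

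The only genuinely non-formal ingredients are the second observation — transporting the thick generation of $\Z$ by $\s$ up to $\B$ — and the realization that the two (CP)-resolutions can be arranged so that the leftover term $V'$ lands precisely inside $\Z$; this last point is exactly what the inclusion $\W^{\vee}\subseteq{^{\bot}}\W$ provides. Everything else is routine bookkeeping with the long exact sequences of Lemma~\ref{lem:long}, so I expect the verification that $V'\in\Z$ to be the main (and rather modest) obstacle.
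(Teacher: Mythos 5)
Your proof is correct, and it takes a cleaner route than the paper's. The paper invokes Theorem~\ref{main2} to extract the silting cotorsion pair $(\U,\V)=(\s^{\vee}_{\Z},\s^{\wedge}_{\Z})$ in $\Z$ and then carries out several explicit $3\times3$-diagram constructions that resolve an arbitrary $B$ directly into triangles whose end terms lie in $\s^{\vee}$ and $\s^{\wedge}$. You instead separate the argument into two modular pieces: an abstract transport step showing $\Z=\thick_{\Z}\s$ implies $\Z\subseteq\thick_{\B}\s$ (using only that the $\E$-triangles of $\Z$ are those of $\B$ with all terms in $\Z$, so $\thick_{\B}\s\cap\Z$ is thick in $\Z$), and then a two-step (CP)-resolution of $B$ ending in $\W^{\vee}$, $\W^{\wedge}$ and $\Z$, all of which you have already placed inside $\thick_{\B}\s$. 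The small lemma that makes this work --- that the middle term $V'$ of the second resolution lands in $\Z$ because $\W^{\vee}\subseteq{^{\bot}}\W$ and ${^{\bot}}\W$ is closed under cocones --- plays the same role as the paper's observation that $Z^B\in\Z$, but you use it to conclude immediately rather than as a launching point for further diagrams. The trade-off is that the paper's computation exhibits an explicit finite chain of $\E$-triangles from $\s$ to $B$, which your argument does not, but yours is shorter and isolates the one genuinely non-formal ingredient (transporting thick generation from $\Z$ to $\B$) from the routine long-exact-sequence bookkeeping.
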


\begin{proof}
By Lemma~\ref{psilz}, we only need to show that if $\s$ is a silting subcategory in $\Z$, then $\thick\s=\B$.

Since $\s$ is a silting subcategory in $\Z$, by Theorem~\ref{main2}, there is a cotorsion pair $(\U,\V)$ in $\Z$ such that $\U\subseteq \s^{\vee}$ and $\V\subseteq \s^{\wedge}$. For any object $B\in \B$, it admits a commutative diagram
$$\xymatrix@C=0.8cm@R0.7cm{
V_B \ar[r] \ar@{=}[d] &T_B\ar[r] \ar[d] &B \ar[d] \ar@{-->}[r] &\\
V_B \ar[r] &Z^B \ar[r] \ar[d] &C \ar[d] \ar@{-->}[r] &\\
&U^B \ar@{=}[r] \ar@{-->}[d]  &U^B \ar@{-->}[d]\\
&&&
}
$$
where $T_B\in {^{\bot}}\W$, $V_B\in \W^{\wedge}$, $U^B\in \W^{\vee}$ and $Z^B\in \Z$. Since $Z^B$ admits an $\EE$-triangle $$V_Z'\to U_Z'\to Z^B\dashrightarrow$$ where $V_Z'\in \V$ and $U_Z'\in \U$, we have a commutative diagram
$$\xymatrix@C=0.8cm@R0.7cm{
V_Z' \ar@{=}[r] \ar[d] &V_Z' \ar[d]\\
U_T \ar[r] \ar[d] &U_Z' \ar[d] \ar[r] &U^B \ar@{=}[d] \ar@{-->}[r] &\\
T_B \ar[r] \ar@{-->}[d] &Z^B \ar[r] \ar@{-->}[d] &U^B \ar@{-->}[r] &\\
&&&
}
$$
with $U_T\in \s^{\vee}$. Then we can get a commutative diagram
$$\xymatrix@C=0.8cm@R0.7cm{
V_Z' \ar@{=}[r] \ar[d] &V_Z' \ar[d]\\
V_T \ar[r] \ar[d] &U_T \ar[r] \ar[d] &B\ar@{=}[d]  \ar@{-->}[r] &\\
V_B \ar[r] \ar@{-->}[d] &T_B\ar[r] \ar@{-->}[d] &B \ar@{-->}[r] &\\
&&&
}
$$
with $V_T\in \s^{\wedge}$. Hence $B\in \thick \s$.
\end{proof}

\begin{lem}
Let $\s\supseteq \W$ be a presilting subcategory. Then if $\s$ is a silting subcategory in $\B$, it is also a silting subcategory in $\Z$.
\end{lem}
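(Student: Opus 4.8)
The conditions for $\s$ to be silting in $\Z$ (Definition~\ref{def:sil}) are that $\s$ is presilting in $\Z$ and that $\Z=\mathsf{thick}_{\Z}\s$, where $\mathsf{thick}_{\Z}\s$ denotes the smallest thick subcategory of the extriangulated category $\Z$ containing $\s$. The presilting part is immediate from Lemma~\ref{psilz}, so the whole content is the equality $\Z=\mathsf{thick}_{\Z}\s$. First I would record two preliminary facts. Applying Lemma~\ref{lem1}(1) to the presilting subcategory $\s$ and using $\W\subseteq\s\subseteq\s^{\vee}\cap\s^{\wedge}$ gives $\s^{\vee}\subseteq{}^{\bot}\W$ and $\s^{\wedge}\subseteq\W^{\bot}$, hence in particular $\s\subseteq{}^{\bot}\W\cap\W^{\bot}=\Z$. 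Secondly, since $\s$ is silting in $\B$, Theorem~\ref{main2} says that $(\s^{\vee},\s^{\wedge})$ is a cotorsion pair in $\B$.

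The key step is a transfer principle: \emph{if $A\to B\to C\dashrightarrow$ is an $\EE$-triangle of $\B$ in which two of the three terms lie in $\Z$ and the remaining term lies in $\s^{\vee}$ or in $\s^{\wedge}$, then all three terms lie in $\Z$}, so that the triangle is an $\EE$-triangle of $\Z$. Indeed, by the preliminary facts a term in $\s^{\vee}$ automatically lies in ${}^{\bot}\W$ and a term in $\s^{\wedge}$ automatically lies in $\W^{\bot}$; the remaining orthogonality needed for that term to lie in $\Z={}^{\bot}\W\cap\W^{\bot}$ is then forced by applying the long exact sequences of Lemma~\ref{lem:long} to the triangle in the variables $\EE^{i}(W,-)$ and $\EE^{i}(-,W)$, $W\in\W$, together with the fact that the other two terms are already in $\Z$.

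With this in hand, I would prove $\s^{\vee}\cap\Z\subseteq\mathsf{thick}_{\Z}\s$ and $\s^{\wedge}\cap\Z\subseteq\mathsf{thick}_{\Z}\s$ by induction on the $\vee$-, respectively $\wedge$-, length computed in $\B$. The base case is $\s\subseteq\mathsf{thick}_{\Z}\s$. For the step, given $Y\in\s^{\vee}_{n}\cap\Z$ choose an $\EE$-triangle $Y\to S\to D\dashrightarrow$ of $\B$ with $S\in\s$ and $D\in\s^{\vee}_{n-1}$; the transfer principle gives $D\in\Z$, so $D\in\s^{\vee}_{n-1}\cap\Z\subseteq\mathsf{thick}_{\Z}\s$ by the inductive hypothesis and the triangle is an $\EE$-triangle of $\Z$, whence $Y\in\mathsf{thick}_{\Z}\s$ by the two-out-of-three property of the thick subcategory $\mathsf{thick}_{\Z}\s$ of $\Z$; the $\s^{\wedge}$-case is dual. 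Finally, for an arbitrary $Z\in\Z$ the cotorsion pair $(\s^{\vee},\s^{\wedge})$ supplies an $\EE$-triangle $Z\to V\to U\dashrightarrow$ of $\B$ with $V\in\s^{\wedge}$ and $U\in\s^{\vee}$; by the transfer principle this is an $\EE$-triangle of $\Z$ with $V\in\s^{\wedge}\cap\Z\subseteq\mathsf{thick}_{\Z}\s$ and $U\in\s^{\vee}\cap\Z\subseteq\mathsf{thick}_{\Z}\s$, so $Z\in\mathsf{thick}_{\Z}\s$. Hence $\Z=\mathsf{thick}_{\Z}\s$, and $\s$ is silting in $\Z$.

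The step needing the most care is the induction: it must be run on the $\vee/\wedge$-length as measured in $\B$, so that the defining $\EE$-triangles of the filtrations $\s^{\vee}_{\bullet}$ and $\s^{\wedge}_{\bullet}$ are available, with the transfer principle then guaranteeing both that each such triangle lives in $\Z$ and that its third term still sits at a strictly lower length. Equivalently, the same induction shows $\s^{\vee}\cap\Z=\s^{\vee_{\Z}}$ and $\s^{\wedge}\cap\Z=\s^{\wedge_{\Z}}$, so that $(\s^{\vee_{\Z}},\s^{\wedge_{\Z}})$ is a cotorsion pair in $\Z$ and one may instead conclude by applying Theorem~\ref{main2} inside $\Z$; the two formulations are interchangeable.
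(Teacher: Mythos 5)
Your strategy — use the cotorsion pair $(\s^{\vee},\s^{\wedge})$ from Theorem~\ref{main2}, a lemma carrying certain $\B$-triangles into $\Z$, and separate inductions on $\vee$- and $\wedge$-length — is sound and in fact cleaner than the paper's. The paper only proves the membership claim for $\s^{\wedge}$-covers ($V_Z\to S\to Z$ with $Z\in\Z$, $S\in\s$, $V_Z\in\s^{\wedge}$), and then finishes by a different induction, on the $\s^{\vee}$-length of the $U_Z$ in the cotorsion decomposition $V_Z\to U_Z\to Z\dashrightarrow$ of $Z$, passing through $Z\langle1\rangle$ via the triangulated shift on $\Z/[\W]$. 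Your symmetric treatment of $\s^\vee\cap\Z$ and $\s^\wedge\cap\Z$ avoids the detour through the suspension.

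There is, however, a real imprecision. Your ``transfer principle'' is not true in the generality stated, and its last invocation is not an instance of it. For a triangle $A\to B\to C\dashrightarrow$, the long exact sequence of Lemma~\ref{lem:long} only kills the degree-one obstruction when the unknown term sits in a favorable position: if $C\in\s^{\vee}$ with $A,B\in\Z$ then $\EE^i(W,B)\to\EE^i(W,C)\to\EE^{i+1}(W,A)$ gives $C\in\W^{\perp}$ for all $i\ge1$, and dually $A\in\s^{\wedge}$ with $B,C\in\Z$ lands in ${}^{\perp}\W$; but if $A\in\s^{\vee}$ with $B,C\in\Z$, the sequence at $i=1$ reads $\Hom(W,C)\to\EE(W,A)\to\EE(W,B)=0$, which only exhibits $\EE(W,A)$ as a cokernel, not zero (and symmetrically for $C\in\s^{\wedge}$ with $A,B\in\Z$). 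So the blanket statement ``two terms in $\Z$, the third in $\s^{\vee}$ or $\s^{\wedge}$ $\Rightarrow$ all in $\Z$'' is false in two of six configurations. Your two inductive steps happen to land in the good configurations (the $\s^{\vee}$-term at the cone, the $\s^{\wedge}$-term at the cocone), so they are fine as written. But the closing step applies the principle to $Z\to V\to U\dashrightarrow$, in which only one term, $Z$, is known to lie in $\Z$ — this simply is not covered by the stated hypothesis. It does work, but by a mixed argument you need to make explicit: from $U\in\s^{\vee}\subseteq{}^{\perp}\W$ and $Z\in{}^{\perp}\W$, the sequence $\EE^i(U,\W)\to\EE^i(V,\W)\to\EE^i(Z,\W)$ forces $V\in{}^{\perp}\W$, hence $V\in\Z$; with $Z,V\in\Z$ and $U\in\s^{\vee}$ at the cone, the good case of the transfer then gives $U\in\Z$. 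With the transfer restricted to the two valid configurations and the final triangle handled in this two-step way, the proof is complete.
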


\begin{proof}
By Theorem~\ref{main2}, if $\s$ is a silting subcategory in $\B$, then $\s$ admits a cotorsion pair $(\s^{\vee},\s^{\wedge})$. Denote $\s^{\wedge}\cap \Z$ by $\V_\Z$.

We first show the following fact:

For an $\EE$-triangle $V_Z\to S\to Z\dashrightarrow$, if $Z\in \Z$, $S\in \s$ and $V_Z\in \s^{\wedge}$, then $V_Z\in  \V_\Z$.

$Z$ admits an $\EE$-triangle $Z'\to W\to Z\dashrightarrow$ where $W\in \W$ and $Z'\in \Z$, since $\EE(W,V_Z)=0$, we have the following commutative  diagram.
$$\xymatrix@C=0.8cm@R0.7cm{
&Z' \ar@{=}[r] \ar[d] &Z' \ar[d]\\
V_Z \ar[r] \ar@{=}[d] &V_Z\oplus W \ar[r] \ar[d] &W \ar[d] \ar@{-->}[r] &\\
V_Z\ar[r] &S \ar[r] \ar@{-->}[d] &Z \ar@{-->}[d] \ar@{-->}[r] &\\
&&&
}
$$
Since $\Z$ is closed under direct summands and extensions, we can get that $V_Z\in \Z$. Hence $V_Z\in \V_\Z$.

The argument above shows that any object $Y\in \V_\Z$ admits the following $\EE$-triangles:
$$L_0\to S_0\to Y\dashrightarrow, \quad L_1\to S_1\to L_0\dashrightarrow, \quad \cdots, \quad S_n \to S_{n-1} \to L_{n-1}\dashrightarrow$$
where $S_i \in \s, i=0,1,...,n$ and $L_j\in \V_\Z,  j=1,...,n-1$.

We show that any object $Z\in \Z$ can be generated by $\s$, which implies that $\s$ is a silting subcategory in $\Z$ by definition. From the argument above we can get that any object in $\V_\Z$ can be generated by $\s$.  Since $Z$ admits an $\EE$-triangle $V_Z\to U_Z\to Z\dashrightarrow$ where $U_Z\in \s^{\vee}$ and $V_Z\in \s^{\wedge}$, we can assume that we have shown that:
$$\text{If }\U\in \s^{\vee}_k,\text{ then }\Z\text{ can be generated by }\s.$$
Now let $U_Z\in \s^{\vee}_{k+1}$. Since $U_Z$ admits an $\EE$-triangle $U_Z\to S_0\to U_Z^0\dashrightarrow$ where $S_0\in \s$ and $U_Z^0\in \s^{\vee}_k$, we have the following commutative diagrams
$$\xymatrix@C=0.8cm@R0.7cm{
V_Z \ar[r] \ar@{=}[d] &U_Z \ar[r] \ar[d] &Z \ar[d] \ar@{-->}[r] &\\
V_Z \ar[r] &S_0 \ar[r] \ar[d] &V_Z^0 \ar[d]  \ar@{-->}[r] &\\
&U_Z^0 \ar@{=}[r] \ar@{-->}[d] &U_Z^0 \ar@{-->}[d]\\
&&&\\
}\quad \xymatrix@C=0.8cm@R0.7cm{
Z \ar[r] \ar[d] &W \ar[d] \ar[r] &Z \langle 1\rangle \ar@{-->}[r] \ar@{=}[d] &\\
V_Z^0 \ar[d] \ar[r] &W\oplus U_Z^0 \ar[r] \ar[d] &Z  \langle 1\rangle \ar@{-->}[r] &\\
U_Z^0 \ar@{-->}[d] \ar@{=}[r] &U_Z^0 \ar@{-->}[d]\\
&&&
}
$$
where $W\in \W$ and $V_Z^0\in \s^{\wedge}$. Since $W\oplus U_Z^0\in \s^{\vee}_k$, by hypothesis, we can get that $Z \langle 1\rangle $ can be generated by $\s$. Hence $Z$ can be generated by $\s$.

\end{proof}

According to \cite[Theorem 1.1]{MDZH}, we can get a bijection between the silting subcategories in $\Z$ and the silting subcategories in $\Z/[\W]$. Hence we have the following corollary.

\begin{cor}\label{silzw}
Let $\s\supseteq \W$ be a presilting subcategory. Then $\overline \s$ is a silting subcategory in $\Z/[\W]$ if and only if it is a silting subcategory in $\B$.
\end{cor}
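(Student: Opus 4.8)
The plan is to deduce Corollary~\ref{silzw} directly from Proposition~\ref{silz} together with the silting correspondence for Frobenius extriangulated categories. By Proposition~\ref{silz}, for a presilting subcategory $\s\supseteq\W$ of $\B$, being silting in $\B$ is equivalent to being silting in $\Z$; so it suffices to identify ``$\s$ silting in $\Z$'' with ``$\overline\s$ silting in $\Z/[\W]$''. This last equivalence is exactly the content of \cite[Theorem~1.1]{MDZH}, i.e.\ Remark~\ref{Frob} applied to the Frobenius category $\Z$ in place of $\B$ and $\W$ in place of $\mathcal P$.

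First I would check that $\s$ is an honest subcategory of $\Z$, so that the statement even makes sense. Since $\s$ is presilting in $\B$ and $\W\subseteq\s$, we have $\EE^i(\s,\W)=0$ and $\EE^i(\W,\s)=0$ for all $i>0$, whence $\s\subseteq{}^{\bot}\W\cap\W^{\bot}=\Z$. By Lemma~\ref{lem:Fro}, $\Z$ is a Frobenius extriangulated category whose subcategory of projective-injective objects is precisely $\W$, so that $\Z/[\W]$ is triangulated; and by Lemma~\ref{psilz}, $\s$ is presilting in $\Z$. Thus all the hypotheses needed to invoke the silting correspondence for $\Z$ are in place: $\s$ is presilting in $\Z$ and contains the projective-injective objects $\W$ of $\Z$.

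Now apply \cite[Theorem~1.1]{MDZH} (as recorded in Remark~\ref{Frob}, with $\B$ replaced by $\Z$ and $\mathcal P$ replaced by $\W$): since $\s\supseteq\W$ is presilting in $\Z$, $\s$ is a silting subcategory in $\Z$ if and only if $\overline\s=\s/[\W]$ is a silting subcategory in $\Z/[\W]$. Chaining this with the equivalence ``$\s$ silting in $\B$ $\iff$ $\s$ silting in $\Z$'' from Proposition~\ref{silz} yields the desired equivalence ``$\overline\s$ silting in $\Z/[\W]$ $\iff$ $\s$ silting in $\B$''.

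There is essentially no obstacle here: all the real work has already been done, in Proposition~\ref{silz} (which packages the two preceding lemmas) and in Lemma~\ref{lem:Fro} (the Frobenius structure on $\Z$). The only point requiring any care is the hypothesis bookkeeping for invoking \cite[Theorem~1.1]{MDZH} --- namely that $\s$ lies in $\Z$, contains the projective-injectives $\W$ of $\Z$, and is presilting in $\Z$ --- and each of these follows immediately from $\W\subseteq\s$ and $\s$ presilting, together with Lemmas~\ref{lem:Fro} and~\ref{psilz}.
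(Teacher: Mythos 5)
Your proof is correct and follows exactly the same route as the paper: combine Proposition~\ref{silz} (silting in $\B$ $\iff$ silting in $\Z$) with the MDZH correspondence (Remark~\ref{Frob} applied to the Frobenius category $\Z$ with projectives $\W$). The only thing you add beyond the paper's one-line justification is the useful sanity check that $\s\subseteq\Z$, which follows from $\W\subseteq\s$ and $\s$ being presilting.
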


\subsection{A localization of $\B$ realized by $\Z/[\W]$}

 In the rest of this section, we assume that $\B$ is skeletally small. Denote by $\R$ the following class of morphisms
$$\{f:A\to B \mid \text{ there is an } \EE\mbox{-triangle} \text{ } A\xrightarrow{f} B\to M\dashrightarrow \text{ with } M\in \thick \W\}.$$
Denote by $\B/(\thick \W)$ the Gabriel-Zisman localization of $\B$ with respect to $\R$
(see \cite{GZ} for more details of such localization). In the localization $\B/(\thick \W)$, any morphism $f\in \R$ becomes invertible and any object in $\thick \W$ becomes zero. For any morphism $g$, we denote its image in $\B/(\thick \W)$ by $\underline g$.

We will show the following theorem.

\begin{thm}\label{main5}
The Gabriel-Zisman localization $\B/(\thick \W)$ is equivalent to $\Z/[\W]$.
\end{thm}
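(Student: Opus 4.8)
The plan is to construct a functor $\B\to\Z/[\W]$ inverting every morphism in $\R$, then use the universal property of the Gabriel--Zisman localization to get $\B/(\thick\W)\to\Z/[\W]$, and finally build an inverse. The key is to first produce, for each $B\in\B$, a \emph{canonical reduction} to $\Z$: using the two cotorsion pairs from condition (CP), every $B$ sits in $\EE$-triangles $V_B\to U_B\to B\dashrightarrow$ with $U_B\in\W^{\vee}$, $V_B\in\W^{\wedge}$, and then $U_B$ sits in $B'\to U_B\to U'\dashrightarrow$ (or dually) landing us in $\Z$; composing these, and using Lemma~\ref{lem:iso} (that $\Hom_{\oB}({}^{\perp}\W,\W^{\wedge})=0=\Hom_{\oB}(\W^{\vee},\W^{\perp})$) to see the construction is well-defined up to canonical isomorphism in $\Z/[\W]$, gives an assignment $B\mapsto \tilde B\in\Z$. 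This is the standard Iyama--Yang strategy; the extriangulated analogue of \cite[Proposition~3.2]{IY} is what makes it run, and Lemma~\ref{lem:iso} is the technical heart guaranteeing functoriality.

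Concretely, I would first show: (i) $\R$ admits a calculus of (left or right) fractions compatible with the localization, or more directly, (ii) every object of $\B$ becomes isomorphic in $\B/(\thick\W)$ to an object of $\Z$, and (iii) for $X,Y\in\Z$ the localization morphisms induce $\Hom_{\B/(\thick\W)}(X,Y)\cong\Hom_{\Z/[\W]}(X,Y)$. For (ii): an $\EE$-triangle $A\xrightarrow{f}B\to M\dashrightarrow$ with $M\in\thick\W$ shows $\underline f$ is invertible, so it suffices to connect any $B$ to some object of $\Z$ by a zigzag of such triangles; the cotorsion-pair resolutions do exactly this, since $\W^{\vee},\W^{\wedge}\subseteq\thick\W$. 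For (iii): a morphism $X\to Y$ in the localization is a fraction through some zigzag; using that $\thick\W$ is generated by $\W$ and the resolutions above, one reduces any such fraction to one with middle term in $\Z$, and then Lemma~\ref{lem:iso} kills the ambiguity coming from $[\W]$, while $\EE^i(\W,\Z)=\EE^i(\Z,\W)=0$ prevents new morphisms from appearing. Together (ii) and (iii) say the localization functor restricted to $\Z$ followed by $\Z\to\Z/[\W]$ is fully faithful and essentially surjective, hence an equivalence $\B/(\thick\W)\simeq\Z/[\W]$.

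In slightly more detail on the functor $\B\to\Z/[\W]$: given $g:A\to B$ in $\B$, form the canonical reductions $A\rightsquigarrow\tilde A$, $B\rightsquigarrow\tilde B$ through the cotorsion-pair $\EE$-triangles, lift $g$ along these triangles (possible by the long exact sequences of Lemma~\ref{lem:long} together with the vanishing $\EE^{>0}(\W^{\vee},\W^{\perp})=0$ from Lemma~\ref{lem1}), obtaining $\tilde g:\tilde A\to\tilde B$ in $\Z$, and define the image of $g$ to be $\overline{\tilde g}\in\Hom_{\Z/[\W]}(\tilde A,\tilde B)$. Lemma~\ref{lem:iso} shows this is independent of the choices of lifts and of the reduction triangles, and that it is additive and respects composition; one checks each $f\in\R$ goes to an isomorphism because then $\tilde f$ becomes an isomorphism in $\Z/[\W]$ (its cone lies in $\thick\W\cap\Z$, which maps to $0$). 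The universal property then yields $F:\B/(\thick\W)\to\Z/[\W]$, and the quotient $\Z\hookrightarrow\B\to\B/(\thick\W)$ factors through $\Z/[\W]$ (since objects of $\W$ die in $\B/(\thick\W)$), giving a candidate inverse $G$; checking $FG=\id$ and $GF=\id$ uses (ii) and (iii) once more.

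The main obstacle I expect is step (iii), controlling $\Hom$ in the localization. A priori a morphism in a Gabriel--Zisman localization is an equivalence class of long zigzags, and one must show $\R$ is well enough behaved (a multiplicative system, ideally admitting a calculus of fractions) that every fraction reduces to a short one with denominator a single map in $\R$ whose source and target can be pushed into $\Z$; verifying the Ore-type conditions for $\R$ in the extriangulated setting — using (WIC), the octahedron-type axioms, and that $\thick\W$ is closed under the relevant $\EE$-triangle operations — is the delicate part. Once the calculus of fractions is in place, the identification with $\Hom_{\Z/[\W]}$ is a matter of assembling Lemmas~\ref{lem:iso}, \ref{lem:formula}, and the cotorsion-pair resolutions. (The subsequent triangle-equivalence upgrade and the silting correspondence, Theorems~\ref{main6} and the corollary, then follow by transporting the triangulated structure of $\Z/[\W]$ and invoking Corollary~\ref{silzw}.)
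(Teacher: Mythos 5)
Your overall strategy matches the paper's exactly: fix cotorsion-pair $\EE$-triangles to send each $B$ to an object $Z_B\in\Z$, lift morphisms, obtain a functor $G\colon\B\to\Z/[\W]$ with well-definedness controlled by Lemma~\ref{lem:iso}, check that $G$ inverts $\R$, apply the universal property to get $H\colon\B/(\thick\W)\to\Z/[\W]$, construct a candidate inverse from $\Z\hookrightarrow\B\to\B/(\thick\W)$, and verify the two composites are identities.  Two points, however, are not merely "to be checked" but are where the real work lies, and in both places your sketch underestimates the difficulty or points in a direction the paper does not take.

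First, the claim that ``each $f\in\R$ goes to an isomorphism because $\tilde f$'s cone lies in $\thick\W\cap\Z$, which maps to $0$'' is a genuine gap.  The cone of $z_f\colon Z_B\to Z_C$ taken in $\B$ does land in $\thick\W$ (by octahedral-type manipulations), but there is no a priori reason for it to land in $\Z$: $\Z$ is only extension-closed in $\B$, and cones of morphisms between objects of $\Z$ need not remain in $\Z$.  If instead you take the cone in the triangulated category $\Z/[\W]$, you obtain a triangle $Z_B\to Z_C\to Z\to Z_B\langle1\rangle$ coming from the Frobenius structure on $\Z$, and it is not at all clear why $Z\in\W$.  The paper's Proposition~\ref{im} is precisely this statement, and its proof is by far the longest and most delicate part of the section: it does not go through a cone-vanishing argument at all, but instead performs a long sequence of $\EE$-triangle constructions to exhibit $\overline{z_f}$ as both a section and a retraction in $\Z/[\W]$.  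Your one-sentence justification does not substitute for this, and as written the route via ``cone in $\thick\W\cap\Z$'' does not obviously close.

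Second, a smaller point: you flag as the "main obstacle" the need for a calculus of fractions for $\R$ to control $\Hom$ in the localization.  The paper does not establish any Ore-type conditions.  Instead it works with arbitrary zigzags directly: given $\alpha\colon B\to C$ in $\B/(\thick\W)$, each link $\underline{f_i}$ or $\underline{g_i}^{-1}$ is conjugated (via Lemma~\ref{lem}, which says the localized map $\underline{z_X^{-1}x^+}$ intertwines $\underline{f}^{-1}$ with $\underline z$ for $f\in\R$) to a morphism between objects of $\Z$, the zigzag composes to a single morphism $\underline\zeta$ with $\zeta\in\Hom_\B(Z_B,Z_C)$, and $\alpha=(\underline{c^+})^{-1}\underline{z_C}\,\underline\zeta\,\underline{z_B}^{-1}\underline{b^+}$.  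Faithfulness then reduces to noticing that $\overline\zeta=\overline{\zeta'}$ forces $\underline\zeta=\underline{\zeta'}$ via Lemma~\ref{BML}(2).  This is a more elementary route than setting up a full multiplicative-system machinery, and it relies crucially on having already established Proposition~\ref{im} and Lemma~\ref{lem} --- which brings one back to the first gap.  So the hard part is not (iii) but the assertion you dispose of in one sentence.
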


We first establish a functor $G$ from $\B$ to $\Z/[\W]$. For any object $B$ of $\B$, we fix two $\EE$-triangles
\begin{align*}
B\xrightarrow{b^+} B^+\to U_B\dashrightarrow,\quad
V^B\to Z_B\xrightarrow{z_B} B^+\dashrightarrow
\end{align*}
where $U_B\in \W^{\vee}$, $V^B\in \W^{\wedge}$, $b^+$ is a minimal left $(\W^{\bot})$-approximation and $z_B$ is a minimal right $({^{\bot}}\W)$-approximation. We have $Z_B\in \Z$. We shall use these facts frequently later without mentioned.

Let $f:B\to C$ be any morphism in $\B$. Then we have the following commutative diagrams of $\EE$-triangles:
$$\xymatrix@C=0.8cm@R0.7cm{
B \ar[r]^{b^+} \ar[d]^f &B^+ \ar[r] \ar[d]^{f^+} &U_B \ar[d] \ar@{-->}[r] &\\
C \ar[r]^{c^+} &C^+ \ar[r] &U_C \ar@{-->}[r] &,\\
}\quad
\xymatrix@C=0.8cm@R0.7cm{
V^B \ar[r] \ar[d] &Z_B \ar[r]^{z_B} \ar[d]^{z_f} &B^+ \ar[d]^{f^+} \ar@{-->}[r] &\\
V^C \ar[r] &Z_C  \ar[r]^{z_C} &C^+ \ar@{-->}[r] &.
}
$$

\begin{lem}\label{unique}
For any morphism $f:B\to C$, $\overline z_f$ is unique.
\end{lem}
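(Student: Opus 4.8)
The plan is to show that the morphism $z_f\colon Z_B\to Z_C$ constructed via the two commutative diagrams is unique up to a morphism factoring through $\W$, i.e.\ that any two choices $z_f$ and $z_f'$ satisfy $\overline{z_f}=\overline{z_f'}$ in $\Z/[\W]$. First I would observe that, by the construction, any such $z_f$ fits into the commutative square with $z_B,z_C$ and the fixed morphism $f^+$; hence the difference $d:=z_f-z_f'\colon Z_B\to Z_C$ satisfies $z_C\circ d=0$. So $d$ factors through the inflation $V^C\to Z_C$, say $d = (\text{inflation})\circ e$ for some $e\colon Z_B\to V^C$ with $V^C\in\W^{\wedge}$. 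It then suffices to prove that every morphism $Z_B\to V^C$ with $Z_B\in\Z$ and $V^C\in\W^{\wedge}$ factors through $\W$; equivalently, that $\Hom_{\oB}(\Z,\W^{\wedge})=0$.

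For this last point I would invoke Lemma~\ref{lem:iso}, which gives $\Hom_{\oB}({}^{\perp}\W,\W^{\wedge})=0$. Since $\Z={}^{\perp}\W\cap\W^{\perp}\subseteq{}^{\perp}\W$, we get $\Hom_{\oB}(Z_B,V^C)=0$, i.e.\ every morphism $Z_B\to V^C$ factors through $\W$. Consequently $d=(\text{inflation})\circ e$ also factors through $\W$, so $\overline{d}=0$ and $\overline{z_f}=\overline{z_f'}$, as desired.

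One subtlety to address carefully: in the above I used that $f^+$ (appearing in the second diagram) is itself well-defined, but $f^+$ is built from the first diagram as a lift of $f$ along the left $(\W^{\bot})$-approximations $b^+,c^+$, and such a lift is only unique up to a morphism $B^+\to C^+$ that kills $b^+$, hence factors through $U_B\in\W^{\vee}$. So I should really compare two genuinely different choices: different $f^+$'s \emph{and} different $z_f$'s lifting them. If $f^+$ and $(f^+)'$ differ by a morphism $g\colon B^+\to C^+$ with $g\circ b^+=0$, then $g$ factors through $U_B\in\W^{\vee}$, and the induced change in $z_f$ is $z_C$-related to $g\circ z_B\colon Z_B\to C^+$, a morphism that factors through $\W^{\vee}$; pushing this back through the $\EE$-triangle $V^C\to Z_C\xrightarrow{z_C}C^+$ and using $\Hom_{\oB}(\W^{\vee},\W^{\perp})=0$ from Lemma~\ref{lem:iso} (applied to the relevant pieces, together with the long exact sequence of Lemma~\ref{lem:long}) again shows the change is zero in $\Z/[\W]$.

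The main obstacle I expect is precisely this bookkeeping of "uniqueness modulo $\W$" at two stages at once — first for $f^+$ in terms of the left-approximation property, then for $z_f$ in terms of the right-approximation property — and organizing it so that both corrections are absorbed by the two vanishing statements of Lemma~\ref{lem:iso}. Once one is careful that $Z_B\in{}^{\perp}\W$ (so the $\W^{\wedge}$-valued corrections vanish) and that the $\W^{\vee}$-valued corrections land in a group controlled by $\W^{\perp}$ after applying $\Hom(-,Z_C)$ or $\Hom(-,C^+)$ to the defining $\EE$-triangles, the argument closes with no genuine computation.
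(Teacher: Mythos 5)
Your proposal is correct and takes essentially the same approach as the paper: both use the two vanishing statements of Lemma~\ref{lem:iso}, once to control the ambiguity in $f^+$ (which factors through $U_B\in\W^{\vee}$ with target $C^+\in\W^{\perp}$) and once to control the ambiguity in $z_f$ (which, after lifting the $\W$-valued piece through $z_C$, factors through $V^C\in\W^{\wedge}$ with source $Z_B\in{}^{\perp}\W$). The paper merely combines your two stages into a single pass, writing the composite difference $z_C\circ(z_f-z_f')$ as factoring through some $W\in\W$ and then lifting through the right $({}^\perp\W)$-approximation $z_C$.
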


\begin{proof}
If we have $(f^+)':B^+\to C^+$ and $z_f':Z_B\to Z_C$ such that the following diagrams commute:
$$\xymatrix@C=0.8cm@R0.7cm{
B \ar[r]^{b^+} \ar[d]^f &B^+ \ar[r] \ar[d]^{(f^+)'} &U_B \ar[d] \ar@{-->}[r] &\\
C \ar[r]^{c^+} &C^+ \ar[r] &U_C \ar@{-->}[r] &,\\
} \quad
\xymatrix@C=0.8cm@R0.7cm{
V^B \ar[r] \ar[d] &Z_B \ar[r]^{z_B} \ar[d]^{z_f'} &B^+ \ar[d]^{(f^+)'} \ar@{-->}[r] &\\
V^C \ar[r] &Z_C  \ar[r]^{z_C} &C^+ \ar@{-->}[r] &,
}
$$
then $f^+-(f^+)':B^+\to C^+$ factors through $U_B$, hence by Lemma~\ref{lem:iso} factors through $\W$. Then $z_C\circ(z_f-z_f')$ factors through an object $W\in \W$. Let $z_C\circ(z_f-z_f')=w_2\circ w_1$ for morphisms $w_1:Z_B\to W$ and $w_2:W\to C^+$. Then there is a morphism $w_3:W\to Z_C$ such that $w_2=z_C\circ w_3$. Thus $z_C\circ\left((z_f-z_f')-w_3w_1\right)=0$, then $(z_f-z_f')-w_3w_1$ factors through $V^C$, hence by Lemma~\ref{lem:iso} factors through $\W$. Then $\overline z_f=\overline z_f'$.
\end{proof}

By the proof of this lemma, we can get the following corollary immediately.

\begin{cor}
Let $f,f'\in \Hom_\B(B,C)$ such that $\overline f=\overline f'$. Then $\overline z_f=\overline z_{f'}$.
\end{cor}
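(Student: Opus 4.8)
The plan is to reuse, essentially verbatim, the argument given in the proof of Lemma~\ref{unique}, since that proof already isolates exactly the cancellation phenomenon we need. Concretely, suppose $f,f'\in\Hom_\B(B,C)$ with $\overline f=\overline f'$, so that $f-f'$ factors through some object $W_0\in\W$, say $f-f'=v\circ u$ with $u:B\to W_0$ and $v:W_0\to C$. The first step is to lift each of $f$, $f'$ separately to morphisms of the fixed $\EE$-triangles defining $B^+,C^+$ and $Z_B,Z_C$, obtaining $f^+,(f')^+:B^+\to C^+$ and $z_f,z_{f'}:Z_B\to Z_C$ as in the displayed diagrams preceding Lemma~\ref{unique}. (This is legitimate because $\overline{z_f}$, $\overline{z_{f'}}$ are well defined by Lemma~\ref{unique}, so any choice of lift represents the same class.)

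The key step is to show $f^+-(f')^+:B^+\to C^+$ factors through $\W$. Since $b^+$ is a left $(\W^\perp)$-approximation and $U_B\in\W^\vee$, the morphism $u:B\to W_0\in\W\subseteq\W^\perp$ factors as $u = \tilde u\circ b^+$ for some $\tilde u:B^+\to W_0$; hence $f-f' = (v\circ\tilde u)\circ b^+$. Comparing with the defining square of $f^+$ and of $(f')^+$, both $f^+\circ b^+$ and $(f')^+\circ b^+$ equal $c^+\circ$ (the respective $f$'s), so $\bigl((f^+-(f')^+) - c^+\circ v\circ\tilde u\bigr)\circ b^+ = 0$. By Lemma~\ref{lem:long} applied to the $\EE$-triangle $B\xrightarrow{b^+}B^+\to U_B\dashrightarrow$, the morphism $(f^+-(f')^+) - c^+ v\tilde u$ factors through $U_B\in\W^\vee$, and then by Lemma~\ref{lem:iso} (using $C^+\in\W^\perp$, which holds since $b^+$ is a $\W^\perp$-approximation) this factoring morphism is itself in $[\W]$; meanwhile $c^+ v\tilde u$ visibly factors through $W_0\in\W$. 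Therefore $f^+-(f')^+\in[\W]$.

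From here the argument is identical to the last four sentences of the proof of Lemma~\ref{unique}: write $z_C\circ(z_f-z_{f'})$ as a composite factoring through an object $W\in\W$ (it does, because it equals $(f^+-(f')^+)\circ z_B$ up to the commuting squares and $f^+-(f')^+\in[\W]$), lift that factorization through the deflation $z_C:Z_C\to C^+$ using that $W\in\W\subseteq{}^\perp\W$ so $\EE(W,V^C)=0$, subtract off the lifted part to kill the composite with $z_C$, conclude the remainder factors through $V^C\in\W^\wedge$, and invoke Lemma~\ref{lem:iso} once more (with $Z_B\in{}^\perp\W$) to see the remainder lies in $[\W]$. Hence $\overline{z_f}=\overline{z_{f'}}$.

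I do not expect any serious obstacle here: the corollary is designed to be an immediate byproduct of the proof of Lemma~\ref{unique}, and indeed the statement in the excerpt already says ``By the proof of this lemma, we can get the following corollary immediately.'' The only mild care needed is the bookkeeping at the start — producing the factorization $f-f' = (v\tilde u)\circ b^+$ through $B^+$ rather than through $B$ — so that one can actually feed it into the already-established cancellation mechanism at the level of $B^+,C^+$; once $f^+-(f')^+\in[\W]$ is in hand, everything downstream is a verbatim repetition.
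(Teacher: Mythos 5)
Your proposal is correct and is precisely the argument the paper intends: the paper's entire proof is the remark ``By the proof of this lemma, we can get the following corollary immediately,'' and your write-up just makes explicit the one new step — that $\overline{f}=\overline{f'}$ forces $f^{+}-(f')^{+}\in[\W]$ (via factoring $f-f'$ through $b^{+}$ and invoking Lemma~\ref{lem:iso} on the remainder through $U_B$) — after which the second half of the proof of Lemma~\ref{unique} applies verbatim.
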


Now we can define a functor $G:\B\to \Z/[\W]$ as following:
$$G(B)=Z_B, \quad G(f)=\overline z_f.$$

\begin{rem}\label{rem1}
By the construction  of $G$, we have $G(b^+)=G(z_B)=\overline 1_{Z_B}$.
\end{rem}

We show a very important property of this functor.

\begin{prop}\label{im}
$G(f)$ is an isomorphism for any morphism $f:B\to C$ in $\R$.
\end{prop}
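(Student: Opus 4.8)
The plan is to show that the image under $G$ of a morphism $f\in\R$ is an isomorphism in $\Z/[\W]$ by analyzing how $G$ interacts with the defining $\EE$-triangle of $f$. Let $f\colon B\to C$ be in $\R$, so there is an $\EE$-triangle $B\xrightarrow{f}C\to M\dashrightarrow$ with $M\in\thick\W$. The first step is to record the two functorial $\EE$-triangles attached to $B$ and $C$, namely $B\xrightarrow{b^+}B^+\to U_B\dashrightarrow$ with $U_B\in\W^\vee$ and $V^B\to Z_B\xrightarrow{z_B}B^+\dashrightarrow$ with $V^B\in\W^\wedge$ (likewise for $C$), and to use the naturality diagrams from just before Lemma~\ref{unique} to obtain $f^+\colon B^+\to C^+$ and $z_f\colon Z_B\to Z_C$ with $G(f)=\overline{z_f}$.

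Next I would reduce to the case $M\in\W$. Since $M\in\thick\W$, it sits in a finite sequence of $\EE$-triangles built from objects of $\W$; by factoring $f$ accordingly (or by inducting on the ``$\thick$-length'' of $M$, using the two-out-of-three closure of isomorphisms and the fact that $G$ is a functor so $G(gh)=G(g)G(h)$), it suffices to treat the case where $M\in\W$ itself, and then more generally $M\in\W^\vee$ and $M\in\W^\wedge$ separately (every object of $\thick\W$ can be reached from $\W$ by repeatedly taking cones and cocones, and $\W^\vee,\W^\wedge$ are the first layers). So assume $M\in\W$. Then applying $\EE$-triangle calculus: because $M\in\W\subseteq{}^\bot\W\cap\W^\bot=\Z$ and $b^+$ is a left $\W^\bot$-approximation, one checks that $b^+$ and $f$ together produce, after pushout along $B\xrightarrow{f}C$, a morphism $B^+\to C^+$ whose cone lies in $\W^\vee$; tracing through the second layer of triangles (the $z$'s), the induced map $z_f\colon Z_B\to Z_C$ fits into an $\EE$-triangle in $\Z$ whose third term, after killing $\W$, becomes zero. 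Concretely, I expect that the cone of $z_f$ (computed inside $\Z$, using Lemma~\ref{lem:formula} so that extensions match) lies in $\add\W$, so that $\overline{z_f}$ becomes an isomorphism in $\Z/[\W]$ — this is exactly the statement that a morphism in a Frobenius category with cone in the projective-injectives descends to an isomorphism in the stable category.

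The cleanest route for the last paragraph's assertion is probably the following: build, using Proposition~\ref{prop:NPLN} and the octahedral-type axioms for extriangulated categories, a commutative diagram relating the $\EE$-triangle $B\xrightarrow{f}C\to M\dashrightarrow$ to the approximation triangles, showing that $Z_B$ and $Z_C$ differ (up to direct summands in $\W$) by an object of $\thick\W\cap\Z$, and that $\thick\W\cap\Z=\W$; then $G(f)$ is invertible in $\Z/[\W]$ because the stable category of the Frobenius category $\Z$ kills precisely $\W$. I expect the main obstacle to be the bookkeeping in the reduction to $M\in\W$: one must verify that the filtration of $M$ by $\W$-built $\EE$-triangles can be lifted to a factorization of $f$ through morphisms each of whose cone lies in $\W$ (equivalently, that $\R$ is generated, as a multiplicative system modulo $G$, by morphisms with cone in $\W$), and that at each stage the relevant squares commute after passing to $\Z/[\W]$; this uses (WIC) and the closure properties of $\W^\vee$, $\W^\wedge$ and $\Z$ under cones/cocones/extensions from Lemma~\ref{lem1} and Lemma~\ref{lem:Fro}. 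Once that reduction is in place, the $M\in\W$ case is a short diagram chase with the long exact sequences of Lemma~\ref{lem:long} and the vanishing $\Hom_{\oB}(\W^\vee,\W^\bot)=0=\Hom_{\oB}({}^\bot\W,\W^\wedge)$ from Lemma~\ref{lem:iso}.
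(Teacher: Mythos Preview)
Your strategy is genuinely different from the paper's, and the reduction you propose has a real gap.

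\medskip

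\textbf{The gap.} You want to reduce to the case $M\in\W$ by factoring $f$ as a composite of inflations each of whose cone lies in $\W$, and then use that $G$ preserves composites. The problem is twofold. First, the parenthetical claim that ``every object of $\thick\W$ can be reached from $\W$ by repeatedly taking cones and cocones'' is not justified: $\thick\W$ is by definition also closed under direct summands, and in general there is no reason an arbitrary $M\in\thick\W$ admits a finite filtration by $\EE$-triangles with subquotients in $\W$. Second, even when such a filtration of $M$ exists, you have not shown that it can be \emph{lifted to a factorization of the given inflation $f$}. In an extriangulated category this is delicate: not every morphism is an inflation, and the octahedral-type axioms (ET4) let you decompose along one $\EE$-triangle at a time, but stringing these together and controlling summands is exactly the ``bookkeeping'' you flag as the main obstacle --- and it is not carried out. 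Your base case $M\in\W$ is also only asserted (``one checks'', ``I expect''), not proved; in particular you have not exhibited the $\EE$-triangle in $\Z$ whose third term lies in $\W$.

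\medskip

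\textbf{How the paper proceeds instead.} The paper does \emph{not} reduce to $M\in\W$. It works directly with an arbitrary $M\in\thick\W$ and performs a long chain of pushout/pullback diagrams along the approximation triangles $B\to B^+\to U_B$ and $V^C\to Z_C\to C^+$. The key step that replaces your induction is the identification
\[
\W^{\bot}\cap\thick\W=\W^{\wedge}
\]
(quoted from \cite[Lemma~3.13]{AT}): after pushing $M$ into $\W^{\bot}$ via the approximations, its image lands in $\W^{\wedge}$, and from there further resolutions by objects of $\W$ are available. This lets the paper show, by explicit diagram chasing, that $\overline{z_f}$ is a split monomorphism in $\Z/[\W]$ and, by a parallel argument, that $Z_C$ is a summand of $Z_B$ in $\Z/[\W]$; together these give the isomorphism. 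No filtration of $M$ and no factorization of $f$ is ever invoked.

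\medskip

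In short: your outline would need (i) a concrete description of $\thick\W$ that avoids summand-closure (not available in general), or (ii) a proof that the class of ``good'' cones is itself thick, including the summand step --- neither of which you provide. The paper sidesteps all of this with the single identity $\W^{\bot}\cap\thick\W=\W^{\wedge}$ and a direct computation.
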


\begin{proof}
Since $f\in\R$, there is an $\EE$-triangle $B\xrightarrow{f} C\to M\dashrightarrow$ with $M\in
\thick \W$. Then we have the following commutative diagram.
$$\xymatrix@C=0.8cm@R0.7cm{
B \ar[r]^{b^+} \ar[d]^f &B^+ \ar[r]^{u_B} \ar[d]^{d_1} &U_B \ar@{=}[d] \ar@{-->}[r] &\\
C \ar[r]^{c'} \ar[d] &C' \ar[d]^{d'_2} \ar[r] &U_B \ar@{-->}[r] &\\
M \ar@{=}[r] \ar@{-->}[d] &M \ar@{-->}[d]\\
&&&
}
$$
Since $C^+\in \W^{\bot}$, we have $\EE(U_B,C^+)=0$. There is a morphism $d_2:C'\to C^+$ such that $d_2c'=c^+$. Then we have the following commutative diagram
$$\xymatrix@C=0.8cm@R0.7cm{
B \ar[r]^{b^+} \ar[d]^f &B^+ \ar[r]^{u_B} \ar[d]^{d_1} &U_B \ar@{=}[d] \ar@{-->}[r] &\\
C \ar[r]^{c'} \ar@{=}[d] &C' \ar[d]^{d_2} \ar[r]^{u'} &U_B \ar[d] \ar@{-->}[r] &\\
C \ar[r]^{c^+} &C^+ \ar[r] &U_C \ar@{-->}[r] &
}
$$
such that $\overline {d_2d_1}=\overline f^+$. By Proposition~\ref{prop:NPLN}, this diagram induces an $\EE$-triangle $C'\xrightarrow{\svecv{d_2}{{\red u'}}} C^+\oplus U_B\to U_C \dashrightarrow$. Then we have a commutative diagram
$$\xymatrix@C=0.8cm@R0.7cm{
B^+ \ar[r]^{d_1} \ar@{=}[d] &C' \ar[r]^{d_2'} \ar[d]^-{\svecv{d_2}{u'}} &M \ar@{-->}[r] \ar[d] &\\
B^+ \ar[r] &C^+\oplus U_B \ar[r] \ar[d] &M_1 \ar[d] \ar@{-->}[r]&\\
&U_C \ar@{=}[r] \ar@{-->}[d] &U_C \ar@{-->}[d]\\
&&&
}$$
with $M_1\in \thick \W$. Since $U_B$ admits an $\EE$-triangle $U_B\xrightarrow{w} W\to U_1\dashrightarrow$ where $W\in \W$ and $U_1\in \W^{\vee}$, we have the following commutative diagram
$$\xymatrix@C=0.8cm@R0.7cm{
B^+ \ar[r]^-{\svecv{d_2d_1}{ u_B}} \ar@{=}[d] &C^+\oplus U_B \ar[r] \ar[d]^-{\left(\begin{smallmatrix}
1& 0\\
0& w
\end{smallmatrix}\right)} &M_1 \ar[d] \ar@{-->}[r]&\\
B^+ \ar[r] &C^+\oplus W \ar[r] \ar[d] &V_1 \ar[d] \ar@{-->}[r]&\\
&U_1 \ar@{=}[r] \ar@{-->}[d] &U_1 \ar@{-->}[d]\\
&&&
}
$$
where $V_1\in \W^{\bot}\cap \thick \W=\W^{\wedge}$ by \cite[Lemma 3.13]{AT}. Then we have a commutative diagram
$$\xymatrix@C=0.8cm@R0.7cm{
V^B \ar[r] \ar[d] &V^C \ar@{=}[r] \ar[d] &V^C \ar[d]\\
Z_B \ar[r]^{z_1} \ar[d]_{z_B} &(B^+)' \ar[r]^-{\svecv{z_2}{ w'}} \ar[d] &Z_C\oplus W \ar[r] \ar[d]^-{\left(\begin{smallmatrix}
z_C& 0\\
0& 1
\end{smallmatrix}\right)} &V_1\ar@{=}[d] \ar@{-->}[r]&\\
B^+ \ar@{=}[r] \ar@{-->}[d] &B^+ \ar[r]_-{\svecv{d_2d_1}{ wu_B}} \ar@{-->}[d] &C^+\oplus W \ar[r] \ar@{-->}[d] &V_1 \ar@{-->}[r]&\\
&&&
}
$$
with $(B^+)'\in \W^{\bot}$. We have $z_Cz_2z_1=d_2d_1z_B$. Hence $\overline {z_Cz_2z_1}=\overline {f^+z_B}$, by the proof of Lemma \ref{unique}, we have $\overline z_f=\overline {z_2z_1}$. By Proposition~\ref{prop:NPLN}, this diagram induces an $\EE$-triangle $V^B\to Z_B\oplus V^C\xrightarrow{\svech{z_1}{*}} (B^+)'\dashrightarrow$. We have the following commutative diagram.
$$\xymatrix@C=0.8cm@R0.7cm{
V^B \ar[r] \ar[d] &Z_B\oplus V^C \ar[r]^-{\svech{z_1}{*}} \ar[d]^-{\left(\begin{smallmatrix}
z_1& *\\
*& *
\end{smallmatrix}\right)} &(B^+)' \ar@{=}[d] \ar@{-->}[r]&\\
I \ar[r] \ar[d] &(B^+)'\oplus I \ar[r] \ar[d] &(B^+)' \ar@{-->}[r]&\\
\Sigma V^B \ar@{=}[r] \ar@{-->}[d] &\Sigma V^B \ar@{-->}[d]\\
&&&
}
$$
Since $I$ admits an $\EE$-triangle $V_I\to W_I\to I\dashrightarrow$ where $W_I\in \W$ and $V_I\in \W^{\wedge}$, we have the following commutative diagram
$$\xymatrix@C=0.8cm@R0.7cm{
V_I \ar@{=}[r] \ar[d] &V_I \ar[d]\\
X \ar[r] \ar[d] &(B^+)'\oplus W_I \ar[r] \ar[d] &\Sigma V^B \ar@{=}[d] \ar@{-->}[r]&\\
Z_B\oplus V^C \ar[r]_-{\left(\begin{smallmatrix}
z_1& *\\
*& *
\end{smallmatrix}\right)} \ar@{-->}[d] &(B^+)'\oplus I \ar[r] \ar@{-->}[d] &\Sigma V^B\ar@{-->}[r]&\\
&&&
}
$$
with $X\in \W^{\bot}$. Since $\EE(Z_B,V_I)=0$, $Z_B$ becomes a direct summand of $X$, hence $X$ has the form $Z_B\oplus T$ with $T\in \W^{\bot}$.
Then we have the following commutative diagrams
$$\xymatrix@C=0.8cm@R0.7cm{
V_I \ar@{=}[r] \ar[d] &V_I \ar[d]\\
Z_B\oplus T \ar[r] \ar[d]_-{\left(\begin{smallmatrix}
1& *\\
*& *
\end{smallmatrix}\right)
} &(B^+)'\oplus W_I \ar[r] \ar[d]^-{\left(\begin{smallmatrix}
1& 0\\
0& *
\end{smallmatrix}\right)} &\Sigma V^B \ar@{=}[d] \ar@{-->}[r]&\\
Z_B\oplus V^C \ar[r]_-{\left(\begin{smallmatrix}
z_1& *\\
*& *
\end{smallmatrix}\right)} \ar@{-->}[d] &(B^+)'\oplus I \ar[r] \ar@{-->}[d] &\Sigma V^B\ar@{-->}[r]&\\
&&&
}\quad
\xymatrix@C=0.8cm@R0.7cm{
Z_B\oplus T \ar[r]^-{\left(\begin{smallmatrix}
z_1& *\\
*& *
\end{smallmatrix}\right)} \ar@{=}[d] &(B^+)'\oplus W_I \ar[r] \ar[d]^-{\left(\begin{smallmatrix}
z_2& 0\\
{\red w'}& 0\\
0&1
\end{smallmatrix}\right)} &\Sigma V^B \ar[d] \ar@{-->}[r]&\\
Z_B\oplus T  \ar[r]_-{\left(\begin{smallmatrix}
z_2z_1& *\\
w_1& *\\
w_2&*
\end{smallmatrix}\right)} &Z_C\oplus W\oplus W_I \ar[r] \ar[d] &V \ar[d] \ar@{-->}[r]&\\
&V_1 \ar@{=}[r] \ar@{-->}[d] &V_1 \ar@{-->}[d]\\
&&&
}$$
with $V\in\W^{\wedge}$. Since $V$ admits an $\EE$-triangle $V'\to W'\to V\dashrightarrow$ where $W'\in \W$ and $V'\in \W^{\wedge}$, we have a commutative diagram.
$$\xymatrix@C=0.8cm@R0.7cm{
&V' \ar@{=}[r] \ar[d] &V' \ar[d]\\
Z_B\oplus T \ar[r]^{\alpha} \ar@{=}[d] &Y \ar[r] \ar[d]^-{\beta} &W' \ar[d] \ar@{-->}[r]&\\
Z_B\oplus T \ar[r]_-{\left(\begin{smallmatrix}
z_2z_1& *\\
w_1& *\\
w_2&*
\end{smallmatrix}\right)}  &Z_C\oplus W\oplus W_I \ar[r] \ar@{-->}[d] &V \ar@{-->}[d] \ar@{-->}[r]&\\
&&&
}
$$
Since $\EE(W',Z_B\oplus T)=0$ and $\EE(Z_C\oplus W\oplus W_I,V')=0$, we have $ Z_B\oplus T\oplus W'\simeq Y\simeq Z_C\oplus W\oplus W_I\oplus V'$. Now we can consider the following commutative diagram
$$\xymatrix@C=0.8cm@R0.7cm{
&&V' \ar@{=}[r] \ar[d] &V' \ar[d]\\
Z_B\oplus T \ar[rr]^-{\left(\begin{smallmatrix}
z_2z_1& *\\
w_1& *\\
w_2&*\\
w_3&*
\end{smallmatrix}\right)=\alpha_1} \ar@{=}[d] &&Z_C\oplus W\oplus W_I\oplus V' \ar[r] \ar[d]^-{\left(\begin{smallmatrix}
1& 0 &0&0\\
0& 1&0&0\\
0&0&1&0
\end{smallmatrix}\right)} &W' \ar[d] \ar@{-->}[r]&\\
Z_B\oplus T \ar[rr]_-{\left(\begin{smallmatrix}
z_2z_1& *\\
w_1& *\\
w_2&*
\end{smallmatrix}\right)}  &&Z_C\oplus W\oplus W_I \ar[r] \ar@{-->}[d] &V \ar@{-->}[d] \ar@{-->}[r]&\\
&&&
}
$$
where $\alpha_1$ is a section. Since $Z_B\oplus T\xrightarrow{\svech{1}{0}} Z_B$ factors through $\alpha_1$, there is a morphism
$$Z_C\oplus W\oplus W_I\oplus V' \xrightarrow{\left(\begin{smallmatrix}
u_0& u_1&u_2 &u_3
\end{smallmatrix}\right)=\gamma} Z_B$$
such that $\svech{1}{0}=\gamma\alpha_1$. Hence $1=u_0z_2z_1+u_1w_1+u_2w_2+u_3w_3$. Since $\overline {u_iw_i}=0,i=1,2,3$, we get $\overline 1=\overline {u_0z_2z_1}=\overline {u_0z_f}$. Thus $\overline z_f$ is a section and $Z_B$ is a direct summand of $Z_C$ in $\Z/[\W]$.
On the other hand, we have the following commutative diagram
$$\xymatrix@C=0.8cm@R0.7cm{
Z_B\oplus V^C \ar[r]^-{\left(\begin{smallmatrix}
z_1& *\\
*& *
\end{smallmatrix}\right)} \ar@{=}[d] &(B^+)'\oplus I \ar[r] \ar[d]^-{\left(\begin{smallmatrix}
z_2& 0\\
 w'& 0\\
0&1
\end{smallmatrix}\right)} &\Sigma V^B \ar[d] \ar@{-->}[r]&\\
Z_B\oplus V^C  \ar[r]_-{\left(\begin{smallmatrix}
z_2z_1& *\\
*& *\\
*&*
\end{smallmatrix}\right)} &Z_C\oplus W\oplus I \ar[r] \ar[d] &V_0 \ar[d] \ar@{-->}[r]&\\
&V_1 \ar@{=}[r] \ar@{-->}[d] &V_1 \ar@{-->}[d]\\
&&&
}$$
where $V_0\in\W^{\wedge}$. Since $V_0$ admits an $\EE$-triangle $V_0'\to W_0'\to V_0\dashrightarrow$ where $W_0'\in \W$ and $V_0'\in \W^{\wedge}$, we have a commutative diagram
$$\xymatrix@C=0.8cm@R0.7cm{
&V_0' \ar@{=}[r] \ar[d] &V_0' \ar[d]\\
Z_B\oplus V^C \ar[r]^-{\alpha'} \ar@{=}[d] &Y' \ar[r] \ar[d]^-{\left(\begin{smallmatrix}
y_1\\
y_2\\
y_3
\end{smallmatrix}\right)} &W_0' \ar[d] \ar@{-->}[r]&\\
Z_B\oplus V^C \ar[r]_-{\left(\begin{smallmatrix}
z_2z_1& *\\
*& *\\
*&*
\end{smallmatrix}\right)}  &Z_C\oplus W\oplus I \ar[r] \ar@{-->}[d] &V_0 \ar@{-->}[d] \ar@{-->}[r]&\\
&&&
}
$$
where $\alpha'$ is a section since $\EE(W_0',Z_B\oplus V^C)=0$. Then $Y'\simeq Z_B\oplus V^C\oplus W_0'$.
Since $\EE(Z_C,V_0')=0$, there is a morphism $y_1':Z_C\to Y'$ such that $y_1y_1'=1$. Hence $Z_C$ is a direct summand of $Y'$, which implies that $Z_C$ is a direct summand of $Z_B$ in $\Z/[\W]$. Hence $Z_B\simeq Z_C$ in $\Z/[\W]$ and $\overline z_f$ is an isomorphism.
\end{proof}

By Proposition \ref{im} and the universal property of the localization functor $L_\R:\B\to \B/(\thick \W)$, we get the following commutative diagram:
$$\xymatrix@C=0.8cm@R0.7cm{
\B\ar[rr]^-{G} \ar[dr]_{L_\R} &&\Z/[\W].\\
&\B/(\thick \W) \ar@{.>}[ur]_{H}
}
$$

The following lemma is important. The proof is an analogue of \cite[Lemma 3.5]{BM}, so we omit it.

\begin{lem}\label{BML}
\begin{itemize}
\item[(1)] Let $X$ be any object in $\B$ and $M\in \thick \W$. Then $X\xrightarrow{\svecv{1_X}{0}} X\oplus M$ is invertible in $\B/(\thick\W)$, its inverse is $X\oplus M\xrightarrow{\svech{1_X}{0}}X$. This implies that $\svech{1_X}{0}$ is also invertible in $\B/(\thick\W)$.
\item[(2)] Let $f,f'\in \Hom_\B(B,C)$. If $\overline f'=0$, then $\underline {f+f'}=\underline f$ in $\B/(\thick\W)$.
\end{itemize}
\end{lem}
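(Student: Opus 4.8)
The plan is to reduce everything to two basic observations: first, that any object $M\in\thick\W$ becomes zero in $\B/(\thick\W)$, and second, that the localization functor $L_\R$ is additive, so it sends a split $\EE$-triangle to a split triangle in which the zero object $M$ disappears. For part (1), I would start from the split $\EE$-triangle $X\xrightarrow{\svecv{1_X}{0}} X\oplus M\xrightarrow{\svech{0}{1_M}} M\dashrightarrow$. Since $M\in\thick\W$, the morphism $\svecv{1_X}{0}$ lies in $\R$ by definition of $\R$, so $L_\R\left(\svecv{1_X}{0}\right)$ is invertible in $\B/(\thick\W)$. To identify its inverse, compose with $\svech{1_X}{0}$: we have $\svech{1_X}{0}\circ\svecv{1_X}{0}=1_X$, so $\underline{\svech{1_X}{0}}\circ\underline{\svecv{1_X}{0}}=1_X$ in $\B/(\thick\W)$; since $\underline{\svecv{1_X}{0}}$ is already known to be invertible, it follows that $\underline{\svech{1_X}{0}}$ is its two-sided inverse, and in particular $\svech{1_X}{0}$ is invertible in $\B/(\thick\W)$ as well.

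For part (2), suppose $f,f'\in\Hom_\B(B,C)$ with $\overline{f'}=0$, i.e.\ $f'$ factors as $f'=w_2 w_1$ with $w_1\colon B\to W$, $w_2\colon W\to C$ and $W\in\W\subseteq\thick\W$. I would first note that $\underline{w_1}=0$ in $\B/(\thick\W)$: indeed $w_1$ factors as $B\xrightarrow{\svecv{w_1}{0}} W\oplus ?$—more cleanly, embed $w_1$ into the split $\EE$-triangle picture using part (1). Concretely, consider the commutative diagram
$$\xymatrix@C=0.8cm@R0.7cm{
B \ar[r]^-{\svecv{1_B}{w_1}} \ar@{=}[d] & B\oplus W \ar[r] \ar[d]^-{\svech{1_B}{0}} & W \ar@{-->}[r] & \\
B \ar@{=}[r] & B & &
}$$
where the top row is a split $\EE$-triangle with $W\in\thick\W$, so $\svecv{1_B}{w_1}\in\R$ and hence $\underline{\svecv{1_B}{w_1}}$ is invertible; composing with the invertible $\underline{\svech{1_B}{0}}$ from part (1) and using $\svech{1_B}{0}\circ\svecv{1_B}{w_1}=1_B$ shows these are mutually inverse. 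Therefore in $\B/(\thick\W)$ we have $\underline{w_1}=\underline{\svech{0}{1_W}}\circ\underline{\svecv{1_B}{w_1}}$, and since $\underline{\svech{0}{1_W}}=0$ (as $W\in\thick\W$ becomes a zero object, so every morphism out of or into it vanishes), we get $\underline{w_1}=0$. Consequently $\underline{f'}=\underline{w_2}\,\underline{w_1}=0$, and by additivity of $L_\R$, $\underline{f+f'}=\underline{f}+\underline{f'}=\underline{f}$.

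The only genuinely delicate point is the claim that every morphism factoring through an object of $\thick\W$ is annihilated by $L_\R$; this is where one uses that $\thick\W$-objects become zero in the localization, which in turn rests on part (1) applied with $X=0$ (giving that $0\to M$ and $M\to 0$ are mutually inverse, so $M\cong 0$). Since the excerpt explicitly says this lemma is an analogue of \cite[Lemma 3.5]{BM} and the proof is omitted there too, I expect the referee-level content is exactly the bookkeeping above; the main obstacle, such as it is, is simply being careful that the various split $\EE$-triangles used really are split (hence genuine $\EE$-triangles) and that all the matrix compositions are what they appear to be.
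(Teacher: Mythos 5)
Part~(1) is correct and is exactly the expected argument. Part~(2), however, contains a genuine gap at the very last step: you conclude ``by additivity of $L_\R$, $\underline{f+f'}=\underline f+\underline{f'}$.'' At this point in the paper it is not known that the Gabriel--Zisman localization $\B/(\thick\W)$ is an additive category or that $L_\R$ respects addition on hom-sets; establishing an additive (indeed triangulated) structure on $\B/(\thick\W)$ is precisely what Sections~3 and~4 are building up to, and Lemma~\ref{BML} is itself an ingredient (via Corollary~\ref{BMLcor} and Lemma~\ref{lem}) in the proof of Theorem~\ref{main5}, so invoking additivity here is premature and circular. For the same reason the intermediate claim ``$\underline{\svech{0}{1_W}}=0$'' is not yet meaningful: until the localization has a zero-morphism (additive) structure, the statement $\underline{f'}=0$ has no content.

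The standard fix, and what the Buan--Marsh argument does, is to avoid adding morphisms in the localization altogether and push the addition into $\B$, where it is available. Write $f'=w_2w_1$ with $W\in\W\subseteq\thick\W$. In $\B$ one has
$$f+f'=\svech{f}{w_2}\svecv{1_B}{w_1},\qquad f=\svech{f}{w_2}\svecv{1_B}{0}.$$
Both $\svecv{1_B}{w_1}$ and $\svecv{1_B}{0}$ fit into $\EE$-triangles with third term $W\in\thick\W$ (the isomorphism $\left(\begin{smallmatrix}1&0\\-w_1&1\end{smallmatrix}\right)$ of $B\oplus W$ carries the split $\EE$-triangle for $\svecv{1_B}{0}$ to one for $\svecv{1_B}{w_1}$), so both lie in $\R$ and become invertible under $L_\R$. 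They share the common left inverse $\svech{1_B}{0}$, and a one-sided inverse of an isomorphism is its unique two-sided inverse, whence $\underline{\svecv{1_B}{w_1}}=\bigl(\underline{\svech{1_B}{0}}\bigr)^{-1}=\underline{\svecv{1_B}{0}}$. Applying the plain functor $L_\R$ to the two displayed factorizations and substituting gives $\underline{f+f'}=\underline{\svech{f}{w_2}}\circ\underline{\svecv{1_B}{w_1}}=\underline{\svech{f}{w_2}}\circ\underline{\svecv{1_B}{0}}=\underline f$, with no appeal to additivity of the localization.
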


This lemma has a useful conclusion.

\begin{cor}\label{BMLcor}
Let $M'\to A\xrightarrow{g} B\dashrightarrow$ be an $\EE$-triangle with $M'\in \thick \W$. Then $\underline g$ is invertible.
\end{cor}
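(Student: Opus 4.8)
The statement to prove is Corollary~\ref{BMLcor}: given an $\EE$-triangle $M'\to A\xrightarrow{g} B\dashrightarrow$ with $M'\in \thick\W$, the morphism $\underline g$ is invertible in $\B/(\thick\W)$.

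My plan is to rotate the given $\EE$-triangle and then reduce to the two cases already handled in Lemma~\ref{BML}. First I would rotate $M'\to A\xrightarrow{g} B\dashrightarrow$ using the shift functor $\Sigma$ on $\thick\W$; since $\thick\W$ is thick and closed under $\Sigma$ (by Remark~\ref{rmk1} and \cite[Lemma 3.13]{AT}, $\thick\W$ contains $\mathcal P\cup\mathcal I$ and is extension-closed, so it is closed under syzygies and cosyzygies), we get an $\EE$-triangle $A\xrightarrow{g} B\to \Sigma M'\dashrightarrow$ with $\Sigma M'\in \thick\W$. But wait — rotation of $\EE$-triangles is not literally available as in triangulated categories; instead I would argue directly. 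Apply Proposition~\ref{prop:NPLN} to the $\EE$-triangle $M'\xrightarrow{m} A\xrightarrow{g} B\dashrightarrow$ and the zero morphism $M'\to 0$, or better: use that $\B$ has enough injectives to build an $\EE$-triangle $M'\to I\to \Sigma M'\dashrightarrow$ with $I\in\mathcal I\subseteq\thick\W$, hence $\Sigma M'\in\thick\W$ as well. Then by the standard homotopy-pushout construction (\cite[Proposition 1.20]{LN}, i.e.\ Proposition~\ref{prop:NPLN}) applied along $M'\to I$, one obtains a commutative diagram of $\EE$-triangles yielding an $\EE$-triangle of the form $A\xrightarrow{\svecv{g}{*}} B\oplus I\to \Sigma M'\dashrightarrow$.

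Now the morphism $\svecv{g}{*}:A\to B\oplus I$ lies in $\R$ (its cone is $\Sigma M'\in\thick\W$), so by Proposition~\ref{im} — or more elementarily by the definition of the Gabriel--Zisman localization, since $\R$ consists exactly of the morphisms made invertible — $\underline{\svecv{g}{*}}$ is invertible in $\B/(\thick\W)$. On the other hand, since $I\in\thick\W$ becomes zero in the localization, I claim $\underline{\svecv{g}{*}}$ and $\underline{\svecv{g}{0}}$ agree: the morphism $\svecv{0}{*}:A\to B\oplus I$ factors through $I\in\thick\W$, and although Lemma~\ref{BML}(2) is stated for morphisms killed by $\overline{(-)}$ rather than by $\underline{(-)}$, the analogous statement for $\thick\W$ holds because any morphism factoring through an object of $\thick\W$ becomes zero in $\B/(\thick\W)$ (composites through a zero object are zero); hence $\underline{\svecv{g}{*}} = \underline{\svecv{g}{0}}$. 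Finally, $\svecv{g}{0} = \svecv{1_B}{0}\circ g$ with $\svecv{1_B}{0}:B\to B\oplus I$ invertible in $\B/(\thick\W)$ by Lemma~\ref{BML}(1) (taking $M=I$). Therefore $\underline g = \underline{\svecv{1_B}{0}}^{-1}\circ\underline{\svecv{g}{0}} = \underline{\svecv{1_B}{0}}^{-1}\circ\underline{\svecv{g}{*}}$ is a composite of invertibles, hence invertible.

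The main obstacle I anticipate is the bookkeeping in the first step: producing, in a purely extriangulated setting, the $\EE$-triangle $A\to B\oplus I\to \Sigma M'\dashrightarrow$ compatibly with $g$, since one cannot simply "rotate". The cleanest route is to invoke Proposition~\ref{prop:NPLN} with the inflation $M'\to I$ and the map $M'\xrightarrow{m} A$: this gives a morphism $I\to$ (something) fitting into a commutative diagram whose bottom row, after identifying terms, is the desired $\EE$-triangle with third term $\Sigma M'$; one must check $\Sigma M'\in\thick\W$, which follows since $\thick\W\ni M', I$ and $\thick\W$ is thick. A secondary minor point is justifying that factoring through $\thick\W$ kills a morphism in $\B/(\thick\W)$ and that the two maps $\svecv{g}{*}$, $\svecv{g}{0}$ become equal — both are immediate from the universal property once the relevant objects are sent to zero. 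Everything else is formal manipulation with the localization functor $L_\R$.
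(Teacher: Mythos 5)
Your proof is correct and follows essentially the same route as the paper: both push out along $M'\to I$ with $I\in\mathcal I$ to obtain an $\EE$-triangle $A\to I\oplus B\to\Sigma M'\dashrightarrow$ whose deflation term lies in $\thick\W$, and then recover $\underline g$ by composing with the projection onto $B$, which is invertible by Lemma~\ref{BML}(1). The only stylistic difference is your intermediate replacement of $\underline{\svecv{g}{*}}$ by $\underline{\svecv{g}{0}}$; the paper avoids this detour by simply noting $\svech{0}{1}\svecv{i}{g}=g$ directly.
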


\begin{proof}
We have the following commutative diagram
$$\xymatrix@C=0.8cm@R0.7cm{
M'\ar[r] \ar[d] &A \ar[r]^g \ar[d]^{\svecv{i}{g}} &B \ar@{-->}[r] \ar@{=}[d] &\\
I \ar[r] \ar[d] &I\oplus B \ar[r] \ar[d] &B \ar@{-->}[r] &\\
\Sigma M' \ar@{=}[r] \ar@{-->}[d] &\Sigma M' \ar@{-->}[d]\\
&&
}
$$
with $I\in \mathcal I$. Since $\Sigma M'\in \thick \W$, $A\xrightarrow{\svecv{i}{g}}I\oplus B$ becomes invertible in $\B/(\thick \W)$. By Lemma \ref{BML}, $I\oplus B\xrightarrow{\svech{0}{1}} B$ is invertible in $\B/(\thick \W)$, we can get that $\svech{0}{1}\svecv{i}{g}=g$ is also invertible in $\B/(\thick\W)$.
\end{proof}

\begin{rem}
Let $f:B\to C$ be any morphism in $\R$. In the following commutative diagrams,
$$\xymatrix@C=0.8cm@R0.7cm{
B \ar[r]^{b^+} \ar[d]^f &B^+ \ar[r] \ar[d]^{f^+} &U_B \ar[d] \ar@{-->}[r] &\\
C \ar[r]^{c^+} &C^+ \ar[r] &U_C \ar@{-->}[r] &\\
}\quad
\xymatrix@C=0.8cm@R0.7cm{
V^B \ar[r] \ar[d] &Z_B \ar[r]^{z_B} \ar[d]^{z_f} &B^+ \ar[d]^{f^+} \ar@{-->}[r] &\\
V^C \ar[r] &Z_C  \ar[r]^{z_C} &C^+ \ar@{-->}[r] &
}
$$
$\underline f^{+}$ is invertible. Moreover,  by Corollary \ref{BMLcor}, $\underline z_B$ and $\underline z_C$ are invertible.
\end{rem}

\begin{lem}\label{lem}
Let $f:B\to C$ be any morphism in $\R$. Then we have the following commutative diagram in $\B/(\thick \W)$
$$\xymatrix@C=0.8cm@R0.7cm{
C \ar[r]^{\underline c^+} \ar[d]^{\underline f^{-1}} &C^+ \ar[r]^{\underline z_C^{-1}} \ar[d]^{ (\underline f^+)^{-1}} &Z_C \ar[d]^{\underline z}\\
B \ar[r]_{\underline b^+} &B^+ \ar[r]_{\underline z_B^{-1}} &Z_B
}
$$
where $z:Z_C\to Z_B$ is a morphism in $\Z$ such that $\overline z=G(f)^{-1}$.
\end{lem}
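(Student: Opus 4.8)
The plan is a formal diagram chase in $\B/(\thick\W)$, resting on two facts: every map in the square that appears inverted is already invertible in the localization, and an equality of morphisms holding modulo $[\W]$ can be promoted to an equality in $\B/(\thick\W)$ by Lemma~\ref{BML}(2).

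First I would record the invertibilities. Since $f\in\R$, $\underline f$ is invertible in $\B/(\thick\W)$; and the Remark preceding the statement already gives that $\underline f^{+}$, $\underline z_B$, $\underline z_C$ are invertible. (Concretely: $b^{+},c^{+}\in\R$, as their cones $U_B,U_C$ lie in $\W^{\vee}\subseteq\thick\W$ by Remark~\ref{rmk1}, so $\underline b^{+},\underline c^{+}$ are invertible; and the $\EE$-triangles $V^B\to Z_B\xrightarrow{z_B}B^{+}\dashrightarrow$, $V^C\to Z_C\xrightarrow{z_C}C^{+}\dashrightarrow$ have first terms in $\W^{\wedge}\subseteq\thick\W$, so Corollary~\ref{BMLcor} applies.) By Proposition~\ref{im}, $G(f)$ is an isomorphism in $\Z/[\W]$, so $G(f)^{-1}$ exists and a morphism $z$ with $\overline z=G(f)^{-1}$ can be chosen; thus every composite in the asserted diagram is defined.

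For the left square: multiplying on the appropriate sides by the invertible $\underline f$ and $\underline f^{+}$, its commutativity is equivalent to $\underline{c^{+}}\,\underline f=\underline f^{+}\,\underline{b^{+}}$. But $c^{+}f=f^{+}b^{+}$ holds already in $\B$, being the left-hand commuting square of the first diagram of $\EE$-triangles defining $f^{+}$; applying the localization functor $L_{\R}$ gives the claim. For the right square: multiplying by the invertible $\underline z_B$, $\underline z_C$, $\underline f^{+}$, its commutativity is equivalent to $\underline f^{+}\,\underline z_B\,\underline z=\underline z_C$. Substituting the relation $f^{+}z_B=z_C z_f$, which holds in $\B$ (the right-hand commuting square of the second diagram defining $z_f$), this becomes $\underline z_C\,\underline{z_f}\,\underline z=\underline z_C$, and cancelling the invertible $\underline z_C$ reduces everything to $\underline{z_f\circ z}=\underline{1_{Z_C}}$.

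To finish I would invoke the construction: $\overline{z_f}=G(f)$ and $\overline z=G(f)^{-1}$, so $\overline{z_f\circ z}=\overline{1_{Z_C}}$ in $\Z/[\W]$; equivalently $z_f\circ z-1_{Z_C}$ factors through an object of $\W\subseteq\thick\W$. Lemma~\ref{BML}(2) then yields $\underline{z_f\circ z}=\underline{1_{Z_C}+(z_f\circ z-1_{Z_C})}=\underline{1_{Z_C}}$, which completes the argument. There is no genuine obstacle: the content is purely formal, and the only care needed is in tracking the directions of the arrows so that the cancellations of the invertible maps are legitimate, and in noting that the passage from an equality in the additive quotient $\Z/[\W]$ to the corresponding one in the Gabriel--Zisman localization $\B/(\thick\W)$ is precisely what Lemma~\ref{BML}(2) provides.
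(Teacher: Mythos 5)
Your proposal is correct and follows essentially the same route as the paper: invoke Proposition~\ref{im} to get $G(f)=\overline{z_f}$ invertible, use Lemma~\ref{BML}(2) to upgrade $\overline{z_f z}=\overline{1_{Z_C}}$ (an identity modulo $[\W]$) to $\underline{z_f z}=\underline{1_{Z_C}}$ in the localization, and then combine this with the image under $L_\R$ of the commuting square $f^+z_B=z_Cz_f$ to obtain $\underline{f^+}\underline{z_B}\underline{z}=\underline{z_C}$, hence the right square; the left square is $L_\R$ applied to $c^+f=f^+b^+$. The only organizational difference is that the paper multiplies by $z$ in $\B$ before applying $L_\R$, whereas you pass to the localization first and then cancel the invertible $\underline{z_C}$, which is a cosmetic reordering of the same computation.
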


\begin{proof}
By Proposition \ref{im}, $G(f)=\overline z_f$ is invertible, let $\overline z=G(f)^{-1}$. Then $1_{Z_C}-z_fz$ factors through $\W$, by Lemma \ref{BML}, we have $\underline 1_{Z_C}=\underline {z_fz}$.
By the following commutative diagram
$$\xymatrix@C=0.8cm@R0.7cm{
B \ar[r]^{b^+} \ar[d]_f &B^+ \ar[d]^{f^+} &Z_B \ar[l]_{z_B} \ar[d]^{z_f}\\
C \ar[r]_{c^+} &C^+ &Z_C \ar[l]_{z_C}
}
$$
we have $f^{+}z_Bz=z_Cz_fz$. By applying $L_\R$ to this equation, we get $\underline f^{+}\underline z_B\underline z=\underline z_C$, then $\underline z\underline z_C^{-1}=\underline z_B^{-1}(\underline f^+)^{-1} $. Hence we have the following commutative diagram.
$$\xymatrix@C=0.8cm@R0.7cm{
C \ar[r]^{\underline c^+} \ar[d]^{\underline f^{-1}} &C^+ \ar[r]^{\underline z_C^{-1}} \ar[d]^{ (\underline f^+)^{-1}} &Z_C \ar[d]^{\underline z}\\
B \ar[r]_{\underline b^+} &B^+ \ar[r]_{\underline z_B^{-1}} &Z_B
}
$$
\end{proof}

We now give the proof of Theorem \ref{main5}.

\begin{proof}[Proof of Theorem~\ref{main5}.]
We show that $H$ is an equivalence. Since $G|_{\Z}$ is identical on objects, we know that $H$ is dense. We show that $H$ is faithful.

Let $\alpha:B\to C$ be any morphism in $\B/(\thick \W)$. It has the form $B\xrightarrow{\beta_0} D_1\xrightarrow{\beta_1} \cdot\cdot\cdot \xrightarrow{\beta_{n-1}} D_n\xrightarrow{\beta_n} C$ where $\beta_i$ is a morphism $\underline {f_i}$ or a morphism $\underline {g_i}^{-1}$ with $g_i\in \R$. We have a commutative diagram
$$\xymatrix@C=0.8cm@R0.7cm{
B \ar[d]_{\underline z_B^{-1}\underline b^+} \ar[r]^{\beta_0} &D_1 \ar[r]^{\beta_1} \ar[d] &\cdot \cdot \cdot \ar@{}[d]^{\cdots}_{\cdots} \ar[r]^{\beta_{n-1}} &D_n\ar[r]^{\beta_n} \ar[d] &C \ar[d]^{\underline z_C^{-1}\underline c^+}\\
Z_B \ar[r]^{\underline z_0} &Z_1 \ar[r]^{\underline z_1} &\cdot \cdot \cdot \ar[r]^{\underline z_{n-1}} &Z_n\ar[r]^{\underline z_n} &Z_C
}
$$
where $Z_i\in \Z$ and $z_i$ are morphisms in $\Z$ by Lemma \ref{lem}. Denote $z_0 z_1\cdot\cdot\cdot z_{n-1}z_n$ by $\zeta$, we have $\alpha=(\underline c^+)^{-1}\underline z_C\underline \zeta\underline z_B^{-1}\underline b^+$. If there exists a morphism $\alpha':B\to C$ in $\B/(\thick \W)$ such that $H(\alpha)=H(\alpha')$, since we also have $\alpha'=(\underline c^+)^{-1}\underline z_C\underline \zeta'\underline z_B^{-1}\underline b^+$ with some $\zeta'\in \Hom_\B(Z_B,Z_C)$, we can get that $\overline {\zeta}=H(\underline {\zeta})=H(\underline {\zeta'})=\overline {\zeta'}$. Hence $\zeta-\zeta'$ factors through $\W$, which implies $\underline {\zeta}=\underline {\zeta'}$. Thus $\alpha=\alpha'$.

Finally we show that $H$ is full. Let $\gamma:H(B)\to H(C)$ be any morphism. By Lemma \ref{lem} we can get that $\gamma=\overline z$ where $z$ is a morphism in $\Z$. Since we have the following commutative diagram in $\B/(\thick \W)$:
$$\xymatrix@C=0.8cm@R0.7cm{
B \ar[r]^{\underline b^+} \ar[d]_{\alpha} &B^+ \ar[d]^{\underline {z_Czz_B^{-1}}} &Z_B=H(B) \ar[l]_-{\underline z_B} \ar[d]^{\underline z}\\
C \ar[r]_{\underline c^+} &C^+ &Z_C=H(C) \ar[l]^-{\underline z_C}
}
$$
we have $H(\alpha)=H(\underline c^+)^{-1}H(\underline z_C)H(\underline z)H(\underline z_B)^{-1}H(\underline b^+)$. Since $H(\underline c^+)=H(\underline z_C)=\overline 1_{Z_C}$ and $H(\underline b^+)=H(\underline z_B)=\overline 1_{Z_B}$ by Remark \ref{rem1}, we obtain that $H(\alpha)=H(\underline z)=\overline z$, hence $H$ is full.
\end{proof}

Since we also have the following commutative diagram
$$\xymatrix{
\Z\ar@{ >->}[r]^{\eta} \ar@{->>}[dr]_{\pi} &\B \ar@{->>}[r]^-{L_\R} &\B/(\thick \W)\\
 &\Z/[\W] \ar@{.>}[ur]_{F}
}
$$
where $\eta$ is the embedding functor and $\pi$ is the canonical quotient functor, we have the following proposition.

\begin{prop}\label{quasi}
$F$ is the quasi-inverse of $H$.
\end{prop}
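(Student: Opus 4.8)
The plan is to show that the two functors $H$ and $F$ constructed above are mutually quasi-inverse, using that $H$ has already been proved (in the proof of Theorem~\ref{main5}) to be a triangle equivalence and that both composites are ``identities up to natural isomorphism'' on the nose for objects lying in $\Z$.

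\medskip

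First I would pin down what $F$ does. By the universal property of the Gabriel--Zisman localization one has $F\circ\pi = L_\R\circ\eta$, i.e. for $Z\in\Z$ we get $F(Z)=Z$ regarded as an object of $\B/(\thick\W)$, and for a morphism $\overline a$ in $\Z/[\W]$ (with $a$ a morphism in $\Z$) we get $F(\overline a)=\underline a$; this is well defined because, by Lemma~\ref{BML}(2), any morphism factoring through $\W$ becomes zero in $\B/(\thick\W)$ (recall $\W\subseteq\thick\W$). Next I would compute $H\circ F$ on $\Z/[\W]$: for $Z\in\Z$, $H(F(Z)) = H(Z) = G(Z) = Z_Z$. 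Now $Z\in\Z$ means $b^+\colon Z\to Z^+$ can be taken to be $1_Z$ (a minimal left $\W^\bot$-approximation of an object already in $\W^\bot$), hence $Z^+=Z$, and then $z_Z\colon Z_Z\to Z^+$ can be taken to be $1_Z$ as well, so $Z_Z=Z$ and $G$ restricted to $\Z$ is literally the identity on objects; on morphisms $a\colon Z\to Z'$ in $\Z$ one checks directly from the construction of $G$ that $G(a)=\overline{z_a}=\overline a$. Thus $H\circ F = \id_{\Z/[\W]}$ on the nose (or at worst canonically naturally isomorphic to it).

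\medskip

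For the other composite $F\circ H\colon \B/(\thick\W)\to\B/(\thick\W)$, I would use the commutative triangle $G = H\circ L_\R$ together with the factorization of $G$ through $\Z$. For an object $B\in\B$ we have $F(H(L_\R(B))) = F(G(B)) = F(Z_B) = Z_B$, viewed in $\B/(\thick\W)$; and the two $\EE$-triangles $B\xrightarrow{b^+}B^+\to U_B\dashrightarrow$ with $U_B\in\W^\vee\subseteq\thick\W$ and $V^B\to Z_B\xrightarrow{z_B}B^+\dashrightarrow$ with $V^B\in\W^\wedge\subseteq\thick\W$ show, via Corollary~\ref{BMLcor}, that $\underline{b^+}$ and $\underline{z_B}$ are invertible in $\B/(\thick\W)$, giving a natural isomorphism $\underline{z_B}^{-1}\underline{b^+}\colon B\xrightarrow{\ \sim\ }Z_B$ in $\B/(\thick\W)$. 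Naturality in $B$ follows from the commutative squares defining $z_f$ (for $f\colon B\to C$ in $\B$) after applying $L_\R$, exactly as in the commutative diagram of Lemma~\ref{lem}; since every morphism of $\B/(\thick\W)$ is a zig-zag of images of morphisms of $\B$ and inverses of images of maps in $\R$, and Lemma~\ref{lem} handles the $\R$-inverses, this assembles into a natural isomorphism $F\circ H \cong \id_{\B/(\thick\W)}$.

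\medskip

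Finally I would combine these: $H$ and $F$ are functors with $H\circ F\cong\id$ and $F\circ H\cong\id$, so $F$ is a quasi-inverse of $H$. (Alternatively, since Theorem~\ref{main5}'s proof already shows $H$ is an equivalence, it suffices to exhibit either one of the two natural isomorphisms, say $F\circ H\cong\id_{\B/(\thick\W)}$, and a quasi-inverse is then unique up to isomorphism; but producing both is just as cheap here.) The only genuinely delicate point is checking naturality of the isomorphism $B\cong Z_B$ with respect to the inverted morphisms in $\R$ — i.e. verifying that $F$ sends $\underline g^{-1}$, for $g\in\R$, to the map predicted by $G$ — and this is precisely what Lemma~\ref{lem} was set up to provide, so the main obstacle is really bookkeeping rather than anything conceptual.
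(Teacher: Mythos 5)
Your proof is correct and follows essentially the same route as the paper: it establishes $H\circ F=\Id_{\Z/[\W]}$ on the nose (from the construction of $G$ restricted to $\Z$), and exhibits the natural isomorphism $F\circ H\cong\Id_{\B/(\thick\W)}$ via the invertible maps $\underline{z_B}^{-1}\underline{b^+}\colon B\to Z_B$, with naturality reduced to the defining squares for $z_f$. One small remark: the ``delicate point'' you flag at the end is actually automatic — once the naturality square commutes for a morphism $g$ in $\R$, commutativity for $\underline g^{-1}$ follows because all four sides are isomorphisms and $FH(\underline g^{-1})=FH(\underline g)^{-1}$, so the extra appeal to Lemma~\ref{lem} for this point is a harmless redundancy rather than a necessary step.
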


\begin{proof}
By definition we can get $HF=\Id_{\Z/[\W]}$. Let $f:B\to C$ be any morphism in $\B$. Then $FH(B)\xrightarrow{FH(\underline f)}FH(C)$ is just $Z_B\xrightarrow{\underline z_f}Z_C$. From the following commutative diagram
$$\xymatrix{
FH(B) \ar[rr]^{(\underline b^+)^{-1} \underline z_B}_{\simeq} \ar[d]_{FH(\underline f)} &&B \ar[d]^{\underline f}\\
FH(C) \ar[rr]_{(\underline c^+)^{-1} \underline z_C}^{\simeq} &&C
}
$$
we get that $FH\cong \Id_{\B/(\thick\W)}$.
\end{proof}

We give an example of our silting reduction.

\begin{exm}
Let $Q$ be the following infinite quiver:
$$\xymatrix@C=0.5cm@R0.5cm{\cdots \ar[r]^{x_{-5}} &-4 \ar[r]^{x_{-4}} &-3 \ar[r]^{x_{-3}} &-2 \ar[r]^{x_{-2}} &-1 \ar[r]^{x_{-1}} &0 \ar[r]^{x_{0}} &1 \ar[r]^{x_1} &2 \ar[r]^{x_2} &3 \ar[r]^{x_3} &4 \ar[r]^{x_4} &\cdots}$$
Let $\Lambda=kQ/(x_ix_{i+1}x_{i+2},i\neq 4k,x_{4k}x_{4k+1})$. Then the AR-quiver of $\mod \Lambda$ is the following.
{\small $$\xymatrix@C=0.001cm@R1cm{
 &&&\bigstar \ar[dr] &&\bigstar \ar[dr] &&\bigstar \ar[dr] &&&&\bigstar \ar[dr] &&\bigstar \ar[dr] &&\bigstar \ar[dr] &&&&\bigstar \ar[dr] &&\bigstar \ar[dr] &&\bigstar \ar[dr] &&&&\bigstar \ar[dr] && \bigstar \ar[dr] && \bigstar \ar[dr]\\
\cdots \ar[dr] &&\bigstar \ar[dr] \ar[ur] &&\circ \ar[ur] \ar[dr] &&\circ  \ar[ur] \ar[dr] &&\circ \ar[dr] &&\bigstar \ar[ur] \ar[dr] &&\circ \ar[ur] \ar[dr] &&\circ \ar[ur] \ar[dr] &&\circ  \ar[dr] &&\circ \ar[ur] \ar[dr] &&\circ \ar[ur] \ar[dr] &&\circ \ar[ur] \ar[dr] &&\bigstar \ar[dr] &&\circ \ar[ur] \ar[dr] &&\circ \ar[dr] \ar[ur] &&\circ \ar[dr] \ar[ur] &&\bigstar \ar[dr] &&\cdots \\
&\circ \ar[ur] &&\circ \ar[ur] &&\circ\ar[ur] &&\circ \ar[ur]  &&\circ \ar[ur] &&\circ\ar[ur] &&\circ \ar[ur] &&\circ \ar[ur] &&\clubsuit \ar[ur] &&\circ \ar[ur] &&\circ\ar[ur] &&\circ \ar[ur]  &&\circ \ar[ur] &&\circ \ar[ur] &&\circ \ar[ur] &&\circ \ar[ur] &&\circ \ar[ur]
}
$$}The additive closure of the indecomposable objects denoted by $\bigstar$ and the only object denoted by $\clubsuit$ form a silting subcategory in $\mod \Lambda$, denote it by $\s$. We denote the additive closure of the indecomposable objects in $\bigstar$ by $\W$, it is presilting and $(\W^{\vee},\W^{\bot})$, $({^{\bot}}\W,\W^{\wedge})$ are cotorsion pairs. The indecomposable objects in $\Z/[\W]$ are denoted by $\spadesuit$ in the diagram:
{\small $$\xymatrix@C=0.001cm@R1cm{
 &&&\bigstar \ar[dr] &&\bigstar \ar[dr] &&\bigstar \ar[dr] &&&&\bigstar \ar[dr] &&\bigstar \ar[dr] &&\bigstar \ar[dr] &&&&\bigstar \ar[dr] &&\bigstar \ar[dr] &&\bigstar \ar[dr] &&&&\bigstar \ar[dr] && \bigstar \ar[dr] && \bigstar \ar[dr]\\
\cdots \ar[dr] &&\bigstar \ar[dr] \ar[ur] &&\circ \ar[ur] \ar[dr] &&\spadesuit \ar[ur] \ar[dr] &&\circ \ar[dr] &&\bigstar \ar[ur] \ar[dr] &&\circ \ar[ur] \ar[dr] &&\spadesuit \ar[ur] \ar[dr] &&\circ  \ar[dr] &&\circ \ar[ur] \ar[dr] &&\spadesuit \ar[ur] \ar[dr] &&\circ \ar[ur] \ar[dr] &&\bigstar \ar[dr] &&\circ \ar[ur] \ar[dr] &&\spadesuit\ar[dr] \ar[ur] &&\circ \ar[dr] \ar[ur] &&\bigstar \ar[dr] &&\cdots \\
&\spadesuit \ar[ur] &&\spadesuit \ar[ur] &&\circ\ar[ur] &&\circ \ar[ur]  &&\spadesuit \ar[ur] &&\spadesuit \ar[ur] &&\circ \ar[ur] &&\circ \ar[ur] &&\spadesuit \ar[ur] &&\circ \ar[ur] &&\circ\ar[ur] &&\spadesuit \ar[ur]  &&\spadesuit \ar[ur] &&\circ \ar[ur] &&\circ \ar[ur] &&\spadesuit \ar[ur] &&\spadesuit \ar[ur]
}
$$}
$\s$ is a silting subcategory in $\Z$ and the object denoted by $\clubsuit$ is a silting object in $\Z/[\W]$. Moreover, any subcategory consisted by the direct sums of objects denoted by $\bigstar$ and one fixed object  denoted by $\spadesuit$ is a silting subcategory in both $\Z$ and $\B$.
\end{exm}

\section{The triangle equivalence}

In this section, we show that $\B/(\thick \W)$ has a natural triangulated structure.

\begin{prop}\label{induce1}
Let $A\xrightarrow{x} B\xrightarrow{y} C\dashrightarrow$ be any $\EE$-triangle in $\B$. There is a commutative diagram
$$\xymatrix@C=0.8cm@R0.7cm{
A\ar[r]^{\underline x} \ar[d]^{\simeq} &B\ar[r]^{\underline y} \ar[d]^{\simeq} &C\ar[d]^{\simeq}\\
\widetilde{Z_A} \ar[r]_{\underline {\widetilde{z_x}}} &\widetilde{Z_B} \ar[r]_{\underline {\widetilde{z_y}}} &\widetilde{Z_C}
}
$$
in $\B/(\thick \W)$, where $\widetilde{Z_A}\xrightarrow{\underline {\widetilde{z_x}}} \widetilde{Z_B}\xrightarrow{\underline {\widetilde{z_y}}} \widetilde{Z_C}$ admits the following commutative diagram in $\Z$:
$$\xymatrix@C=0.8cm@R0.7cm{
\widetilde{Z_A} \ar@{=}[d] \ar[r]^{\widetilde{z_x}} &\widetilde{Z_B} \ar[d] \ar[r]^{\widetilde{z_y}} &\widetilde{Z_C} \ar[d] \ar@{-->}[r]&\\
\widetilde{Z_A} \ar[r] &\widetilde{W} \ar[r] &Z_A\langle 1\rangle \ar@{-->}[r]&
}
$$
where $\widetilde{W}\in \W$ and $Z_A=G(A)$.
\end{prop}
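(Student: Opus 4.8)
The plan is to realize the given $\EE$-triangle, up to isomorphism in $\B/(\thick\W)$, by an $\EE$-triangle lying entirely inside $\Z$, and then feed the latter into the triangulated structure of $\Z/[\W]$ recalled above. Throughout I keep the notation $A\xrightarrow{a^+}A^+\to U_A\dashrightarrow$ and $V^A\to Z_A\xrightarrow{z_A}A^+\dashrightarrow$ for the two $\EE$-triangles attached to $A$ in the construction of $G$ (so $U_A\in\W^\vee$, $V^A\in\W^\wedge$, $Z_A=G(A)\in\Z$), and similarly for $B,C$. Since $\W^\vee\subseteq\thick\W$ and $\W^\wedge\subseteq\thick\W$, each of $a^+,b^+,c^+$ lies in $\R$ and is therefore invertible in $\B/(\thick\W)$, and by Corollary~\ref{BMLcor} so are $z_A,z_B,z_C$; this will be used repeatedly without further comment.

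First I replace the outer terms of $A\xrightarrow{x}B\xrightarrow{y}C\xrightarrow{\delta}$ by $A^+$ and $C^+$. Pushing out along the inflation $a^+$ (Proposition~\ref{prop:NPLN}) gives an $\EE$-triangle $A^+\xrightarrow{x_1}B_1\xrightarrow{y_1}C\xrightarrow{(a^+)_\ast\delta}$, a morphism of $\EE$-triangles $(a^+,\beta,1_C)$, and an $\EE$-triangle $B\xrightarrow{\beta}B_1\to U_A\dashrightarrow$, so $\underline\beta$ is invertible. Next, as $A^+\in\W^\perp=(\W^\vee)^\perp$ by Lemma~\ref{lem1}(4) and $U_C\in\W^\vee$, Lemma~\ref{lem:long} applied to $C\xrightarrow{c^+}C^+\to U_C\dashrightarrow$ shows $\EE^2(U_C,A^+)=0$, hence $(c^+)^\ast\colon\EE(C^+,A^+)\to\EE(C,A^+)$ is onto; choose $\rho\in\EE(C^+,A^+)$ with $(c^+)^\ast\rho=(a^+)_\ast\delta$. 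Realizing $\rho$ and pulling back along the inflation $c^+$ recovers $A^+\to B_1\to C\dashrightarrow$ and yields an $\EE$-triangle $A^+\xrightarrow{x_2}B_2\xrightarrow{y_2}C^+\xrightarrow{\rho}$, a morphism $(1_{A^+},b_2,c^+)$, and an $\EE$-triangle $B_1\xrightarrow{b_2}B_2\to U_C\dashrightarrow$, so $\underline{b_2}$ is invertible. Composing, $(a^+,\,b_2\beta,\,c^+)$ is a morphism of $\EE$-triangles $(A\xrightarrow{x}B\xrightarrow{y}C)\to(A^+\xrightarrow{x_2}B_2\xrightarrow{y_2}C^+)$ all of whose components are invertible in $\B/(\thick\W)$.

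Now I replace $A^+$ and $C^+$ by $Z_A$ and $Z_C$. Pulling $A^+\to B_2\to C^+\dashrightarrow$ back along the deflation $z_C$ gives an $\EE$-triangle $A^+\to B_3\to Z_C\xrightarrow{(z_C)^\ast\rho}$, a morphism $(1_{A^+},b_3,z_C)$, and an $\EE$-triangle $V^C\to B_3\xrightarrow{b_3}B_2\dashrightarrow$, so $\underline{b_3}$ is invertible by Corollary~\ref{BMLcor}. Since $Z_C\in\Z\subseteq{}^\perp\W={}^\perp(\W^\wedge)$ by Lemma~\ref{lem1}(4) and $V^A\in\W^\wedge$, Lemma~\ref{lem:long} gives $\EE^2(Z_C,V^A)=0$, so $(z_A)_\ast\colon\EE(Z_C,Z_A)\to\EE(Z_C,A^+)$ is onto and we may pick $\widetilde\rho$ with $(z_A)_\ast\widetilde\rho=(z_C)^\ast\rho$. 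Realize $\widetilde\rho$ by $Z_A\xrightarrow{\phi}E\xrightarrow{\psi}Z_C\xrightarrow{\widetilde\rho}$; since $Z_A,Z_C\in\Z$, the long exact sequences of Lemma~\ref{lem:long} force $\EE^i(\W,E)=0=\EE^i(E,\W)$ for all $i>0$, i.e.\ $E\in\Z$, and the whole $\EE$-triangle $Z_A\xrightarrow{\phi}E\xrightarrow{\psi}Z_C\dashrightarrow$ lies in $\Z$. Applying (ET4) to $V^A\to Z_A\xrightarrow{z_A}A^+\dashrightarrow$ and $Z_A\xrightarrow{\phi}E\xrightarrow{\psi}Z_C\dashrightarrow$ then produces an $\EE$-triangle $V^A\to E\xrightarrow{e}B_3'\dashrightarrow$ together with a commutative diagram one of whose columns is an $\EE$-triangle $A^+\to B_3'\to Z_C\dashrightarrow$ realizing $(z_A)_\ast\widetilde\rho=(z_C)^\ast\rho$; uniqueness of realizations identifies $B_3'$ with $B_3$, so $\underline e$ is invertible and $(z_A,e,1_{Z_C})$ is a morphism $(Z_A\to E\to Z_C)\to(A^+\to B_3\to Z_C)$. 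Composing with $(1_{A^+},b_3,z_C)$ yields a morphism $(z_A,\,b_3e,\,z_C)$ of $\EE$-triangles $(Z_A\xrightarrow{\phi}E\xrightarrow{\psi}Z_C)\to(A^+\xrightarrow{x_2}B_2\xrightarrow{y_2}C^+)$ with all components invertible in $\B/(\thick\W)$. The two morphisms now built exhibit $A\xrightarrow{\underline x}B\xrightarrow{\underline y}C$ as isomorphic in $\B/(\thick\W)$ to $Z_A\xrightarrow{\underline\phi}E\xrightarrow{\underline\psi}Z_C$, the isomorphisms being $\underline{z_A}^{-1}\underline{a^+}$, $\underline{b_3e}^{-1}\underline{b_2\beta}$, $\underline{z_C}^{-1}\underline{c^+}$, which gives the first diagram; and feeding the $\EE$-triangle $Z_A\xrightarrow{\phi}E\xrightarrow{\psi}Z_C\dashrightarrow$ of $\Z$ together with the morphism $1_{Z_A}$ into the construction of $\langle 1\rangle$ produces exactly the second diagram, with $\widetilde{Z_A}=Z_A=G(A)$, $\widetilde{Z_B}=E$, $\widetilde{Z_C}=Z_C$, $\widetilde{z_x}=\phi$, $\widetilde{z_y}=\psi$.

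The delicate point — and the reason for the two-stage procedure and for the order ``first replace on the $C$-side, then lift on the $A$-side'' — is the invertibility of the middle comparison morphisms, above all of $e$. Each extension lifting used above succeeds precisely because of the vanishing $\EE^i(\W^\vee,\W^\perp)=0=\EE^i({}^\perp\W,\W^\wedge)$ for $i>0$ coming from Lemma~\ref{lem1}(4), i.e.\ from condition (CP); and each ``difference object'' ($U_A,U_C\in\W^\vee$ and $V^A,V^C\in\W^\wedge$) lies in $\thick\W$, so that Corollary~\ref{BMLcor} and the definition of $\R$ turn the comparisons into isomorphisms. The use of (ET4) — rather than Proposition~\ref{prop:NPLN} — in the last step is what allows one to produce the witnessing $\EE$-triangle $V^A\to E\to B_3\dashrightarrow$ even though $z_A$ is only a deflation, not an inflation; verifying that the column of the (ET4) diagram realizes the expected extension class is the one remaining routine check.
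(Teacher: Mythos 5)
Your proof is correct, and it takes a genuinely different route from the paper's. The paper composes four (ET4)-type diagrams — pushout of the given $\EE$-triangle along $a^+$, then along a minimal left $\W^\perp$-approximation $\widetilde{b}^+$ of the resulting middle term, then pullbacks along minimal right ${}^\perp\W$-approximations $\widetilde{z_C}$ and $\widetilde{z_B}$ — arriving at an $\EE$-triangle $\widetilde{Z_A}\to\widetilde{Z_B}\to\widetilde{Z_C}\dashrightarrow$ in $\Z$ whose first term is a priori not $Z_A$. It then needs a separate argument (minimality of $z_A$ plus the Krull--Schmidt property and $\Z\cap\W^\wedge=\W$) to show $\widetilde{Z_A}\cong Z_A\oplus W^A$ with $W^A\in\W$, and absorbs $W^A$ into the injective hull $W_A$ of $Z_A$ to produce the claimed second diagram with $\widetilde{W}=W_A\oplus W^A$. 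You instead intersperse two \emph{extension-class liftings} — first lifting $(a^+)_*\delta$ along $(c^+)^*:\EE(C^+,A^+)\twoheadrightarrow\EE(C,A^+)$ (surjective since $\EE^2(\W^\vee,\W^\perp)=0$), and later lifting $(z_C)^*\rho$ along $(z_A)_*:\EE(Z_C,Z_A)\twoheadrightarrow\EE(Z_C,A^+)$ (surjective since $\EE^2({}^\perp\W,\W^\wedge)=0$) — between a single pushout along $a^+$ and a single pullback along $z_C$. This is slightly more economical: it lands on an $\EE$-triangle $Z_A\to E\to Z_C\dashrightarrow$ that begins exactly at $Z_A=G(A)$, so the Krull--Schmidt ``summand extraction'' step and the resulting decoration of $W_A$ by $W^A$ disappear, and $\widetilde{W}$ can be taken to be $W_A$ itself.

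One small remark on presentation. In your steps~2 and~3 you assert ``bonus'' $\EE$-triangles $B_1\xrightarrow{b_2}B_2\to U_C\dashrightarrow$ and $V^C\to B_3\xrightarrow{b_3}B_2\dashrightarrow$; these do hold, but they are not literally supplied by the dual of Proposition~\ref{prop:NPLN}, which only gives the ``Mayer--Vietoris'' $\EE$-triangle $B_1\to B_2\oplus C\to C^+\dashrightarrow$ (resp.\ $B_3\to B_2\oplus Z_C\to C^+\dashrightarrow$). They follow from (ET4$^{\mathrm{op}}$) applied to the composable deflations $B_2\xrightarrow{y_2}C^+\twoheadrightarrow U_C$ (resp.\ to the pullback square for $z_C$) together with uniqueness of realizations, exactly as you spell out with (ET4) in the final step; it would be worth saying so, since these two auxiliary $\EE$-triangles are precisely what makes $\underline{b_2}$ and $\underline{b_3}$ invertible. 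With that clarification the argument is complete.
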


\begin{proof}
We have the following commutative diagrams
$$\xymatrix@C=0.8cm@R0.7cm{
A \ar[r]^x \ar[d]_{a^+} &B \ar[r]^y \ar[d]^{\widetilde{b}} &C\ar@{-->}[r] \ar@{=}[d] &\\
A^+ \ar[r]_{\widetilde{x}} \ar[d] &\widetilde{B} \ar[d] \ar[r]_{\widetilde{y}} &C \ar@{-->}[r] &\\
U_A \ar@{=}[r] \ar@{-->}[d] &U_A \ar@{-->}[d]\\
&&\\
}\quad \quad \xymatrix@C=0.8cm@R0.7cm{
A^+ \ar[r]^{\widetilde{x}} \ar@{=}[d] &\widetilde{B} \ar[r]^{\widetilde{y}} \ar[d]^{\widetilde{b}^+} &C \ar[d]^{(c^+)'} \ar@{-->}[r] &\\
A^+ \ar[r] &\widetilde{B}^+ \ar[r] \ar[d] &(C^+)' \ar[d] \ar@{-->}[r] &\\
&U_{\widetilde{B}} \ar@{-->}[d] \ar@{=}[r] &U_{\widetilde{B}} \ar@{-->}[d]\\
&&&
}
$$
$$\xymatrix@C=0.8cm@R0.7cm{
&(V^C)' \ar[d] \ar@{=}[r] &(V^C)' \ar[d]\\
A^+ \ar[r]^{\widehat{x}} \ar@{=}[d] &(\widetilde{B}^+)' \ar[r]^{\widehat{y}} \ar[d]^{\widehat{b}} &\widetilde{Z_C} \ar[d]^{\widetilde{z_C}} \ar@{-->}[r] &\\
A^+ \ar[r] &\widetilde{B}^+ \ar[r] \ar@{-->}[d] &(C^+)' \ar@{-->}[d] \ar@{-->}[r] &\\
&&&\\
}\quad \quad \xymatrix@C=0.8cm@R0.7cm{
(V^B)' \ar[d] \ar@{=}[r] & (V^B)' \ar[d]\\
\widetilde{Z_A} \ar[r]^{\widetilde{z_x}} \ar[d]_{\widetilde{z_A}} &\widetilde{Z_B} \ar[r]^{\widetilde{z_y}} \ar[d]_{\widetilde{z_B}} &\widetilde{Z_C} \ar@{=}[d] \ar@{-->}[r] &\\
A^+ \ar[r] \ar@{-->}[d] &(\widetilde{B}^+)' \ar[r] \ar@{-->}[d] &\widetilde{Z_C} \ar@{-->}[r] &\\
&&
}
$$
where $U_A,U_{\widetilde{B}}\in \W^{\vee}$, $(V^C)',(V^B)'\in \W^{\wedge}$, $a^+,\widetilde{b}^+$ are minimal left $(\W^{\bot})$-approximations and $\widetilde{z_C},\widetilde{z_B}$ are minimal right $({^{\bot}}\W)$-approximations. Then we have the following commutative diagram.
$$\xymatrix@C=0.8cm@R0.7cm{
A\ar[r]^{\underline x} \ar[d]_{\underline{(\widetilde{z_A})^{-1}a^+}}^{\simeq} &B\ar[rrr]^{\underline y} \ar[d]^{ \underline{(\widetilde{z_B})^{-1}(\widehat{b})^{-1}\widetilde{b}^+\widetilde{b}}}_{\simeq} &&&C\ar[d]^{\underline {(\widetilde{z_C})^{-1}(c^+)'}}_{\simeq}\\
\widetilde{Z_A} \ar[r]_{\underline {\widetilde{z_x}}} &\widetilde{Z_B} \ar[rrr]_{\underline {\widetilde{z_y}}} &&&\widetilde{Z_C}
}
$$
We also have the following commutative diagram
$$\xymatrix@C=0.8cm@R0.7cm{
V^A \ar[r] \ar[d] &Z_A \ar[r]^{z_A} \ar[d]^r &A^+ \ar@{=}[d] \ar@{-->}[r] &\\
(V^B)' \ar[r] \ar[d] &\widetilde{Z_A} \ar[r]^{\widetilde{z_A}} \ar[d]^s &A^+ \ar@{-->}[r] \ar@{=}[d] &\\
V^A \ar[r] &Z_A \ar[r]^{z_A} &A^+ \ar@{-->}[r]&
}
$$
where $z_A$ is a minimal right $({^{\bot}}\W)$-approximation. Hence $sr$ is an isomorphism and $Z_A$ is a direct summand of $\widetilde{Z_A}$. For convenience, we can assume that $sr=1$. On the other hand, $\widetilde{Z_A}$ is a direct summand of $Z_A\oplus V^A$ since $1-rs$ factors through $V_A$. Thus $\widetilde{Z_A}\simeq Z_A\oplus W^A$ with $W^A\in \W$. Let $\widetilde{Z_A}\xrightarrow{\svecv{s}{u}}Z_A\oplus W^A$ be an isomorphism and $Z_A\oplus W^A\xrightarrow{\svech{r}{v}}\widetilde{Z_A}$ be its inverse. We have the following commutative diagram.
$$\xymatrix@C=0.8cm@R0.7cm{
\widetilde{Z_A} \ar[r]^{\widetilde{z_x}} \ar[d]_{\svecv{s}{u}} &\widetilde{Z_B} \ar[r]^{\widetilde{z_y}} \ar[d] &\widetilde{Z_C} \ar[d]^{a} \ar@{-->}[r] &\\
Z_A\oplus W^A \ar[d]_{\svech{r}{v}} \ar[r]^-{{\left(\begin{smallmatrix}
w_A&0\\
0&1\\
\end{smallmatrix}\right)}} &W_A\oplus W^A \ar[d]^-{{\left(\begin{smallmatrix}
1&0\\
0&1\\
\end{smallmatrix}\right)}} \ar[r]^-{\svech{v_A}{0}} &Z_A\langle1\rangle \ar@{-->}[r] \ar[d]^b &\\
\widetilde{Z_A} \ar[r]_-{\svecv{w_As}{u}} &W_A\oplus W^A \ar[r]_-{\svech{v_A}{0}} &Z_A\langle1\rangle \ar@{-->}[r]&
}
$$
For convenience, we denote morphism $ba$ by $\widetilde{z}$.
\end{proof}

\begin{rem}
Denote $F\circ\langle1\rangle\circ H$ by $\underline {[1]}$. Note that for any object $A$, $A\underline{[1]}=Z_A\langle1\rangle$ in $\B/(\thick\W)$.
We have the following commutative diagram in $\B/(\thick\W)$
$$\xymatrix@C=0.8cm@R0.7cm{
A\ar[r]^{\underline x} \ar[d]^{\simeq} &B\ar[r]^{\underline y} \ar[d]^{\simeq} &C\ar[d]^{\simeq} \ar[rr]^-{\underline {\widetilde{z}(\widetilde{z_C})^{-1}(c^+)'}} &&A\underline{[1]} \ar@{=}[d]\\
\widetilde{Z_A} \ar[r]_{\underline {\widetilde{z_x}}} &\widetilde{Z_B} \ar[r]_{\underline {\widetilde{z_y}}} &\widetilde{Z_C} \ar[rr]_{\underline {\widetilde{z}}} &&Z_A\langle1\rangle
}
$$
where $\widetilde{Z_A} \xrightarrow{\overline {\widetilde{z_x}}} \widetilde{Z_B} \xrightarrow{\overline {\widetilde{z_y}}} \widetilde{Z_C} \xrightarrow{\overline {\widetilde{z}}} Z_A\langle1\rangle$ is a distinguished triangle in $\Z/[\W]$. Moreover, $(\underline{(\widetilde{z_A})^{-1}a^+})\underline{[1]}=\underline 1_{A\underline{[1]}}$
\end{rem}

\begin{defn}\label{tri}
We call a sequence $X\xrightarrow{\alpha} Y\xrightarrow{\beta} Z\xrightarrow{\gamma} X\underline{[1]}$ a triangle in $\B/(\thick\W)$ if we have an commutative diagram in $\B/(\thick\W)$
$$\xymatrix@C=0.8cm@R0.7cm{
X \ar[r]^{\alpha} \ar[d]_{\simeq}^{\phi_1} &Y \ar[r]^{\beta} \ar[d]_{\simeq}^{\phi_2} &Z \ar[rr]^{\gamma} \ar[d]_{\simeq}^{\phi_3} &&X\underline{[1]} \ar[d]_{\simeq}^{\phi_1\underline{[1]}}\\
A \ar[r]_{\underline x} &B \ar[r]_{\underline y} &C \ar[rr]_{\underline {\widetilde{z}(\widetilde{z_C})^{-1}(c^+)'}} &&A\underline{[1]}
}
$$
where the second row is induced by an $\EE$-triangle $A\xrightarrow{x} B\xrightarrow{y} C\dashrightarrow$ as in Proposition \ref{induce1}. The triangles induced by $\EE$-triangles in $\B$ are called distinguished triangles in $\B/(\thick \W)$.
\end{defn}

\begin{thm}\label{main6}
$\B/(\thick \W)$ is a triangulated category with triangles defined in Definition \ref{tri} and shift functor $\underline{[1]}$. Moreover, the equivalence $H$ is a triangle equivalence.
\end{thm}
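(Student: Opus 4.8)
The plan is to transport the triangulated structure of $\Z/[\W]$ along the equivalence $H:\B/(\thick\W)\xrightarrow{\sim}\Z/[\W]$ of Theorem~\ref{main5} (with quasi-inverse $F$ from Proposition~\ref{quasi}), and then check that the candidate triangles of Definition~\ref{tri} are exactly the images under $F$ of the distinguished triangles of $\Z/[\W]$. First I would observe that the shift $\underline{[1]}=F\circ\langle1\rangle\circ H$ is, by construction, an autoequivalence of $\B/(\thick\W)$, since $\langle1\rangle$ is an autoequivalence of $\Z/[\W]$ and $H,F$ are mutually quasi-inverse. Next, define a sextuple in $\B/(\thick\W)$ to be distinguished in the transported sense if its image under $H$ is isomorphic (as a sextuple, via the natural isomorphism $H\underline{[1]}\cong\langle1\rangle H$) to a distinguished triangle of $\Z/[\W]$. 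Because $H$ is an equivalence intertwining the two shift functors, this transported class automatically satisfies all the octahedral/rotation/completion axioms: every axiom is preserved by an equivalence of categories that commutes (up to the chosen natural isomorphism) with the suspension. This is the standard fact that a triangulated structure can be pulled back along an equivalence, and it is the conceptual core of the proof.

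**Matching the two notions of triangle.**
The real content is then to verify that the transported class coincides with the class described in Definition~\ref{tri}, i.e. that the triangles built from $\EE$-triangles of $\B$ via Proposition~\ref{induce1} are precisely the $F$-images of distinguished triangles of $\Z/[\W]$, and conversely. One direction is essentially Proposition~\ref{induce1} together with the Remark following it: given an $\EE$-triangle $A\xrightarrow{x}B\xrightarrow{y}C\dashrightarrow$ in $\B$, the proposition produces an isomorphism in $\B/(\thick\W)$ from the sequence $\underline x,\underline y,\underline{\widetilde z(\widetilde{z_C})^{-1}(c^+)'}$ to the sequence $\underline{\widetilde{z_x}},\underline{\widetilde{z_y}},\underline{\widetilde z}$, and the latter is the $F$-image of the distinguished triangle $\widetilde{Z_A}\xrightarrow{\overline{\widetilde{z_x}}}\widetilde{Z_B}\xrightarrow{\overline{\widetilde{z_y}}}\widetilde{Z_C}\xrightarrow{\overline{\widetilde z}}Z_A\langle1\rangle$ in $\Z/[\W]$, using that $F$ is identity on objects and on morphisms coming from $\Z$, and that $\widetilde{Z_A}\cong Z_A=A\underline{[1]}\langle{-1}\rangle$ in $\B/(\thick\W)$ up to the direct summand in $\W$. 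For the converse, I would use that every distinguished triangle of $\Z/[\W]$ arises, by the Frobenius construction recalled before Lemma~\ref{lem:iso}, from an $\EE$-triangle $Z_A\xrightarrow{f}Z_B\xrightarrow{g}Z_C\dashrightarrow$ in $\Z$ (up to rotation and the canonical conflation $A\to W_A\to A\langle1\rangle$); such an $\EE$-triangle is in particular an $\EE$-triangle of $\B$, so running Proposition~\ref{induce1} on it and tracking that $Z_{(Z_A)}\cong Z_A$ (since objects of $\Z$ are already $\W^{\bot}$- and ${}^{\bot}\W$-closed, so $b^+$ and $z_B$ can be taken to be identities up to summands in $\W$) shows that the induced triangle in $\B/(\thick\W)$ is isomorphic to the prescribed one. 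Combining the two directions gives that Definition~\ref{tri}'s distinguished triangles are exactly the transported ones, hence $\B/(\thick\W)$ with shift $\underline{[1]}$ is triangulated and $H$ sends distinguished triangles to distinguished triangles, i.e. $H$ is a triangle equivalence.

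**Main obstacle.**
The chief technical nuisance is bookkeeping around the non-uniqueness of the chosen $\EE$-triangles and of the objects $B^+$, $Z_B$, $\widetilde{Z_A}$: one must make sure that replacing $Z_A$ by $\widetilde{Z_A}=Z_A\oplus W^A$ (with $W^A\in\W$) and the various approximations by minimal ones does not change anything in $\B/(\thick\W)$ or in $\Z/[\W]$. This is handled by Lemma~\ref{BML} and Lemma~\ref{BMLcor} (morphisms through $\thick\W$, resp. $[\W]$, become zero/invertible) together with the standard fact that minimal approximations differ from arbitrary ones by a direct summand in $\W$; all of this was already set up in Section~\ref{sec:red}. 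The remaining verifications — that the connecting morphism $\underline{\widetilde z(\widetilde{z_C})^{-1}(c^+)'}$ corresponds under $H$ to $\overline{\widetilde z}$, and that the naturality square $H\underline{[1]}\cong\langle1\rangle H$ is compatible with these connecting morphisms — are a direct unwinding of the definitions of $G$, $H$, $F$ and $\underline{[1]}$, with no new ideas required. So the proof is ``transport of structure along an equivalence, plus a diagram chase identifying the two descriptions of the triangles,'' and I expect the write-up to consist mostly of assembling Proposition~\ref{induce1}, Theorem~\ref{main5}, Proposition~\ref{quasi} and the Frobenius triangulation of $\Z/[\W]$ into this single statement.
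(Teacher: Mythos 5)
Your proposal is correct and uses essentially the paper's approach: transport the triangulated structure of $\Z/[\W]$ along the equivalence $H$ with quasi-inverse $F$, then identify Definition~\ref{tri}'s triangles with the transported ones. The one place where you diverge is in closing the argument. You propose to prove both inclusions of triangle classes directly, with the converse requiring the ``tracking'' of connecting morphisms (that the $\widetilde z$ produced by Proposition~\ref{induce1} when the input $\EE$-triangle already lives in $\Z$ agrees up to triangle isomorphism with the connecting morphism of the Frobenius triangulation of $\Z/[\W]$), which you rightly flag as the main bookkeeping burden. The paper sidesteps that bookkeeping: it proves only the easy inclusion (Definition~\ref{tri} triangles land in the transported class, via Proposition~\ref{induce1}), and then shows that every morphism $X\xrightarrow{\alpha}Y$ in $\B/(\thick\W)$ completes to a Definition~\ref{tri} triangle by lifting $\alpha$ to $z_1:Z_X\to Z_Y$ in $\Z$, completing $z_1$ to a distinguished triangle of $\Z/[\W]$ via the Frobenius structure, and then passing through Proposition~\ref{prop:NPLN} to an actual $\EE$-triangle $Z_X\to Z_Y\oplus W_X\to Z\dashrightarrow$ in $\B$ inducing the required triangle; uniqueness of cones in the (already-known) transported triangulated structure then forces the two triangle classes to coincide. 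Both routes are sound and use the same ingredients; yours is more symmetric, the paper's is a bit leaner because it trades the general converse for a single construction.
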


\begin{proof}
We know $\underline {[1]}=F\circ \langle 1 \rangle\circ H$ is an auto-equivalence. Moreover, by Proposition \ref{quasi}, we have $\underline{[1]}\circ F=F\circ  \langle 1 \rangle$ and $H\circ \underline {[1]} \cong \langle 1 \rangle\circ H$.

Since $HF=\Id_{\Z/[\W]}$, and $F$ is identical on objects, the image of $\Z/[\W]$ is a triangulated subcategory in $\B/(\thick\W)$ with shift functor $\underline {[1]}|_{\Z}$ and distinguished triangles which are induced by the images of triangles in $\Z/[\W]$. Since $\Z$ is dense in $\B/(\thick\W)$, by definition and Proposition \ref{induce1}, any triangle in $\B/(\thick\W)$ is isomorphic to the image of a triangle in $\Z/[\W]$. To show that $\B/(\thick \W)$ is triangulated, we only need to check that any morphism $X\xrightarrow{\alpha} Y$ in $\B/(\thick \W)$ admits a triangle $X\xrightarrow{\alpha} Y\xrightarrow{\beta} Z\xrightarrow{\gamma} X\underline{[1]}$.

By the proof of Theorem \ref{main5}, we have the following commutative diagram
$$\xymatrix@C=0.8cm@R0.7cm{
X\ar[r]^{\alpha} \ar[d]_{\underline {z_X^{-1}x^+}}^{\simeq}  &Y \ar[d]^{\underline {z_Y^{-1}y^+}}_{\simeq}\\
Z_X \ar[r]_{\underline z_1} &Z_Y
}
$$
where $z_1\in \Hom_{\B}(Z_X,Z_Y)$. $z_1$ admits the following commutative diagram
$$\xymatrix@C=0.8cm@R0.7cm{
Z_X \ar[d]_{z_1} \ar[r] &W_X \ar[r] \ar[d] &Z_X\langle1\rangle \ar@{=}[d] \ar@{-->}[r] &\\
Z_Y \ar[r]_{z_2} &Z \ar[r]_{z_3} &Z_X\langle1\rangle \ar@{-->}[r] &
}
$$
with $W_X\in \W$ and $Z\in \Z$. Then we have the following commutative diagram
$$\xymatrix@C=0.8cm@R0.7cm{
Z_X \ar[r]^-{\svecv{z_1}{*}} \ar@{=}[d] &Z_Y\oplus W_X \ar[r]^-{\svech{z_2}{*}} \ar[d] &Z \ar[d]^{-z_3} \ar@{-->}[r] &\\
Z_X \ar[r] &W_X \ar[r] &Z_X\langle1\rangle \ar@{-->}[r] &
}
$$
which induces a distinguished triangle $Z_X\xrightarrow{\underline z_1}Z_Y\xrightarrow{\underline z_2} Z\xrightarrow{-\underline z_3} Z_X\langle1\rangle$ in $\B/(\thick \W)$. Then we have a commutative diagram.
$$\xymatrix@C=0.8cm@R0.7cm{
X\ar[r]^{\alpha} \ar[d]_{\underline {z_X^{-1}x^+}}^{\simeq}  &Y \ar[d]^{\underline {z_Y^{-1}y^+}}_{\simeq} \ar[rr]^-{\underline {z_2z_Y^{-1}y^+}} &&Z \ar@{=}[d] \ar[r]^{-\underline z_3} &X\underline{[1]} \ar@{=}[d]\\
Z_X \ar[r]_{\underline z_1} &Z_Y \ar[rr]_{\underline z_2} &&Z \ar[r]_{-\underline z_3} &Z_X\langle1\rangle
}
$$
Note that $(\underline {z_X^{-1}x^+})\underline {[1]}=\underline 1_{X\underline{[1]}}$,  we find that the sequence $X\xrightarrow{\alpha} Y \xrightarrow{\underline {zz_X^{-1}x^+}} Z \xrightarrow{-\underline z_3} X\underline{[1]}$ is the triangle we need.
\end{proof}

By Lemma \ref{psilz}, Corollary \ref{silzw}, Theorem \ref{main5} and Theorem \ref{main6}, we have the following corollary.

\begin{cor}\label{cor:corr}
There is a one-to-one correspondence between the presilting subcategories in $\B/(\thick\W)$ and the presilting subcategories in $\B$ which contain $\W$. This correspondence also induces a one-to-one correspondence between the silting subcategories in $\B/(\thick\W)$ and the silting subcategories in $\B$ which contain $\W$.
\end{cor}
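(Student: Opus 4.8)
The plan is to assemble the statement from identifications already obtained, following the chain $\B\rightsquigarrow\Z\rightsquigarrow\Z/[\W]\rightsquigarrow\B/(\thick\W)$ and treating presilting and silting subcategories in parallel. First I would observe that a presilting subcategory $\s$ of $\B$ with $\W\subseteq\s$ --- and likewise a presilting subcategory of the triangulated category $\B/(\thick\W)$, in the sense that its positive self-extension groups vanish --- automatically satisfies $\s\subseteq\Z$: from $\W\subseteq\s$ and $\EE^i(\s,\s)=0$ for $i>0$ one gets $\EE^i(\W,\s)=\EE^i(\s,\W)=0$, so $\s\subseteq{}^\perp\W\cap\W^\perp=\Z$. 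Since $\W$ is closed under isomorphisms and direct summands, $\s\mapsto\overline\s$ is then an inclusion-preserving bijection between the subcategories $\s$ of $\Z$ containing $\W$ and the subcategories of $\Z/[\W]$, with $\W\leftrightarrow 0$ and inverse ``lift objects along $\Z\to\Z/[\W]$''.

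Next I would match the relevant classes along each arrow. For $\B\rightsquigarrow\Z$: Lemma~\ref{psilz} says a subcategory $\s\supseteq\W$ is presilting in $\B$ iff it is presilting in $\Z$, and Proposition~\ref{silz} says the same for ``silting''. For $\Z\rightsquigarrow\Z/[\W]$: by Lemma~\ref{lem:Fro}, $\Z$ is Frobenius with $\W$ its projective-injectives, so the triangulated structure on $\Z/[\W]$ realizes, for $i\geq1$, the group $\EE^i_\Z(X,Y)$ --- which equals $\EE^i(X,Y)$ by Lemma~\ref{lem:formula} --- as a morphism group in $\Z/[\W]$ after $i$ applications of the suspension $\langle1\rangle$; hence $\s\supseteq\W$ is presilting in $\Z$ iff $\overline\s$ is presilting in the triangulated category $\Z/[\W]$, and, invoking \cite[Theorem~1.1]{MDZH} (or combining Corollary~\ref{silzw} with Proposition~\ref{silz}), $\s$ is silting in $\Z$ iff $\overline\s$ is silting in $\Z/[\W]$. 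For $\Z/[\W]\rightsquigarrow\B/(\thick\W)$: by Theorems~\ref{main5} and~\ref{main6}, $H\colon\B/(\thick\W)\to\Z/[\W]$ is a triangle equivalence with quasi-inverse $F$, and a triangle equivalence preserves both the vanishing of positive self-extension groups and the property of generating the ambient category as a thick subcategory, hence carries presilting (resp.\ silting) subcategories bijectively to presilting (resp.\ silting) subcategories. Composing the three bijections gives the desired correspondence; on objects it is $\s\mapsto\add L_\R(\s)$, which equals the $F$-image of $\overline\s$ since $L_\R\circ\eta=F\circ\pi$ and $\s\subseteq\Z$, with inverse obtained by lifting along $H$.

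The one ingredient not already packaged in the excerpt --- and hence the single point where something has to be checked rather than quoted --- is the presilting half of the middle arrow: that ``presilting'' for the extriangulated structure on the Frobenius category $\Z$ coincides with ``presilting'' for the triangulated structure on $\Z/[\W]$. This is the routine Frobenius computation identifying stable morphism groups with the groups $\EE^i_\Z$, resting on Lemma~\ref{lem:formula}. Once it is in place the corollary is purely formal, since the outer two arrows, and the silting half of the middle arrow, are all either proved earlier in the paper or immediate from $H$ being a triangle equivalence.
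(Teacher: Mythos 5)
Your proposal is correct and follows essentially the same chain of identifications the paper invokes ($\B \leftrightarrow \Z$ via Lemma~\ref{psilz}/Proposition~\ref{silz}, $\Z \leftrightarrow \Z/[\W]$ via the Frobenius structure, $\Z/[\W] \leftrightarrow \B/(\thick\W)$ via the triangle equivalence $H$), and you correctly observe that $\W\subseteq\s$ with $\s$ presilting forces $\s\subseteq\Z$, which is the implicit reason Lemma~\ref{psilz} applies. You have also accurately flagged the one small link not literally covered by the cited results --- that ``presilting in $\Z$'' and ``presilting in $\Z/[\W]$'' coincide --- and your sketch of it via Lemma~\ref{lem:formula} and the Frobenius identification of $\EE^i_\Z$ with stable $\Hom$ groups in $\Z/[\W]$ is the routine argument the paper is tacitly relying on.
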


\section{Silting subcategories vs tilting subcategories}

In this section, we discuss the relation between silting categories and tilting subcategories. This gives us a kind of important examples of the results in Section 3 and Section 4.

\begin{lem}\label{lem:leq+1}
Let $A\to B\to C\dashrightarrow$ be an $\EE$-triangle. Then we have $$\id A\leq \max\{\id B,\id C+1\},\ \id B\leq\max\{\id A,\id C\},\id C\leq \max\{\id B,\id A- 1 \},\text{ and}$$
$$\pd A\leq\max\{\pd B,\pd C-1\},\ \pd B\leq\max\{\pd A,\pd C\},\ \pd C\leq \max\{\pd A+1,\pd B\}.$$
\end{lem}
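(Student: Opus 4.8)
The plan is to extract all six inequalities from the two long exact sequences attached to the $\EE$-triangle by Lemma~\ref{lem:long}, using only the characterization of $\pd$ and $\id$ as vanishing thresholds: if $n>\pd X$ then $\EE^{n}(X,Y)=0$ for every $Y$, and dually if $n>\id Y$ then $\EE^{n}(X,Y)=0$ for every $X$.

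First I would settle the three inequalities for $\pd$. Fix an arbitrary object $Y$ and apply the second long exact sequence of Lemma~\ref{lem:long} to $A\xrightarrow{x}B\xrightarrow{y}C\dashrightarrow$:
$$\cdots\longrightarrow\EE^{i-1}(A,Y)\longrightarrow\EE^{i}(C,Y)\longrightarrow\EE^{i}(B,Y)\longrightarrow\EE^{i}(A,Y)\longrightarrow\EE^{i+1}(C,Y)\longrightarrow\cdots .$$
Exactness at $\EE^{i}(B,Y)$ shows $\EE^{i}(A,Y)=\EE^{i}(C,Y)=0$ forces $\EE^{i}(B,Y)=0$; exactness at $\EE^{i}(A,Y)$ shows $\EE^{i}(B,Y)=\EE^{i+1}(C,Y)=0$ forces $\EE^{i}(A,Y)=0$; exactness at $\EE^{i}(C,Y)$ shows $\EE^{i-1}(A,Y)=\EE^{i}(B,Y)=0$ forces $\EE^{i}(C,Y)=0$. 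Since these implications hold for all $Y$ and all $i$, we read off $\pd B\leq\max\{\pd A,\pd C\}$, $\pd A\leq\max\{\pd B,\pd C-1\}$, and $\pd C\leq\max\{\pd A+1,\pd B\}$. (The argument is uniform in whether the dimensions are finite: when a right-hand side is $\infty$ there is nothing to prove; otherwise, calling it $m<\infty$, the implications kill the relevant $\EE^{\bullet}(\,\cdot\,,Y)$ for all $i>m$ and all $Y$, which is exactly the claimed bound.)

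For the three inequalities involving $\id$ I would run the symmetric argument: fix an arbitrary $X$ and apply the first long exact sequence of Lemma~\ref{lem:long},
$$\cdots\longrightarrow\EE^{i-1}(X,C)\longrightarrow\EE^{i}(X,A)\longrightarrow\EE^{i}(X,B)\longrightarrow\EE^{i}(X,C)\longrightarrow\EE^{i+1}(X,A)\longrightarrow\cdots ,$$
reading exactness at the three consecutive terms $\EE^{i}(X,B)$, $\EE^{i}(X,C)$, $\EE^{i}(X,A)$ exactly as before. This yields $\id B\leq\max\{\id A,\id C\}$, $\id C\leq\max\{\id B,\id A-1\}$, and $\id A\leq\max\{\id B,\id C+1\}$. (Equivalently, this is the $\pd$-statement applied in the opposite extriangulated category $\B^{\op}$ to the $\EE^{\op}$-triangle $C\to B\to A\dashrightarrow$, using Lemma~\ref{lem:high} to identify $\EE^{n}_{\B^{\op}}(X,Y)\cong\EE^{n}_{\B}(Y,X)$ and hence $\pd_{\B^{\op}}=\id_{\B}$.)

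There is no genuine obstacle here; the only point demanding attention is the index bookkeeping in the two ``shifted'' cases — tracking whether an $\EE^{i}$ or an $\EE^{i+1}$ sits at the relevant spot and converting its vanishing range correctly into the $\pm1$ inside the maxima — together with a harmless check that the conventions $\EE^{0}=\Hom$ and $\sup\varnothing$ cause no trouble at the bottom of the sequences.
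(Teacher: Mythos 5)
Your proof is correct and takes essentially the same route as the paper, which simply cites Lemma~\ref{lem:long} and leaves the index bookkeeping implicit; you have just spelled out the exactness checks and the $\pm1$ shifts explicitly.
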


\begin{proof}
This follows directly from the exact sequence in Lemma~\ref{lem:long}.
\end{proof}

We give the following definition of tilting subcategories in extriangulated categories, which is sightly different from \cite[Defintion 7]{ZhZ}. Note that by \cite[Remark 4]{ZhZ}, the tilting subcategory defined in \cite[Defintion 7]{ZhZ} also satisfies the following definition.

\begin{defn}\label{deftil}
A subcategory $\T$ of $\B$ is called partial $n$-tilting ($n\geq 1$) if the following hold.
\begin{itemize}
\item[(P1)] $\pd \T\leq n$.
\item[(P2)] $\EE^i(\T,\T)=0,i=1,2,...,n$.
\end{itemize}
A partial $n$-tilting subcategory $\T$ is called \emph{$n$-tilting} if the following holds.
\begin{itemize}
\item[(P3)] $\mathcal P\subseteq\T_n^{\vee}$.
\end{itemize}

Any partial $n$-tilting (resp. $n$-tilting) subcategory is simply called a \emph{partial tilting} (resp. \emph{tilting}) subcategory. An object $T$ is called  an $n$-tilting object if $\add T$ is  an $n$-tilting subcategory. The notion of \emph{$n$-cotilting} subcategories can be defined dually.
\end{defn}

\begin{rem}
If $\B$ is the category of finitely generated modules of a finite-dimensional algebra, then the $n$-tilting objects in $\B$ are exactly the $n$-tilting modules. But if $\B$ is a (non-zero) triangulated category, then there is no non-zero tilting subcategory of $\B$ because in this case $\mathcal P=\{0\}$ and for any non-zero subcategory $\T$ of $\B$, we have $\pd\T=\infty$.
\end{rem}

We have the following observation.

\begin{lem}\label{lem:easy}
Any partial tilting subcategory of $\B$ is presilting. If $\mathcal I\subseteq\thick\mathcal P$, then any tilting subcategory of $\B$ is silting in $\thick\mathcal P$.
\end{lem}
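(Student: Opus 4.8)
The plan is to verify the two assertions in turn, both relying on the already-available machinery: the long exact sequences of Lemma~\ref{lem:long}, the characterization of silting via cotorsion pairs in Theorem~\ref{main2}, and Lemma~\ref{main2cor}. First I would prove that a partial tilting subcategory $\T$ is presilting, i.e. $\EE^i(\T,\T)=0$ for all $i\geq 1$. Condition (P2) gives this for $1\leq i\leq n$, so the only issue is $i>n$. Here I would use (P1): since $\pd\T\leq n$, by definition $\EE^{n+1}(\T,Y)=0$ for every object $Y$ of $\B$, and more generally $\EE^{j}(\T,Y)=0$ for all $j>n$ (one sees this from Lemma~\ref{lem:high}, writing $\EE^{j}(T,Y)\cong\EE(\Omega^{n}(\Omega^{j-n-1}T),Y)$ and noting $\EE^{n+1}(X,Y)=0$ for all $X$ forces $\EE^{j}(X,Y)=0$ for $j\geq n+1$ by an induction on syzygies together with Lemma~\ref{lem:long} applied to $\Omega^{i}X\to P\to\Omega^{i-1}X\dashrightarrow$). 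Taking $Y\in\T$ finishes the first claim; this part is essentially routine bookkeeping with higher extensions.

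For the second assertion, assume $\mathcal I\subseteq\thick\mathcal P$ and let $\T$ be an $n$-tilting subcategory. Set $\C:=\thick\mathcal P$. By Lemma~\ref{main2cor} it suffices to show that $\T\subseteq\C$, that $\T$ is presilting (already done), and that $\mathcal P\cup\mathcal I\subseteq\thick_{\C}\T$ — equivalently, since $\mathcal I\subseteq\thick\mathcal P=\C$, that $\mathcal P\subseteq\thick\T$ inside $\C$ and that $\thick\T=\C$. The inclusion $\T\subseteq\C$ comes from (P1): every $T\in\T$ has a finite resolution by projectives, i.e. $T\in\mathcal P^{\wedge}$, hence $T\in\thick\mathcal P=\C$ because $\thick\mathcal P$ is closed under cones (and contains $\mathcal P$). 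The key point is then $\mathcal P\subseteq\thick\T$. This is exactly where condition (P3), $\mathcal P\subseteq\T_n^{\vee}$, is used: by definition of $\T_n^{\vee}$ every projective $P$ sits in a finite sequence of $\EE$-triangles with all other terms in $\add\T$, so $P\in\thick\T$. Consequently $\mathcal P\subseteq\thick\T\subseteq\C$, and conversely $\thick\T\subseteq\thick\mathcal P=\C$ since $\T\subseteq\C$ and $\C$ is thick; hence $\thick\T=\C$. Finally $\mathcal I\subseteq\C=\thick\T$ as well, so $\mathcal P\cup\mathcal I\subseteq\thick\T$, and Lemma~\ref{main2cor} yields that $\T$ is silting in $\C=\thick\mathcal P$.

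I expect the main obstacle to be the careful handling of the higher-extension vanishing in the first part — specifically, making precise that $\pd\T\leq n$ propagates to $\EE^{j}(\T,-)=0$ for all $j>n$, not merely $j=n+1$. This needs the isomorphisms of Lemma~\ref{lem:high} together with the long exact sequences of Lemma~\ref{lem:long} applied to the defining $\EE$-triangles $\Omega^{i}X\to P\to\Omega^{i-1}X\dashrightarrow$: one feeds the vanishing of $\EE^{n+1}(\Omega^{i}X,Y)$ and $\EE^{n+1}(P,Y)$ through the sequence to conclude vanishing of $\EE^{n+2}(\Omega^{i-1}X,Y)$, and iterates. The second part is comparatively formal once one unwinds the definitions of $\T_n^{\vee}$, $\mathcal P^{\wedge}$, and $\thick$; no essential difficulty arises there beyond invoking closure of $\thick\mathcal P$ under cones and the hypothesis $\mathcal I\subseteq\thick\mathcal P$.
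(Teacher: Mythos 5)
Your argument matches the paper's proof in all essentials: (P1)+(P2) give $\EE^i(\T,\T)=0$ for all $i>0$, (P1)+(P3) give $\thick\T=\thick\mathcal P$, and Lemma~\ref{main2cor} then yields that $\T$ is silting in $\thick\mathcal P$. One simplification you missed: the paper defines $\pd X$ directly as $\sup\{n\mid \EE^n(X,Y)\neq 0\text{ for some }Y\}$, so $\pd\T\leq n$ \emph{is} the statement $\EE^j(\T,-)=0$ for all $j>n$, and the dimension-shifting/syzygy argument you flag as ``the main obstacle'' in the first part is unnecessary.
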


\begin{proof}
Conditions (P1) and (P2) imply that $\EE^i(\T,\T)=0$ for any $i>0$. Then the first assertion holds. Conditions (P1) and (P3) imply that $\thick\mathcal P=\thick\T$. So by Lemma~\ref{main2cor}, the second assertion holds.
\end{proof}

\begin{prop}\label{main3}
Let $\T$ be a  partial $n$-tilting subcategory in $\B$. Consider the following conditions:
\begin{itemize}
\item[(a)] $\T$ is contravariantly finite and $n$-tilting;
\item[(b)] $(\T^{\vee},\T^{\bot})$ is a cotorsion pair;
\item[(c)] $\T$ is an $n$-tilting subcategory.
\end{itemize}
We have {\rm  (a)$\Rightarrow$(b)$\Rightarrow$(c).}
\end{prop}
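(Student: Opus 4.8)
The plan is to prove the two implications in turn; the second is a short computation, while the first is where the real work lies.

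\emph{The implication (b)$\Rightarrow$(c).} Since $\T$ is assumed partial $n$-tilting, all that is missing for it to be $n$-tilting is condition (P3), that is, $\mathcal P\subseteq\T^\vee_n$. By Lemma~\ref{lem:basic}~(4) the cotorsion pair $(\T^\vee,\T^\bot)$ already gives $\mathcal P\subseteq\T^\vee$, so for $P\in\mathcal P$ I would fix some $N\geq n$ with $P\in\T^\vee_N$ and a coresolution $P\to T^0\to C^1\dashrightarrow$, $C^1\to T^1\to C^2\dashrightarrow$, $\ldots$, $C^{N-1}\to T^{N-1}\to C^N\dashrightarrow$ with all $T^j\in\T$ and $C^j\in\T^\vee_{N-j}$. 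Applying $\Hom_\B(T',-)$ for $T'\in\T$ to each of these $\EE$-triangles and using that $\EE^i(\T,\T)=0$ for all $i\geq1$ (Lemma~\ref{lem:easy}), the long exact sequences of Lemma~\ref{lem:long} give $\EE^i(T',C^{j+1})\cong\EE^{i+1}(T',C^j)$ for all $i\geq1$, hence $\EE^i(T',C^n)\cong\EE^{i+n}(T',P)$ for $i\geq1$. As $\pd T'\leq n$ by (P1), this vanishes for every $i\geq1$, so $C^n\in\T^\bot$; together with $C^n\in\T^\vee$ this forces $C^n\in\T^\bot\cap\T^\vee=\T$ by Lemma~\ref{lem1}~(5). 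Truncating the coresolution at $C^n\in\T=\T^\vee_0$ then shows $P\in\T^\vee_n$, which is exactly (P3).

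\emph{The implication (a)$\Rightarrow$(b).} The vanishing $\EE^i(\T^\vee,\T^\bot)=0$ for $i>0$ is immediate from $\T^\bot=(\T^\vee)^\bot$ (Lemma~\ref{lem1}~(4)), so the task is to produce, for every $B\in\B$, the two approximation $\EE$-triangles $V_B\to U_B\to B\dashrightarrow$ and $B\to V^B\to U^B\dashrightarrow$ with $U_B,U^B\in\T^\vee$ and $V_B,V^B\in\T^\bot$. I would build the first by an Auslander--Buchweitz-type argument: starting from a projective resolution of $B$ and using $\mathcal P\subseteq\T^\vee_n$ from (P3), one replaces the projectives by their finite $\T$-coresolutions and splices the resulting data together by repeated use of commutative diagrams of $\EE$-triangles (Proposition~\ref{prop:NPLN} and the axioms of \cite[Definition~2.12]{NP}); the bound $\pd\T\leq n$ from (P1), fed through the higher-extension calculus of Lemmas~\ref{lem:high} and~\ref{lem:leq+1}, is what guarantees that the procedure stops after finitely many steps with a remainder term in $\T^\bot$, while $U_B$, being assembled from objects of $\T^\vee$, lies in $\T^\vee$ by Lemma~\ref{lem1}~(2). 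Contravariant finiteness of $\T$ is used throughout to choose the $\T$-approximations compatibly. Once the first $\EE$-triangle is available for every object, the second follows by a Salce-type swap: embed $V_B$ into an injective $V_B\to I\to\Sigma V_B\dashrightarrow$ (recall $\mathcal I\subseteq\T^\bot$ and, by (P3), $\mathcal P\subseteq\T^\vee$), apply the precover construction to $\Sigma V_B$, and pull back, reading off from $\EE(\T^\vee,\T^\bot)=0$ that the outer terms land in $\T^\bot$ and $\T^\vee$ respectively. Combined with the vanishing above, this makes $(\T^\vee,\T^\bot)$ a cotorsion pair.

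\emph{Where the difficulty is.} The main obstacle will be the construction of the special $\T^\vee$-precovers in (a)$\Rightarrow$(b). Two points must be handled carefully: each $\T$-approximation has to be realised as part of an inflation or deflation before it can be placed in an $\EE$-triangle --- this is precisely where (WIC) and the existence of enough projectives and injectives enter --- and, more seriously, one must argue that the splicing genuinely \emph{terminates}, i.e.\ that by virtue of $\pd\T\leq n$ the iterated (co)syzygies are eventually trapped in $\T^\bot$, so that a possibly infinite projective resolution collapses into a single $\EE$-triangle with $\T^\vee$-cover and $\T^\bot$-remainder. By contrast, (b)$\Rightarrow$(c) is a pure dimension-shifting computation and $\EE^i(\T^\vee,\T^\bot)=0$ is formal.
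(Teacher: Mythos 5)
The implication (b)$\Rightarrow$(c) in your proposal is correct and matches the paper's argument in substance: you dimension-shift along a finite $\T$-coresolution of $P\in\mathcal P\subseteq\T^\vee$ and use $\pd\T\leq n$ to trap the $n$-th cocone in $\T^\bot\cap\T^\vee=\T$. (The paper phrases the shift with $\EE^i(\T^\vee,-)$ via $\T^\bot=(\T^\vee)^\bot$, while you use $\EE^i(\T,-)$; either works.)

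The implication (a)$\Rightarrow$(b) is where your proposal stops being a proof. You lay out an Auslander--Buchweitz splicing plan and then explicitly list the two points that would have to be handled --- that the $\T$-approximations can be realised as inflations/deflations, and that the iterated construction terminates in $\T^\bot$ --- without actually resolving them. The termination step is particularly delicate: dimension-shifting along a projective resolution of $B$ does \emph{not} automatically go through in an extriangulated category, because $\EE^j(X,P)$ need not vanish for $P\in\mathcal P$ and arbitrary $X$ (only $\EE^j(P,X)=0$ is guaranteed), so the syzygy-based shift $\EE^j(\T,\Omega^iB)\cong\EE^{j+i}(\T,B)$ you would want is not available. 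The paper sidesteps this entirely: it invokes a criterion (cited as \cite[Proposition~4.3]{MDZH}) reducing the claim that $(\T^\vee,\T^\bot)$ is a cotorsion pair to two simpler statements, namely $\B=(\T^\bot)^\vee$ and that every $V\in\T^\bot$ admits an $\EE$-triangle $V'\to T\to V\dashrightarrow$ with $T\in\T$ and $V'\in\T^\bot$. The first is proved by shifting along $n$ \emph{cosyzygies} $B\to I_1\to B_1,\dots$ (legitimate because $\EE^j(\T,I)=0$ for $I\in\mathcal I$), using $\pd\T\leq n$ to force $B_n\in\T^\bot$. The second is proved by: starting from a deflation $P\twoheadrightarrow V$, using the (P3) coresolution $P\to T_0\to\cdots$ and a dimension shift to show $P\to T_0$ is a left $\T^\bot$-approximation; lifting to a morphism $T_0\to V$, which is a deflation by (WIC); passing to a right $\T$-approximation $T\to V$ (here contravariant finiteness is used), again a deflation by (WIC); and finally reading off $V'\in\T^\bot$ from the long exact sequence. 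None of the Auslander--Buchweitz splicing or Salce pullback machinery you sketch is needed. So if you want to complete your approach, you would either have to carry out the AB construction in the extriangulated setting from scratch and justify termination carefully, or --- much more efficiently --- locate a reduction lemma of the MDZH type and argue as the paper does.
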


\begin{proof}

(a)$\Rightarrow$(b): Let $\T$ be a contravariantly finite $n$-tilting subcategory. By \cite[Proposition 4.3]{MDZH}, we only need to show that:
\begin{itemize}
\item[(1)] $\B=(\T^{\bot})^{\vee}$.
\item[(2)] Any object $V\in \T^{\bot}$ admits an $\EE$-triangle $V'\to T\to V\dashrightarrow$ with $T\in \T,V'\in \T^{\bot}$.
\end{itemize}

(1) For any $B\in \B$, since $\B$ has enough injectives, we have the following $\EE$-triangles
$$B_{i-1}\to I_i\to B_i\dashrightarrow,\ i=1,...,n,$$
with $I_i\in \mathcal I$ and $B_0=B$. By Lemma~\ref{lem:long}, there are exact sequences
$$0=\EE^j(\T,I_i)\to\EE^j(\T,B_i)\to\EE^{j+1}(\T,B_{i-1})\to\EE^{j+1}(\T,I_i)=0,\ 1\leq i\leq n,j\geq 1.$$
Hence we have isomorphisms
$$\EE^j(\T,B_i)\cong\EE^{j+1}(\T,B_{i-1}),\ 1\leq i\leq n,j\geq 1.$$
Since $\pd \T\leq n$, we have $\EE^{j+1}(\T,B_0)=0$ for any $j\geq n$. So by induction, we have $\EE^{j}(\T,B_n)=0$ for any $j\geq 1$. This implies that $B_n\in\T^\perp$. Since $\mathcal I\subseteq \T^{\bot}$, we have $B\in (\T^{\bot})^{\vee}$. Hence $\B=(\T^{\bot})^{\vee}$.

(2) Any object $V\in \T^{\bot}$ admits a deflation $P\xrightarrow{p} V$ with $P\in \mathcal P$. Since $\T$ is an $n$-tilting subcategory, we have $\EE$-triangles
$$R_i\xrightarrow{r_i} T_i\to R_{i+1}\dashrightarrow,\ ,i=0,1,..., n-1$$
where $R_0=P,T_i\in \T, R_n\in \T$. By Lemma~\ref{lem:long}, there are exact sequences
$$0=\EE^{j}(T_i,\T^{\bot})\to \EE^{j}(R_i,\T^{\bot})\to \EE^{j+1}(R_{i+1},\T^{\bot})\to \EE^{j+1}(T_i,\T^{\bot})=0,\  0\leq i< n,j\geq 1.$$
Hence we have isomorphisms
$$\EE^{j}(R_i,\T^{\bot})\cong \EE^{j+1}(R_{i+1},\T^{\bot}),\  0\leq i<n,j\geq 1.$$
Since $\EE^n(R_n,\T^\perp)=0$, by induction, we have $\EE(R_1,\T^\perp)=0$. It follows that $r_0$ is a left $(\T^{\bot})$-approximation of $R_0=P$. Hence there exists a morphism $t_0:T_0\to V$ such that $p=t_0r_0$. Using condition (WIC), we get that $t_0$ is a deflation.
Since $\T$ is contravariantly finite, $V$ admits a right $\T$-approximation $t:T\to V$, then $t_0$ factors through $t$ with $T\in\T$. Again by condition (WIC), we get that $t$ is a deflation. So there is an $\EE$-triangle
$$V'\to T\xrightarrow{t} V\dashrightarrow.$$
Since $t$ is a right $\T$-approximation and $\T$ is  partial tilting, we have that $V'\in \T^{\bot_1}$. By Lemma~\ref{lem:long}, there are exact sequences
$$0=\EE^{i}(\T,T)\to \EE^{i}(\T,V)\to \EE^{i+1}(\T,V')\to \EE^{i+1}(\T,T)=0,\  i\geq 1,$$
which implies that $\EE^i(\T,V')=0$, $i\geq 2$. Hence $V'\in \T^{\bot}$.

(b)$\Rightarrow$(c): If $(\T^{\vee},\T^{\bot})$ is a cotorsion pair, then by Lemma~\ref{lem:basic}~(4), we have $\mathcal P\subseteq \T^{\vee}$. So any object $P\in \mathcal P$ admits $\EE$-triangles
$$R_i\to T_i\to R_{i+1}\dashrightarrow,\ i=0,1,..., n-1,$$
with $R_0=P,T_i\in \T$. By Lemma~\ref{lem:long}, there are exact sequences
$$0=\EE^i(\T^{\vee},T_{n-i})\to \EE^i(\T^{\vee},R_{n-i+1})\to \EE^{i+1}(\T^{\vee},R_{n-i})\to \EE^{i+1}(\T^{\vee},T_{n-i})=0,\  i=1,2,...,n$$
which implies $\EE^i(\T^{\vee},R_{n-i+1})\cong \EE^{i+1}(\T^{\vee},R_{n-i}), i=1,2,..., n.$ Since $\EE^{n+1}(\T^{\vee},R_0)=0$ by (P1), we get that $\EE(\T^{\vee},R_{n})=0$. So we have $R_{n}\in \T^{\perp}$ by Lemma~\ref{lem:basic}~(1). Since by definition, $R_{n}$ also lies in $\T^{\vee}$, we can get $R_{n}\in \T^\vee\cap\T^\perp=\T$. It follows that $P\in\T_n^{\vee}$. Hence (P3) holds.
\end{proof}

Let $\sil\B$ (resp. $\til\B$) be the class of all the silting (resp. tilting) subcategories in $\B$.

\begin{thm}\label{main4.4}
The following statements hold.
\begin{itemize}
\item[(1)] If for any object $B\in \B$ we have $\pd B<\infty$, then $\sil\B\supseteq \til\B$.
\item[(2)] If $\pd \B<\infty$, then $\sil\B= \til\B$.
\item[(3)] If there exists an object $X$ of $\B$ such that $\pd X=\infty$, then $\sil\B\cap \til\B=\emptyset$.
\end{itemize}
\end{thm}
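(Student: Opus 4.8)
The plan is to establish all three parts at once, around the single observation that the full subcategory $\mathcal{F}:=\{B\in\B\mid\pd B<\infty\}$ is thick. First I would check this: $\mathcal{F}$ is obviously closed under isomorphisms, finite direct sums and summands, and the three $\pd$-inequalities of Lemma~\ref{lem:leq+1} show that in any $\EE$-triangle $A\to B\to C\dashrightarrow$, finiteness of the projective dimension of two of $A,B,C$ forces it for the third. Along the way I would also record the elementary fact that $\pd B=m<\infty$ forces $\Omega^m B\in\mathcal P$ (since $\EE(\Omega^m B,-)=\EE^{m+1}(B,-)=0$), so the iterated syzygy $\EE$-triangles $\Omega^{i+1}B\to P_i\to\Omega^i B\dashrightarrow$ exhibit $B\in\mathcal P^{\wedge}_m$; in particular $\mathcal{F}\subseteq\mathcal P^{\wedge}\subseteq\thick\mathcal P$.

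With this in hand, (1) and (3) are short. For (1), the hypothesis is $\B=\mathcal{F}$, so $\B=\mathcal{F}\subseteq\thick\mathcal P\subseteq\B$ forces $\thick\mathcal P=\B$, hence $\mathcal I\subseteq\thick\mathcal P$; Lemma~\ref{lem:easy} then gives that every tilting subcategory is silting in $\thick\mathcal P=\B$, i.e. $\til\B\subseteq\sil\B$. For (3), suppose some $\s$ were both silting and tilting; being tilting it is partial $n$-tilting for some $n$, so $\pd\s\le n<\infty$, i.e. $\s\subseteq\mathcal{F}$, and since $\mathcal{F}$ is thick this would give $\B=\thick\s\subseteq\mathcal{F}$, contradicting the existence of an object of infinite projective dimension.

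The real content is (2). Since $\pd\B<\infty$ makes every object of finite projective dimension, part (1) already gives $\til\B\subseteq\sil\B$, so I only need to show a silting subcategory $\s$ is $n$-tilting for $n:=\max\{\pd\B,1\}$. Conditions (P1) and (P2) are immediate, as $\pd\s\le\pd\B\le n$ and $\s$ is presilting. For (P3) I would follow the pattern of the proof of Proposition~\ref{main3}\,(b)$\Rightarrow$(c): by Theorem~\ref{main2} the pair $(\s^{\vee},\s^{\wedge})$ is a cotorsion pair, so $\mathcal P\subseteq\s^{\vee}$ by Lemma~\ref{lem:basic}\,(4). Given $P\in\mathcal P$, after a short induction showing $\s^{\vee}_k\subseteq\s^{\vee}_{k+1}$ for all $k\ge0$, I may choose $m\ge n$ with $P\in\s^{\vee}_m$ and fix a corresponding coresolution $R_i\to S_i\to R_{i+1}\dashrightarrow$ ($0\le i<m$, $R_0=P$, $S_i\in\s$, $R_m\in\s$). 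Feeding these $\EE$-triangles into the long exact sequences of Lemma~\ref{lem:long} and using $\EE^i(\s,\s)=0$ for all $i\ge1$ yields $\EE^j(\s,R_n)\cong\EE^{j+1}(\s,R_{n-1})\cong\cdots\cong\EE^{j+n}(\s,P)$ for every $j\ge1$, and the last group vanishes because $\pd\s\le n$. Hence $R_n\in\s^{\perp}$; as also $R_n\in\s^{\vee}$, Lemma~\ref{lem1}\,(5) gives $R_n\in\s^{\perp}\cap\s^{\vee}=\s$, so the truncated coresolution $R_0\to S_0\to R_1\dashrightarrow,\ \dots,\ R_{n-1}\to S_{n-1}\to R_n\dashrightarrow$ witnesses $P\in\s^{\vee}_n$, which is (P3). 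Then $\s$ is $n$-tilting, giving $\sil\B\subseteq\til\B$, hence $\sil\B=\til\B$.

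I expect the crux to be this last step in (2): ensuring that the brutal truncation $R_n$ of a coresolution of a projective $P$ comes back into $\s$. That is exactly where the length $n=\pd\B$ has to be played off against the presilting vanishing $\EE^i(\s,\s)=0$; everything else reduces to the thickness of $\mathcal{F}$, to Lemma~\ref{lem:easy}, and to the cotorsion-pair characterization of silting (Theorem~\ref{main2}).
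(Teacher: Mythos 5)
Your proposal is correct and follows essentially the same route as the paper: part~(1) via Lemma~\ref{lem:easy} after observing $\thick\mathcal{P}=\B$, part~(2) by reducing (P3) for a silting $\s$ to the coresolution-truncation argument already present in the proof of Proposition~\ref{main3}\,(b)$\Rightarrow$(c), and part~(3) from $\thick\mathcal{P}\subsetneq\B$. The only (welcome) difference in presentation is that you isolate the thickness of $\mathcal{F}=\{B:\pd B<\infty\}$ as an explicit unifying lemma (the paper uses it tacitly), and in (2) you apply $\Hom(\s,-)$ rather than $\Hom(\s^{\vee},-)$ and verify all higher vanishing directly, whereas the paper needs only $\EE(\s^{\vee},R_n)=0$ together with the cotorsion pair; both are sound.
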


\begin{proof}
(1) If any object $B\in \B$ has finite projective dimension, then $\thick \mathcal P=\B$. By Lemma~\ref{lem:easy}, we get the assertion.

(2) If $\pd \B=n<\infty$, by (1), we only need to show $\sil\B\subseteq \til\B$. Let $\s\in\sil\B$. Then $\s$ is a partial tilting subcategory. By Theorem~\ref{main2}, $(\s^{\vee},\s^{\wedge})$ is a cotorsion pair. By the proof of ``(b)$\Rightarrow$(c)'' in Proposition \ref{main3}, we can get that $\mathcal P\subseteq \s^{\vee}_n$. Hence by definition, $\s$ is a tilting subcategory.

(3) If there is an object having infinity projective dimension, then $\thick\mathcal P\subsetneq\B$. This means that for any subcategory $\s$ of $\B$,  $\thick\s=\thick\mathcal P$ and $\thick\s=\B$ can not hold at the same time. It follows that there is no subcategory which is both silting and tiling.
\end{proof}

We are interested in the case when $\B$ contains a tilting-cotilting subcategory, which gives an important example of the results in Section 4. We generalize \cite[Definition 7.2.1]{BR} to extriangulated categories.

\begin{defn}\label{Gorenstein}
$\B$ is called \emph{Gorenstein} if $\pd\mathcal I<\infty$ and $\id\mathcal P<\infty$.
\end{defn}

By definition, we have the following observation.

\begin{rem}\label{lem:Gor}
If $\B$ is Gorenstein, then $\mathcal P$ and $\mathcal I$ are tilting-cotilting subcategories of $\B$.
\end{rem}

Consider the following condition.
\begin{description}
\item[(AF)] Either $\B$ is an abelian category, or $\mathcal P$ is covariantly finite and $\mathcal I$ is contravariantly finite.
\end{description}

We have the following lemma.

\begin{lem}\label{lem:eq}
Let $\T$ be a  partial tilting subcategory in $\B$. Under the condition $\operatorname{(AF)}$, if $\B$ is Gorenstein, then the following are equivalent:
\begin{itemize}
\item[(1)] $\T$ is a tilting subcategory;
\item[(2)] $(\T^{\vee},\T^{\bot})$ is a cotorsion pair;
\item[(3)] $\T$ is a cotilting subcategory;
\item[(4)] $({^{\bot}}\T,\T^{\wedge})$ is a cotorsion pair.
\end{itemize}
\end{lem}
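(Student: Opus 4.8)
The plan is to reduce everything to Proposition~\ref{main3} and its dual, the two being glued together by a ``Gorenstein symmetry'' between projective and injective dimension, and then to spend the real effort on functorial finiteness of $\T$.

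\smallskip\noindent\emph{Gorenstein symmetry.} Since $\pd\mathcal I<\infty$ and $\id\mathcal P<\infty$, iterating Lemma~\ref{lem:leq+1} along a finite projective resolution (resp.\ injective coresolution) of an object $X$ shows $\pd X<\infty\iff\id X<\infty$, with $\id X\le\pd X+\id\mathcal P$ and $\pd X\le\id X+\pd\mathcal I$. Hence $\thick\mathcal P=\thick\mathcal I=\B_f$, where $\B_f:=\{X\in\B\mid\pd X<\infty\}$. In particular a partial $n$-tilting $\T$ has $\id\T\le n+\id\mathcal P<\infty$, and since $\EE^i(\T,\T)=0$ for all $i\geq1$ by Lemma~\ref{lem:easy}, $\T$ is at the same time a partial tilting and a partial cotilting subcategory.

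\smallskip\noindent\emph{Thick-closure characterization and the r\^ole of $(2),(4)$.} Next I would show that for a partial tilting $\T$ the following are equivalent: $\T$ is tilting; $\mathcal P\subseteq\thick\T$; $\thick\T=\B_f$; $\mathcal I\subseteq\thick\T$; $\T$ is cotilting. Indeed $\T^{\vee}\subseteq\thick\T$ gives ``tilting $\Rightarrow\mathcal P\subseteq\thick\T$''; conversely, as $\mathcal P\subseteq{}^{\bot}\T$ always, $\mathcal P\subseteq\thick\T$ forces $\mathcal P\subseteq{}^{\bot}\T\cap\thick\T=\T^{\vee}$ by \cite[Lemma~3.13]{AT} (dual form), and truncating at step $n=\pd\T$ a $\T$-resolution of $P\in\mathcal P$ (with $\EE$-triangles $R_i\to T_i\to R_{i+1}\dashrightarrow$, $T_i\in\T$) gives $R_n\in{}^{\bot}\T\cap\T^{\bot}\cap\thick\T=\T^{\vee}\cap\T^{\wedge}=\T$ by Lemma~\ref{lem1} (the vanishing $\EE^{>0}(\T,\T)=0$ along the tail of the resolution forces $R_n\in\T^{\bot}$), so $\mathcal P\subseteq\T_n^{\vee}$, i.e.\ (P3). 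As $\T\subseteq\B_f$ we have $\thick\T\subseteq\B_f=\thick\mathcal P=\thick\mathcal I$, so ``$\mathcal P\subseteq\thick\T$'' $\iff$ ``$\thick\T=\B_f$'' $\iff$ ``$\mathcal I\subseteq\thick\T$'', and ``$\mathcal I\subseteq\thick\T\Rightarrow\T$ cotilting'' is dual to the above; in particular $(1)\Leftrightarrow(3)$. To bring in $(2)$ and $(4)$, observe that the two approximation conditions of \cite[Proposition~4.3]{MDZH} feeding Proposition~\ref{main3}, namely $\B=(\T^{\bot})^{\vee}$ and $\B=({}^{\bot}\T)^{\wedge}$, already hold for any partial tilting $\T$ with $\pd\T,\id\T<\infty$ (exactly as in the proof of Proposition~\ref{main3}(1) and its dual, using $\mathcal I\subseteq\T^{\bot}$, $\mathcal P\subseteq{}^{\bot}\T$ and Gorensteinness). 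Hence, by Proposition~\ref{main3}, a contravariantly finite tilting $\T$ satisfies $(2)$, while $(2)\Rightarrow(1)$ holds unconditionally; dually, a covariantly finite cotilting $\T$ satisfies $(4)$, while $(4)\Rightarrow(3)$ holds unconditionally.

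\smallskip\noindent\emph{The main obstacle.} What is left, and what I expect to be the hardest point, is to prove that under $\operatorname{(AF)}$ and Gorensteinness a tilting (equivalently cotilting, equivalently $\thick\T=\B_f$) subcategory $\T$ is functorially finite in $\B$. In the non-abelian alternative of $\operatorname{(AF)}$, $\mathcal P$ and $\mathcal I$ are functorially finite (contravariant, resp.\ covariant, finiteness is automatic from enough projectives, resp.\ injectives, and the missing halves are exactly $\operatorname{(AF)}$); one then constructs a right $\T$-approximation of an arbitrary $B$ from a projective deflation $P\ta B$, the finite $\T$-resolution of $P$ supplied by $\mathcal P\subseteq\T_n^{\vee}$, and an induction along the $\EE$-triangles realizing $\B=(\T^{\bot})^{\vee}$, and dually for left $\T$-approximations. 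In the abelian alternative, the same is carried out directly with kernels and cokernels, splicing a finite projective resolution of $B$ with the $\T$-resolutions of its projective syzygies to land in $\T^{\wedge}$, and dually. Granting this, Proposition~\ref{main3} and its dual close all the remaining implications and yield $(1)\Leftrightarrow(2)\Leftrightarrow(3)\Leftrightarrow(4)$.
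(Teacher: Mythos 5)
Your reduction of $(1)\Leftrightarrow(3)$ via the thick-closure characterization is clean and correct, and the observations $(2)\Rightarrow(1)$, $(4)\Rightarrow(3)$ via Proposition~\ref{main3}(b)$\Rightarrow$(c) and its dual are also fine. But the plan then funnels everything through the claim that a tilting subcategory $\T$ is itself functorially finite in $\B$, so as to invoke (a)$\Rightarrow$(b) of Proposition~\ref{main3}; this is exactly where the argument comes apart. A tilting \emph{subcategory} (as opposed to a single tilting object, where $\add T$ is trivially functorially finite) need not be contravariantly or covariantly finite under the stated hypotheses, and the sketch you give — splicing a projective deflation $P\ta B$ with a $\T$-coresolution of $P$ and ``inducting along'' $\B=(\T^{\bot})^{\vee}$ — does not produce a right $\T$-approximation of an arbitrary $B$: even assuming the cotorsion pair $(\T^{\vee},\T^{\bot})$, one only gets special $\T^{\vee}$-precovers and $\T$-precovers of objects in $\T^{\bot}$, not $\T$-precovers of all of $\B$. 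Since that intermediate claim is both unproved and likely false in general, the proposed route to $(1)\Rightarrow(2)$ and $(3)\Rightarrow(4)$ does not close.

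The paper's proof sidesteps functorial finiteness of $\T$ altogether and never uses implication (a)$\Rightarrow$(b) of Proposition~\ref{main3}. It runs the cycle $(1)\Rightarrow(4)\Rightarrow(3)\Rightarrow(2)\Rightarrow(1)$, where the nontrivial step $(1)\Rightarrow(4)$ goes as follows: since $\thick\T=\thick\mathcal P=\thick\mathcal I$, the subcategories $\mathcal P$, $\mathcal I$, $\T$ are all silting in $\thick\mathcal P$ (Lemma~\ref{main2cor}), so $(\T^{\vee},\T^{\wedge})$ is a cotorsion pair \emph{inside $\thick\mathcal P$} by Theorem~\ref{main2}; separately, $({^{\bot}}\mathcal P,\thick\mathcal P)$ is a cotorsion pair in $\B$ — this is where $\operatorname{(AF)}$ enters, via \cite[Theorem~7.2.2]{BR} in the abelian case or via the dual of Proposition~\ref{main3} applied to the cotilting subcategory $\mathcal P$ when $\mathcal P$ is covariantly finite; and the two cotorsion pairs are then pasted by the dual of \cite[Proposition~3.4]{CZZ} to give $({^{\bot}}\T,\T^{\wedge})$ in $\B$. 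The functorial finiteness you need is thus supplied at the level of $\T^{\wedge}$ and $\T^{\vee}$, not of $\T$, and is obtained from cotorsion-pair structure rather than constructed by hand. Your outline is missing this pasting mechanism and instead places the whole burden on a step that the paper deliberately avoids.
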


\begin{proof}
(1)$\Rightarrow$(4): By Lemma~\ref{lem:easy}, we have $\thick \T=\thick \mathcal P$. Since $\B$ is Gorenstein, we have $\thick\mathcal P=\thick\mathcal I$ which has enough projectives and enough injectives. Then by Lemma~\ref{main2cor}, $\mathcal P$, $\mathcal I$ and $\T$ are silting subcategories in $\thick\mathcal P$. So by Theorem~\ref{main2}, $(\T^{\vee},\T^{\wedge})$ is a cotorsion pair in $\thick \mathcal P$. We claim that $({^{\bot}\mathcal P},\thick \mathcal P)$ is a cotorsion pair in $\B$. Indeed, if $\B$ is an abelian category, this is due to \cite[Theorem 7.2.2]{BR}. If $\mathcal P$ is covariantly finite, since by Remark~\ref{lem:Gor}, $\mathcal P$ is cotilting in $\B$, by the dual of Proposition \ref{main3}, we have that $({^{\bot}\mathcal P}, \mathcal P^{\wedge}=\thick\mathcal P)$ is a cotorsion pair. Thus, we finish the proof of the claim. Then $\T^{\wedge}$ is covariantly finite in $\B$. Hence by the dual of \cite[Proposition 3.4]{CZZ}, $({^{\bot}}\T,\T^{\wedge})$ is a cotorsion pair in $\B$.
	
(4)$\Rightarrow$(3): By Lemma~\ref{lem:leq+1}, we have $\id\T\leq\id\mathcal P<\infty$. Then (3) follows from the dual of Proposition~\ref{main3}.
	
(3)$\Rightarrow$(2): this is the dual of the proof of ``(1)$\Rightarrow$(4)".
	
(2)$\Rightarrow$(1): it follows from Proposition \ref{main3}.
\end{proof}

\begin{rem}
	If one of $\mathcal P$ and $\mathcal I$ contains finitely many indecomposable non-isomorphic objects, then by \cite[Proposition~5.8]{AT}, both of them contains finitely many indecomposable non-isomorphic objects. So in this case, assumption $\operatorname{(AF)}$ holds.
	\end{rem}

The following criterion on $\B$ having tilting-cotilting subcategories is a generalization of
a result of Happel-Unger \cite[Lemma 1.3]{HU} and a result of \cite[Theorem 2.6]{XZZ}.

\begin{thm}\label{main4}
Under assumption~$\operatorname{(AF)}$, the following are equivalent:
\begin{itemize}
\item[(1)] $\B$ is Gorenstein;
\item[(2)] any tilting subcategory is cotilting;
\item[(3)] any cotilting subcategory is tilting;
\item[(4)] $\B$ has a tilting-cotilting subcategory.
\end{itemize}
\end{thm}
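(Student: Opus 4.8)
The plan is to prove the cycle $(1)\Rightarrow(2)\Rightarrow(4)\Rightarrow(1)$ together with $(1)\Rightarrow(3)\Rightarrow(4)$; since $(4)\Rightarrow(1)$ this also closes the loop through $(3)$, so all four statements become equivalent. The implications $(1)\Rightarrow(2)$ and $(1)\Rightarrow(3)$ are immediate from Lemma~\ref{lem:eq} and its dual: a tilting (resp.\ cotilting) subcategory is in particular a partial tilting (resp.\ partial cotilting) subcategory, and if $\B$ is Gorenstein then Lemma~\ref{lem:eq} says that such a subcategory is tilting if and only if it is cotilting. This is where assumption $\operatorname{(AF)}$ and the self-duality of the Gorenstein condition enter.

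For $(2)\Rightarrow(4)$ and $(3)\Rightarrow(4)$ the key observation is that $\mathcal P$ is \emph{always} a $1$-tilting subcategory of $\B$, and dually $\mathcal I$ is always a $1$-cotilting subcategory. Indeed $\pd\mathcal P=0\leq 1$ and $\EE^i(\mathcal P,\mathcal P)=0$ for all $i\geq 1$, so (P1) and (P2) hold; and (P3) holds because for $P\in\mathcal P$ the split $\EE$-triangle $P\xrightarrow{1_P}P\to 0\dashrightarrow$ exhibits $P\in\mathcal P^{\vee}_1$. Granting this, $(2)$ forces $\mathcal P$ to be cotilting, hence tilting-cotilting, and $(3)$ forces $\mathcal I$ to be tilting, hence tilting-cotilting; in either case $(4)$ holds.

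It remains to prove $(4)\Rightarrow(1)$. Suppose $\T\subseteq\B$ is simultaneously $n$-tilting and $m$-cotilting. By (P3) for the tilting structure, each $P\in\mathcal P$ sits in a chain of $\EE$-triangles $P_j\to T_j\to P_{j+1}\dashrightarrow$, $0\leq j\leq n-1$, with $P_0=P$, all $T_j\in\T$ and $P_n\in\T$. Since $\id\T\leq m$, a downward induction on $j$ using the inequality $\id P_j\leq\max\{\id T_j,\id P_{j+1}+1\}$ of Lemma~\ref{lem:leq+1} gives $\id P\leq m+n<\infty$, so $\id\mathcal P<\infty$. Dually, the coresolution condition $\mathcal I\subseteq\T^{\wedge}_m$ coming from the cotilting structure, together with $\pd\T\leq n$ and the inequality $\pd C\leq\max\{\pd A+1,\pd B\}$ of Lemma~\ref{lem:leq+1}, yields $\pd\mathcal I\leq n+m<\infty$. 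Hence $\B$ is Gorenstein. The only real obstacle here is the bookkeeping in $(4)\Rightarrow(1)$, i.e.\ keeping the inductive bounds on injective and projective dimension along the finite resolutions through $\T$ under control; once one notices that $\mathcal P$ and $\mathcal I$ are themselves (co)tilting, the implications $(2)\Rightarrow(4)$ and $(3)\Rightarrow(4)$ are essentially free, and everything else is a direct appeal to Lemma~\ref{lem:eq}.
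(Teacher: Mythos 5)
Your proof is correct and follows essentially the same route as the paper: the paper proves $(1)\Rightarrow(2)\Rightarrow(4)\Rightarrow(1)$ using Lemma~\ref{lem:eq}, the observation that $\mathcal P$ is always a tilting subcategory, and the downward induction on a $\T$-resolution of a projective via Lemma~\ref{lem:leq+1}, and then invokes duality for the chain through $(3)$, exactly as you do. The only difference is that you spell out explicitly why $\mathcal P$ is always $1$-tilting (via the split $\EE$-triangle $P\to P\to 0\dashrightarrow$), whereas the paper takes this for granted.
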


\begin{proof}
``(2)$\Rightarrow$(4)": Since $\mathcal P$ is a tilting subcategory of $\B$, by (2) we have that $\mathcal P$ is cotilting. So we have (4).

``(4)$\Rightarrow$(1)": By (4), there is an $n$-tilting, $m$-cotilting subcategory $\T$ in $\B$. By definition every projective object $P\in \mathcal P$ admits $\EE$-triangles
$$R_i\to T_i\to R_{i+1}\dashrightarrow,\ i=0,1,\cdots,n-1$$
with $R_0=P,T_i\in \T,R_{n}\in \T$. Since $\id \T\leq m$, by Lemma~\ref{lem:leq+1}, we have $\id P\leq m+n$. Hence $\id\mathcal P\leq m+n$. Dually, we have $\pd\mathcal I\leq m+n$, too. Thus, we get (1).

``(1)$\Rightarrow$(2)": this follows  directly from Lemma~\ref{lem:eq}.

Dually, one can show the implications (1)$\Rightarrow$(3)$\Rightarrow$(4)$\Rightarrow$(1).
\end{proof}

By Theorem \ref{main5}, Theorem \ref{main6}, Proposition \ref{main3} and its dual, we have the following corollary:

\begin{cor}
Let $\T$ be a tilting subcategory of $\B$. If  $\B$ is Gorenstein and satisfies condition {\rm (AF)}, then we have the following triangle equivalences:
$${^{\bot}}\mathcal P/[\mathcal P]\simeq \B/(\thick\mathcal P)=\B/(\thick \T)=\B/(\thick\mathcal I)\simeq \mathcal I^{\bot}/[\mathcal I]\simeq (\T^{\bot}\cap {^{\bot}}\T)/[\T].$$
\end{cor}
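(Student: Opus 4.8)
The plan is to apply the silting reduction of Theorems~\ref{main5} and~\ref{main6} three times — to $\mathcal P$, to $\mathcal I$, and to $\T$ — and then to glue the three triangle equivalences along the localization they all share.

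First I would verify that $\mathcal P$, $\mathcal I$ and $\T$ are presilting subcategories satisfying condition (CP). Each is presilting: for $\T$ this is Lemma~\ref{lem:easy}, and for $\mathcal P$ (resp. $\mathcal I$) it is immediate since syzygies of projectives are projective (resp. cosyzygies of injectives are injective), so $\EE^i(\mathcal P,\mathcal P)=0$ (resp. $\EE^i(\mathcal I,\mathcal I)=0$) for all $i>0$. Each is moreover a partial tilting subcategory — note that $\pd\mathcal I<\infty$ is exactly the Gorenstein hypothesis — and each is in fact \emph{tilting}: for $\mathcal P$ and $\mathcal I$ this is Remark~\ref{lem:Gor}, and for $\T$ it is the assumption. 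Hence, using that $\B$ is Gorenstein and satisfies (AF), Lemma~\ref{lem:eq} applied to each of $\mathcal P$, $\mathcal I$, $\T$ gives that $(\W^{\vee},\W^{\bot})$ and $({}^{\bot}\W,\W^{\wedge})$ are cotorsion pairs for $\W\in\{\mathcal P,\mathcal I,\T\}$; this is precisely (CP).

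Next I would identify the subfactor categories and run the reduction. Writing $\Z_\W:={}^{\bot}\W\cap\W^{\bot}$, we have $\mathcal P^{\bot}=\B$ because $\EE^i(\mathcal P,-)=0$ for all $i>0$, so $\Z_{\mathcal P}={}^{\bot}\mathcal P$; dually $\Z_{\mathcal I}=\mathcal I^{\bot}$; and $\Z_\T=\T^{\bot}\cap{}^{\bot}\T$ by definition. Since (CP) holds and $\B$ is skeletally small (the standing hypothesis of Sections~3--4), Theorems~\ref{main5} and~\ref{main6} yield triangle equivalences
$$\B/(\thick\mathcal P)\simeq{}^{\bot}\mathcal P/[\mathcal P],\qquad \B/(\thick\mathcal I)\simeq\mathcal I^{\bot}/[\mathcal I],\qquad \B/(\thick\T)\simeq(\T^{\bot}\cap{}^{\bot}\T)/[\T].$$
Finally, the three localizations are the same: by Remark~\ref{rmk1}, (CP) forces $\thick\mathcal I\subseteq\thick\W$ and $\thick\mathcal P\subseteq\thick\W$ for each of these $\W$, whence $\thick\mathcal P=\thick\mathcal I$ (this also follows directly from $\pd\mathcal I<\infty$ and $\id\mathcal P<\infty$), while $\thick\T=\thick\mathcal P$ by the proof of Lemma~\ref{lem:easy}. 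Since the class $\R$ of weak equivalences defining $\B/(\thick\W)$ depends only on $\thick\W$, it is the same for all three, so the underlying categories coincide; stringing the displayed equivalences through this identification produces the asserted chain.

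The only point requiring genuine care is that the three localizations agree \emph{as triangulated categories}, i.e. that the shift functors $\underline{[1]}$ manufactured from $\mathcal P$, $\mathcal I$ and $\T$ are naturally isomorphic. I would argue this intrinsically: by Definition~\ref{tri} the distinguished triangles of each structure are the isomorphs of images of $\EE$-triangles of $\B$, and for any $X$ the $\EE$-triangle $X\to I\to\Sigma X\dashrightarrow$ with $I\in\mathcal I\subseteq\thick\W$ (Remark~\ref{rmk1}) becomes in $\B/(\thick\W)$ a distinguished triangle $X\to 0\to\Sigma X\to X\underline{[1]}$, exhibiting $X\underline{[1]}$ as a cone of $X\to 0$; since the first two terms of this triangle are insensitive to the choice of $\W$, the three structures have the same distinguished triangles, with naturality of the resulting comparison supplied by the distinguished triangle attached to $1_X$. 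I expect this bookkeeping — rather than any new homological input — to be the main, if mild, obstacle.
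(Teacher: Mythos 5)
Your proposal is correct and follows essentially the same route as the paper: verify condition (CP) for $\W\in\{\mathcal P,\mathcal I,\T\}$, apply Theorems~\ref{main5} and~\ref{main6} to each, identify the subfactor categories, and observe that $\thick\mathcal P=\thick\T=\thick\mathcal I$ so the three localizations coincide. The paper's one-line proof cites Proposition~\ref{main3} and its dual to supply the cotorsion pairs, whereas you invoke Lemma~\ref{lem:eq}; your choice is in fact slightly better, since Proposition~\ref{main3}(a)$\Rightarrow$(b) requires contravariant finiteness of $\T$ which is not given a priori, and Lemma~\ref{lem:eq} packages exactly the Gorenstein $+$ (AF) hypotheses needed to bypass that. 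Your closing paragraph on identifying the three induced triangulated structures on the common localization is a genuine subtlety that the paper glosses over entirely; your sketch (the cone of $X\to 0$ pins down $\underline{[1]}$ up to isomorphism, and distinguished triangles come from $\EE$-triangles) is morally right but informal — in particular "cone of $X\to 0$" only determines the shift up to non-natural isomorphism, so a fully rigorous version would need to produce the natural transformation explicitly or else content itself with the weaker claim that the three structures are triangle equivalent rather than equal. Since the paper asserts only "triangle equivalences" and does not address this point at all, this is not a defect in your argument relative to the source.
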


Let $A$ be a Gorenstein algebra and $T$ be an $n$-tilting $A$-module. According to \cite[Lemma 1.5]{H}, we have $\mathrm{D}^b(A)/\mathrm{K}^b(\add T)=\mathrm{D}^b(A)/\mathrm{K}^b(\proj A)$. By \cite[Theorem 4.6]{H}, there is a triangle equivalence $$\mathrm{D}^b(A)/\mathrm{K}^b(\proj A)\simeq {^{\bot}}A/[A].$$
Now applying the corollary above, we can get the following corollary, where the second equivalence appeared in \cite[Theorem 2.5]{CZ}.

\begin{cor}
Let $A$ be a Gorenstein algebra and $T$ be an $n$-tilting $A$-module. We have the following triangle equivalences:
$$\mod A/(\thick T)\simeq (T^{\bot}\cap {^{\bot}}T)/[T] \simeq \mathrm{D}^b(A)/\mathrm{K}^b(\add T).$$
\end{cor}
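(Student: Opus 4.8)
The plan is to obtain this corollary as a direct specialization of the corollary above, glued to the two triangle equivalences for $\mathrm{D}^b(A)$ recalled just before the statement. First I would verify that $\B:=\mod A$ is admissible for the theory developed here: for a finite-dimensional algebra $A$, $\mod A$ is skeletally small, Krull--Schmidt, Hom-finite and $k$-linear, it is abelian (hence extriangulated, with $\EE^i=\Ext^i_A$), it has enough projectives $\mathcal P=\proj A=\add A$ and enough injectives $\mathcal I=\inj A$, and, being abelian, it automatically satisfies both \textbf{(WIC)} and condition $\operatorname{(AF)}$. Next, since $A$ is a finite-dimensional Gorenstein algebra its self-injective dimension is finite on both sides; translated to $\mod A$ this says exactly $\id\mathcal P<\infty$ and $\pd\mathcal I<\infty$, so $\mod A$ is a Gorenstein extriangulated category in the sense of Definition~\ref{Gorenstein}.

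Now let $T$ be an $n$-tilting $A$-module. By the remark following Definition~\ref{deftil}, $\T:=\add T$ is an $n$-tilting subcategory of $\mod A$; moreover $\thick\T=\thick T$ (recall $\thick T$ is shorthand for $\thick(\add T)$), and $\T^{\bot}=T^{\bot}$, ${}^{\bot}\T={}^{\bot}T$, $[\T]=[T]$, so one may freely pass between $\T$ and $T$. Since $\mod A$ is Gorenstein and satisfies $\operatorname{(AF)}$, Proposition~\ref{prop1} (equivalently Lemma~\ref{lem:eq}) applies, so $\T$ — and likewise $\mathcal P$, $\mathcal I$, which are tilting-cotilting by Remark~\ref{lem:Gor} — satisfies condition $\operatorname{(CP)}$; hence the corollary above is available for $\B=\mod A$ and this $\T$. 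It gives the chain of triangle equivalences
$${}^{\bot}\mathcal P/[\mathcal P]\simeq\mod A/(\thick\mathcal P)=\mod A/(\thick T)=\mod A/(\thick\mathcal I)\simeq\mathcal I^{\bot}/[\mathcal I]\simeq (T^{\bot}\cap{}^{\bot}T)/[T].$$
In particular $\mod A/(\thick T)\simeq (T^{\bot}\cap{}^{\bot}T)/[T]$, the first claimed equivalence; and since $\mathcal P=\add A$ we get ${}^{\bot}\mathcal P/[\mathcal P]={}^{\bot}A/[A]$, whence also $\mod A/(\thick T)\simeq{}^{\bot}A/[A]$.

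Finally I would splice in the two equivalences quoted before the statement: by \cite[Lemma~1.5]{H} one has $\mathrm{D}^b(A)/\mathrm{K}^b(\add T)=\mathrm{D}^b(A)/\mathrm{K}^b(\proj A)$, and by \cite[Theorem~4.6]{H} there is a triangle equivalence $\mathrm{D}^b(A)/\mathrm{K}^b(\proj A)\simeq{}^{\bot}A/[A]$; combining these with $\mod A/(\thick T)\simeq{}^{\bot}A/[A]$ obtained above yields $\mod A/(\thick T)\simeq (T^{\bot}\cap{}^{\bot}T)/[T]\simeq\mathrm{D}^b(A)/\mathrm{K}^b(\add T)$, as required, the middle term being the category of \cite[Theorem~2.5]{CZ}. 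There is no genuine obstacle here — the argument is book-keeping on top of the corollary above — the single point deserving care being the dictionary "$A$ Gorenstein as an algebra $\Leftrightarrow$ $\mod A$ Gorenstein as an extriangulated category", which rests on the classical fact that the left and right self-injective dimensions of a finite-dimensional algebra are simultaneously finite.
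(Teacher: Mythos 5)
Your proof is correct and follows essentially the same route as the paper: specialize the preceding corollary to $\B=\mod A$ with $\T=\add T$ (after checking the hypotheses, which the paper leaves implicit), and then splice in Happel's two equivalences $\mathrm{D}^b(A)/\mathrm{K}^b(\add T)=\mathrm{D}^b(A)/\mathrm{K}^b(\proj A)\simeq{}^{\bot}A/[A]$. The only difference is that you spell out the hypothesis verification (abelian $\Rightarrow$ (WIC) and (AF), Gorenstein algebra $\Rightarrow$ Gorenstein extriangulated category) that the paper takes for granted.
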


\end{document}